\theoremstyle{plain}
\newtheorem{thm}{Theorem}[section]
\newtheorem{lem}[thm]{Lemma}
\newtheorem{prop}[thm]{Proposition}
\newtheorem{cor}[thm]{Corollary}
\theoremstyle{definition}
\newtheorem{defn}[thm]{Definition}
\newtheorem{exmp}[thm]{Example}
\theoremstyle{remark}
\newtheorem*{rem}{Remark}
\newcommand{\imagekernel}{\texttt{Image\_Kernel}}
\newcommand{\bF}{\mathbb{F}}
\newcommand{\bN}{\mathbb{N}}
\newcommand{\bR}{\mathbb{R}}
\newcommand{\bX}{\mathbb{X}}
\newcommand{\bY}{\mathbb{Y}}
\newcommand{\bZ}{\mathbb{Z}}
\newcommand{\cA}{\mathcal{A}}
\newcommand{\cB}{\mathcal{B}}
\newcommand{\cC}{\mathcal{C}}
\newcommand{\cD}{\mathcal{D}}
\newcommand{\cE}{\mathcal{E}}
\newcommand{\cF}{\mathcal{F}}
\newcommand{\cH}{\mathcal{H}}
\newcommand{\cI}{\mathcal{I}}
\newcommand{\cJ}{\mathcal{J}}
\newcommand{\cL}{\mathcal{L}}
\newcommand{\cP}{\mathcal{P}}
\newcommand{\cQ}{\mathcal{Q}}
\newcommand{\cR}{\mathcal{R}}
\newcommand{\cU}{\mathcal{U}}
\newcommand{\cV}{\mathcal{V}}
\newcommand{\cW}{\mathcal{W}}
\newcommand{\bbR}{{\rm \bf R}}
\newcommand{\Ho}{{\rm H}}
\newcommand{\PH}{{\rm PH}}
\newcommand{\VR}{{\rm VR}}
\newcommand{\Tot}{{\rm Tot}}
\newcommand{\SpSq}{{\rm \bf SpSq}}
\newcommand{\PSpSq}{{\rm \bf PSpSq}}
\newcommand{\veps}{\varepsilon}
\newcommand{\PMod}{\rm \bf PMod}
\newcommand{\vect}{{\rm \bf vect}}
\newcommand{\Id}{{\rm Id}}
\newcommand{\Ker}{{\rm Ker}}
\newcommand{\Img}{{\rm Im}}
\newcommand{\Coker}{{\rm Coker}}
\newcommand{\dimn}{{\rm dim}}
\newcommand{\dimnp}[1]{{\rm dim}(#1)}
\newcommand{\cech}{{\rm \v{C}ech}}
\newcommand{\cell}{{\rm cell}}
\newcommand{\colim}{{\rm \bf colim \,}}
\newcommand{\hocolim}{{\rm \bf hocolim \,}}
\newcommand{\Bd}{{\rm \bf Bd \,}}
\newcommand{\CWcpx}{\mbox{\bf CW-cpx}}
\newcommand{\FCWcpx}{\mbox{\bf FCW-cpx}}
\newcommand{\RCWcpx}{\mbox{\bf RCW-cpx}}
\newcommand{\RDiag}[1]{{\rm \bf RDiag}(#1)}
\newcommand{\RRDiag}[1]{{\rm \bf RRDiag}(#1)}
\newcommand{\FRDiag}[1]{{\rm \bf FRDiag}(#1)}
\newcommand{\FFDiag}[1]{{\rm \bf FFDiag}(#1)}
\DeclareMathOperator{\img}{Im}
\newcommand{\MN}{{\rm MNerv}}
\title{Interleaving Mayer-Vietoris spectral sequences}
\author{\' Alvaro Torras} \thanks{\' Alvaro Torras research has been supported by an EPSRC grant reference EP/N509449/1 number 1941653.}
\email{TorrasCasasA@cardiff.ac.uk}
\author{Ulrich Pennig}
\email{pennigu@cardiff.ac.uk}
\keywords{Spectral Sequences, Mayer-Vietoris, Geometric Realization, Acyclic Carriers, Interleaving Distance}
\subjclass{55T, 18, 55N31}
\begin{document}

\maketitle

\begin{abstract}
We discuss the Mayer-Vietoris spectral sequence as an invariant in the context of persistent homology. In particular, we introduce the notion of $\veps$-acyclic carriers and $\veps$-acyclic equivalences between filtered regular CW-complexes and study stability conditions for the associated spectral sequences. We also look at the Mayer-Vietoris blowup complex and the geometric realization, finding stability properties under compatible noise; as a result we prove a version of an approximate nerve theorem. Adapting work by Serre we find conditions under which $\veps$-interleavings exist between the spectral sequences associated to two different covers.
\end{abstract}

\section{Introduction}

One of the benefits of homology as a topological invariant, over for example the
homotopy groups, is its computability via long exact sequences. The classical
Mayer-Vietoris exact sequence has been used in countless examples to compute
$\Ho_k(X)$ from a decomposition of $X$ into two open subsets $U$ and $V$.
When we generalise this concept to open covers $(U_i)_{i \in I}$ consisting of
more than just two subsets, the relations between the parts $\Ho_k(U_i)$ become
more intricate and are encoded in the Mayer-Vietoris spectral sequence.
These sequences first appeared in work of Leray and later Serre, and they
proved to be one of the most powerful tools in pure algebraic topology.
Applications of spectral sequences in applied algebraic topology, however, are
still a young subject.

In~\citep{TorrasCasas2019} it was proven that the Persistence
Mayer-Vietoris spectral sequence can be used to compute persistent homology.
The starting point is a filtered simplicial complex $X$ together with a
cover by subcomplexes $\cU$. Then, one computes $\PH_i(\cU_\sigma)$ for all
$i\geq 0$ and $\sigma \in N_\cU$, where $N_\cU$ denotes the nerve of the cover $\cU$. The Mayer-Vietoris spectral sequence
starts from these groups and the morphisms induced by inclusions and converges to $\PH_i(X)$.
As pointed out in~\citep{YooGhr2020} the additional insight gained from the cover $\cU$ can be used for example for multiscale feature detection. Similar information was also explored much earlier in~\citep{ZomCar2008} in the form of \emph{localized homology}.

Motivated by these results one might be interested in studying the spectral sequence
$E^*_{p,q}(X,\cU)$ as an invariant in its own right. In this paper we will in particular pursue the following questions:
\begin{itemize}
  \item Given a pair $(X,\cU)$ consisting of a space~$X$ and a cover $\cU$, an object closely related to the Mayer-Vietoris spectral sequence $E^*_{p,q}(X, \cU)$ is the Mayer-Vietoris blowup complex $\Delta_\cU(X)$. Is $\Delta_\cU(X)$ stable? In which way?

  \item Suppose that the data in each covering set $\cU_\sigma$ for $\sigma \in N_\cU$ is modified slightly. If the underlying cover $\cU$ is ignored, then we would not expect $E^*_{p,q}(X, \cU)$ to be stable. Are there natural coherence conditions between changes in the sets $\cU_\sigma$ that imply stability? If so, what do we mean by stability of spectral sequences?

  \item Let $\cU$ and $\cV$ be covers of the same space $X$. Can we compare $E^*_{p,q}(X, \cU)$ and $E^*_{p,q}(X, \cV)$ up to $\veps$-interleavings?
\end{itemize}

To explain why the first question is important and how it is linked to spectral sequences, we note that $E^*_{p,q}(X, \cU)$ \emph{converges} to the \emph{target} persistent homology $\PH_*(\Delta_\cU(X))$ (this is usually denoted by $E^*_{p,q}(X, \cU) \Rightarrow \PH_*(\Delta_\cU(X))$). The blowup complex $\Delta_\cU(X)$ already appeared in the context of topological data analysis in~\citep{LewisMorozov2015} and~\citep{ZomCar2008}. It is homotopy equivalent to a \emph{homotopy colimit}, and therefore enjoys good properties with respect to local homotopy equivalences. For example, if we assume that
$X(\sigma) := X \cap \cU_\sigma$ is contractible for all $\sigma \in N^\cU$,
then we can use~\citep[Prop.~4G.2]{Hatcher2002} to recover
Leray's Nerve Theorem. That is, there are homotopy equivalences
$$
X \simeq \Delta_\cU(X) \simeq \Delta_\cU(*) = N(\cU),
$$
where $*$ denotes the constant \emph{complex of spaces} on $\cU$, see~\citep[App.~4.G]{Hatcher2002}.
The fundamental importance of this result in applied topology
is underlined by the persistent Nerve lemma presented in~\citep{ChazOudo2008}.
It is worth mentioning the Approximate Nerve Theorem~\citep{GovSkr2018} and the
Generalized Nerve Theorem~\citep{Cavanna2019}, which are approximate versions of
the Leray Theorem within the context of persistence. In particular,
in~\citep{GovSkr2018} the spectral sequence $E^*_{p,q}(X,\cU) \Rightarrow \PH_*(X)$
is examined, and it is studied how much it differs from the other spectral
sequence $E^*_{p,q}(*,\cU) \Rightarrow \PH_*(N(\cU))$, by careful inspection of
all pages as well as the extension problem.

Throughout the paper we focus on the category $\RCWcpx$ of
\emph{regularly filtered regular CW complexes} as well as the subcategory $\FCWcpx$
of \emph{filtered regular CW complexes}, see subsection~\ref{sub:FRcpx}. Instead of
restricting our attention to a space $X$ together with a cover $\cU$, we look at
regular diagrams $\cD$ in $\RCWcpx$ over a simplicial complex~$K$. There is a
natural replacement for the Mayer-Vietoris blowup complex in this setting, denoted
by $\Delta_K(\cD)$, as explained in~\citep[App.~4.G.]{Hatcher2002}. This object
also appears in the context of semisimplicial spaces, where it is called the
\emph{geometric realization}~\citep{EbeRan2019}; in fact, it has an associated
spectral sequence~\citep[Sub.~1.4.]{EbeRan2019}. As we explain in
Sec.~\ref{sec:spectral_sequences} there are good reasons as to why it is worth
taking this more general perspective. In particular, we consider the spectral sequence
\[
E^2_{p,q}(\cD)\Rightarrow \PH_{p+q}(\Delta_K\cD)\ .
\]

In order to address the first two questions, we introduce the notion of acyclic
carriers to define \emph{$\veps$-acyclic equivalences}. Using the
Acyclic Carrier Theorem we show the following: Let $X$ and $Y$ be two objects
in $\RCWcpx$. If there exists an $\veps$-acyclic equivalence
$F^\veps:X\rightrightarrows Y$, then $\PH_*(X)$ is $\veps$-interleaved with
$\PH_*(Y)$ (see Lem.~\ref{lem:RCWcpx-veps-interleaving} and Prop.~\ref{prop:eps-acyclic-carriers} for a stronger statement in $\FCWcpx$).
These equivalences provide a very flexible notion that works in different contexts as the
examples~\ref{ex:vietoris-rips-complexes} and~\ref{ex:cubical-complexes} show.
Then we adress the first question in the following way.  Let $\cD$ and $\cL$ be
two diagrams over the same simplicial complex $K$ and assume that for all $\sigma \in K$
there are $\veps$-acyclic equivalences $F^\veps_\sigma:\cD(\sigma)\rightrightarrows \cL(\sigma)$
which satisfy a compatibility condition with respect to composition in the poset category
associated to $K$, see proposition
(see Prop.~\ref{thm:geometric-realization} for details), then there is an $\veps$-acyclic
equivalence $F^\veps:\Delta_K(\cD)\rightrightarrows \Delta_K(\cL)$. This result implies
stability in the targets of convergence of the spectral sequences. We use this result to
show a `Strong Approximate Multinerve Theorem' in
Thm.~\ref{thm:strong-multinerve}.
Later in section~\ref{sec:ss-interleavings}, we introduce $(\veps, n)$-interleavings,
which are given by spectral sequence morphisms that start at some page $n$ together with
a shift by a persistence parameter $\veps>0$. Assuming the same conditions as in the Geometric Realization
case~(Prop.~\ref{thm:geometric-realization})
we can obtain a $(\veps, 1)$-interleaving
between $E_{p,q}^*(\cD)$ and $E_{p,q}^*(\cL)$. This result appears in Theorem~\ref{thm:stability-ss} and
a specialized strong statement for covers of spaces in $\FCWcpx$ is given in Proposition~\ref{prop:stability-ss}.

As for the third question about the comparison of the spectral sequences
associated to two covers $\cU$ and $\cV$ of a space $X$, we rely on work of Serre
from the fifties, in which he studied the relation between the
\cech~cohomology of two different covers~\citep{Serre1955}; here we adapt this
work in the context of cosheaves and cosheaf homology. Given a cosheaf $\cF$
of abelian groups on $X$ and assume that there is a refinement $\cV \prec \cU$.
Serre showed that the refinement morphism induced on \v{C}ech~homology
$\rho^{\cU\cV}:\check{\cH}_*(\cV, \cF) \rightarrow \check{\cH}_*(\cU, \cF)$ is
independent of the particular choice of morphism in the cochains.
In~\citep{Serre1955} it was also shown that $\rho^{\cU\cV}$ can be factored
through a construction that uses a double complex associated
to both covers $C_{p,q}(\cU, \cV;\cF)$, see \citep[Prop.~4, Sec.~29]{Serre1955}.
This construction introduces two double complex spectral sequences
${^{\rm I}E}_{p,q}^*(\cU, \cV;\cF)$ and ${^{\rm II}E}_{p,q}^*(\cU, \cV;\cF)$, both
of which converge to $\check{\cH}_*(\cU\cap \cV; \cF) \simeq \check{\cH}_*(\cV;\cF)$.
Here one might study conditions on ${^{\rm II}E}_{p,q}^*(\cU, \cV;\cF)$ to find
when an inverse of $\rho^{\cU\cV}$ exists.
As an application, Serre obtained an analogous result to the Leray
Theorem in the context of sheaves \citep[Thm.~1 in \S 29]{Serre1955}.

We start our analysis of the third question in
Sec.~\ref{sec:interleavings-different-covers}. In case $\cV \prec \cU$ there
is a unique morphism induced by the refinement map on the second page
\[
\rho^{\cU\cV}:E^*_{p,q}(X,\cV)\rightarrow E^*_{p,q}(X,\cU)\ .
\]
On the other hand,
Thm.~\ref{thm:inverse-refinement} tells us under what conditions there exists a
$\veps$-shifted morphism $\psi:E^*_{p,q}(X,\cU) \rightarrow
 E^*_{p,q}(X,\cV)[\veps]$ so that $\rho^{\cU\cV}$ and $\psi$ form an
$(\veps,2)$-interleaving between $E^*_{p,q}(X,\cU)$ and $E^*_{p,q}(X,\cV)$.
Finally, in Prop.~\ref{prop:local-checks} we give a means of obtaining
an $(\veps,2)$-interleaving between $E^*_{p,q}(X,\cU)$ and $E^*_{p,q}(X,\cV)$
through the computation of local Mayer-Vietoris spectral sequences
$E^*_{p,q}(\cU_\sigma, \cV_{|\cU_\sigma})$ for all $\sigma \in N_\cU$. Since
the open regions $\cU_\sigma$ are assumed to be `small' in comparison to
$X$, this gives a means of using local calculations to deduce the interleaving.
As Corollary~\ref{cor:stability-covers} we present the case when $\cV$ does
not need to refine $\cU$.

\section*{Acknowledgements}

We would like to thank P.~Skraba and D.~Govc for fruitful discussions during
spring 2020 that lead up to important ideas of this manuscript. In particular
P.~Skraba pointed out to us the 1955 notes~\citep{Serre1955} from
J.~P.~Serre, which have been key for the results from section~\ref{sec:interleavings-different-covers}.

\section{Background}

\subsection{Regular CW-complexes with filtrations}
\label{sub:FRcpx}

Recall the definition of CW-complex from~\citep[Chapter 0]{Hatcher2002}. In contrast to
the usual treatment of CW-complexes, but in line with the structure we are dealing with in TDA, we will consider the cell decomposition as part of the data of our CW-complexes.
For a CW-complex $X$, if $c$ is an open cell in $X$ we will follow the notation
from~\cite{CookeFinney1967} and denote this by $c \in X$. We will denote by
$X^n$ the set of $n$-dimensional cells from $X$ and we will denote by
$X^{\leq n}$ the $n$-skeleton from $X$. Recall that $X$ has a natural filtration
given by its skeleta $X^{0} \subseteq  X^{\leq 1} \subseteq \cdots
\subseteq X^{\leq N} \subseteq \cdots$, and a
\emph{cellular morphism} $f:X \to Y$ respects this filtration, in the sense
that it restricts to morphisms $f^m: X^{\leq m}\to Y^{\leq m}$ for all $m \geq 0$.
We will work with regular CW-complexes, which are CW-complexes where the
attaching maps are homeomorphisms.  Given a pair of cells $a\in X^n$ and
$b \in X^{n-1}$, we denote by $[b : a]$ the degree of attaching map $\partial a \to \overline{b}/\partial b$.
A cellular morphism $f:X \to Y$ will be called a \emph{regular morphism}
whenever the closure  $\overline{f(a)}$ is a subcomplex of $Y$ for all cells
$a \in X$. For such a morphism and and a pair $a \in X^n$ and $b \in Y^n$, we
will denote by $[b:f(a)]$ the degree of the morphism $f$ restricted to the open
cell~$a$ and mapping into the open cell $b$.
We will write $\CWcpx$ to denote the category of finite regular CW-complexes and
regular morphisms.
Denote by $\bbR$ the ordered category $(\bR, \leq)$
of real numbers. We will focus on functors $X: \bbR \rightarrow \CWcpx$ which we
will call \emph{regularly filtered CW complexes}, and we denote their
category by $\RCWcpx$. We say that an object $X \in \RCWcpx$ is
\emph{tame}, whenever $X$ is constant along a finite
number of right open intervals decomposing the poset $\bbR$. For $X \in \RCWcpx$,
we will write $X_r$ for the regular CW-complex $X(r)$ for all $r \in \bbR$.
On the other hand we will write $X(r\leq s)$ to denote the morphism
$X_r \to X_s$ for all $r\leq s$ in $\bbR$. If the morphisms
$X(r\leq s):X_r \to X_s$ are injections preserving the cellular structure for
all $r\leq s$ in $\bbR$, then we call $X$ a \emph{filtered CW-complex},
denoting by $\FCWcpx$ the corresponding subcategory of $\RCWcpx$.
 Notice that objects
in \FCWcpx~can be seen as a pair $(\colim X_*, f)$ where $\colim X_*$ is a
regular CW-complex and $f:\colim X_*\rightarrow \bR$ is a filtration function.
Given $X \in \RCWcpx$, we define the persistent homology in degree $n$ as the
functor $\PH_n(X):\bbR \rightarrow \vect$ given by computing cellular homology
$\PH_n(X)_r = \Ho_n^\cell(X_r)$ for all $r \in \bbR$. We will always use homology with field coefficients. Notice that by finiteness
of $X_r$, the vector space
$\PH_n(X)_r$ is finite dimensional for all $r\in \bbR$. If in addition $X$ is tame,
 $\PH_n(X)$ only changes at a finite number
of points $r\in \bbR$. We call the category of functors
$\bbR\rightarrow \vect$ \emph{persistence modules} and
denote it by $\PMod$.
Given $a \in (0, \infty)$ and $X \in \RCWcpx$, we will write $X[a]$ for the
element of $\RCWcpx$ such
that $X[a]_r = X_{r+a}$ for all $r \in \bbR$.
We use $\Sigma^\veps$ to denote the $\veps$-\emph{shift functor}
$\Sigma^\veps: \RCWcpx \rightarrow {\rm Hom}(\RCWcpx)$ which sends
$X \in \RCWcpx$ to $\Sigma^\veps X: X \rightarrow X[\veps]$, where
$\veps \geq 0$. Also, for any morphism
of filtered CW-complexes $f:A\rightarrow B$, one can check that
$f [\veps]\circ \Sigma^\veps A = \Sigma^\veps B \circ f$, where
we use $f [\veps] : A[\veps]\rightarrow B[\veps]$.
Similarly, there are shift functors for persistence modules
$\Sigma^\veps : \PMod \rightarrow {\rm Hom}(\PMod)$ for $\veps \geq 0$.

\begin{rem}
Notice that the standard algorithm for the computation of persistent homology can not be applied to objects in $\RCWcpx$.
However, if $X$ is tame and one successfully computes the coefficients
for the morphisms $C^\cell_*(X_r)\to C^\cell_*(X_s)$ for all $r \leq s$ in $\bbR$,
then one can use~\imagekernel~from~\citep{TorrasCasas2019} to obtain a
\emph{barcode basis} for the filtered cellular complex $C^\cell_*(X)$. Then we
compute homology of the persistence morphisms given by the differentials
$d_n:C^\cell_n(X)\rightarrow C^\cell_{n-1}(X)$ by the use of~\imagekernel.
See~\citep{TorrasCasas2019} for an explanation.
\end{rem}

\subsection{Acyclic carriers}
Consider two objects $\Phi$ and $\Gamma$ from $\CWcpx$ with
their respective pairs of chains and differentials
$\big(C^\cell_*(\Phi), \delta^\Phi\big)$ and
$\big(C^\cell_*(\Gamma), \delta^\Gamma\big)$.
Let $\langle \cdot, \cdot \rangle_\Phi$ and
$\langle \cdot, \cdot \rangle_\Gamma$
denote the inner products on $C^\cell_*(\Phi)$ and $C^\cell_*(\Gamma)$,
where the cells form an orthonormal basis.
We define a relation $\prec$ on $\Phi$ by setting
$\tau \prec \sigma$ if
$\langle \tau, \delta^\Phi(\sigma) \rangle_\Phi \neq 0$ and by taking the transitive closure. We will denote by
$\preceq$ the partial order generated by $\prec$. Thus,
$\tau \prec \sigma$ will not necessarily imply $\dimnp{\tau} + 1 = \dimnp{\sigma}$.
Also, notice that $\langle \tau, \delta^\Phi(\sigma) \rangle_\Phi=[\tau:\sigma]$,
see the cellular boundary formula from~\citep[Sec. 2.2]{Hatcher2002}.
\begin{defn}
A \emph{carrier} $F:\Phi \rightrightarrows \Gamma$ is a map from the set of
cells of $\Phi$ to subcomplexes of $\Gamma$ that is semicontinuous in the sense
that for any pair $\tau \prec \sigma$ in $\Phi$, $F(\tau)\subseteq F(\sigma)$.
A carrier $F:\Phi \rightrightarrows \Gamma$ is called \emph{acyclic}, if for
every $\sigma \in \Phi$, $F(\sigma)$ is a nonempty acyclic subcomplex of
$\Gamma$.
\end{defn}

Given a chain map $w_p:C_p^\cell{}(\Phi) \rightarrow C_{p+r}^\cell(\Gamma)$ of degree $r$,
we say that it is carried by $F$ if for all cells $\sigma \in \Phi_p$
$$
\{ \gamma \in \Gamma_{p+r} \mid \langle w_p(\sigma),\gamma \rangle_\Gamma \neq 0\}\subseteq F(\sigma)\ ,
$$
where we followed the notation from~\citep{Nanda2012}.

The next statement is an application of~\citep[Thm.~13.4]{Munk1984}.
In Proposition~\ref{prop:eps-acyclic-carriers} we will prove
a version of this statement that will apply to filtered CW-complexes.
\begin{thm}
\label{thm:acyclic-carrier}
Let $F:\Phi\rightrightarrows \Gamma$ be an acyclic carrier between CW-complexes
$\Phi$ and~$\Gamma$. Then we have that
\begin{itemize}
\item {\bf existence:} there is a chain map carried by $F$,
\item {\bf equivalence:} if $F$ carries two chain maps $\phi$ and $\varphi$, then $F$ carries
a chain homotopy between $\phi$ and $\varphi$.
\end{itemize}
\end{thm}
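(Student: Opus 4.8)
The plan is to run the classical inductive proof of the simplicial Acyclic Carrier Theorem \citep[Thm.~13.4]{Munk1984}, replacing the face poset of a simplicial complex by the partial order $\preceq$ on the cells of $\Phi$. Two structural facts about regular CW-complexes make this go through: first, $\langle\tau,\delta^\Phi(\sigma)\rangle_\Phi\neq 0$ forces $\dimnp{\tau}=\dimnp{\sigma}-1$, so a generating relation $\tau\prec\sigma$ raises dimension by exactly one and any $\tau\preceq\sigma$ has $\dimnp{\tau}\leq\dimnp{\sigma}$; second, each subcomplex $F(\sigma)$ carries a cellular chain complex $\big(C^\cell_*(F(\sigma)),\delta^\Gamma\big)$ which is a genuine subcomplex of $\big(C^\cell_*(\Gamma),\delta^\Gamma\big)$. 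Throughout I would use reduced cellular homology with field coefficients, so that ``$F(\sigma)$ nonempty and acyclic'' means $\widetilde{\Ho}_*^{\cell}(F(\sigma))=0$; in particular each $F(\sigma)$ is connected, so $\delta^\Gamma\colon C^\cell_1(F(\sigma))\to C^\cell_0(F(\sigma))$ has image equal to the augmentation-zero $0$-chains.

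\emph{Existence} (of a degree-zero chain map carried by $F$). I would build $w$ one dimension at a time. For each $0$-cell $v\in\Phi_0$ choose $w(v)$ to be a single $0$-cell of the nonempty complex $F(v)$, so that all the $w(v)$ have augmentation $1$. Suppose $w$ is defined on $C^\cell_{<p}(\Phi)$, commutes with the differentials there, and satisfies $w(\tau)\in C^\cell_{\dimnp{\tau}}(F(\tau))$ whenever $\dimnp{\tau}<p$. For $\sigma\in\Phi_p$ the chain $w(\delta^\Phi\sigma)=\sum_\tau[\tau:\sigma]\,w(\tau)$ lies in $C^\cell_{p-1}(F(\sigma))$, since every $\tau$ with $[\tau:\sigma]\neq 0$ satisfies $\tau\prec\sigma$ and hence $F(\tau)\subseteq F(\sigma)$. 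If $p\geq 2$ then $\delta^\Gamma w(\delta^\Phi\sigma)=w(\delta^\Phi\delta^\Phi\sigma)=0$; if $p=1$ the augmentation of $w(\delta^\Phi\sigma)$ equals $\sum_\tau[\tau:\sigma]$, which is the augmentation of $\delta^\Phi\sigma$ and therefore $0$. In either case $w(\delta^\Phi\sigma)$ is a boundary in $F(\sigma)$ by acyclicity, and I pick $w(\sigma)\in C^\cell_p(F(\sigma))$ with $\delta^\Gamma w(\sigma)=w(\delta^\Phi\sigma)$. Extending linearly over $\Phi_p$ finishes the step, and since $\Phi$ is finite the induction terminates with a chain map carried by $F$.

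\emph{Equivalence.} Given chain maps $\phi,\varphi$ carried by $F$ — which, as is implicit in the statement, preserve the augmentation, the claim being false on $\Ho_0$ otherwise — I would construct a degree-one map $D$ with $D(\sigma)\in C^\cell_{\dimnp{\sigma}+1}(F(\sigma))$ and $\delta^\Gamma D+D\delta^\Phi=\phi-\varphi$, again by induction on dimension. For $v\in\Phi_0$ the chain $\phi(v)-\varphi(v)\in C^\cell_0(F(v))$ has augmentation $0$, hence bounds in the connected complex $F(v)$, and I set $D(v)$ accordingly. With $D$ defined below dimension $p$, put $z_\sigma=\phi(\sigma)-\varphi(\sigma)-D(\delta^\Phi\sigma)$ for $\sigma\in\Phi_p$; this lies in $C^\cell_p(F(\sigma))$ because $\phi,\varphi$ are carried by $F$ and $D(\tau)\in C^\cell_p(F(\tau))\subseteq C^\cell_p(F(\sigma))$ for each $\tau\prec\sigma$, and a short computation using the inductive hypothesis $\delta^\Gamma D\delta^\Phi\sigma=(\phi-\varphi)\delta^\Phi\sigma$ gives $\delta^\Gamma z_\sigma=0$. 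Acyclicity of $F(\sigma)$ then produces $D(\sigma)\in C^\cell_{p+1}(F(\sigma))$ with $\delta^\Gamma D(\sigma)=z_\sigma$, i.e.\ $(\delta^\Gamma D+D\delta^\Phi)(\sigma)=\phi(\sigma)-\varphi(\sigma)$; extending linearly over $\Phi_p$ completes the induction.

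\emph{Main obstacle.} The recursion is mechanical once the setup is in place; the two points that genuinely need attention — and that keep this from being a verbatim quotation of \citep{Munk1984} — are (i) checking that the relation $\prec$ together with the semicontinuity of $F$ really forces $w(\delta^\Phi\sigma)$ and $z_\sigma$ into $C^\cell_*(F(\sigma))$, which is precisely where regularity of the CW-complexes is used so that the $F(\sigma)$ are honest subcomplexes with $\big(C^\cell_*(F(\sigma)),\delta^\Gamma\big)\hookrightarrow\big(C^\cell_*(\Gamma),\delta^\Gamma\big)$; and (ii) the augmentation bookkeeping in dimensions $0$ and $1$, which dictates the choice of $w$ on vertices and underlies both the $p=1$ case of the existence argument and the augmentation hypothesis on $\phi,\varphi$ in the equivalence part. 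Everything else is a formal consequence of $\delta^\Gamma\delta^\Gamma=0$ and the vanishing of $\widetilde{\Ho}_*^{\cell}(F(\sigma))$.
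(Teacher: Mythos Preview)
The paper does not give an explicit proof of this theorem; it simply records it as an application of \citep[Thm.~13.4]{Munk1984} and defers the argument to the filtered version in Proposition~\ref{prop:eps-acyclic-carriers}. Your write-up is exactly the classical Munkres induction, correctly transported from the simplicial setting to regular CW-complexes via the face relation $\preceq$, and your attention to the augmentation bookkeeping in dimensions $0$ and $1$ (including the remark that the equivalence part tacitly assumes $\phi,\varphi$ are augmentation-preserving) is well placed and matches how Munkres states the result.

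The one stylistic difference worth noting is in the equivalence half. In the proof of Proposition~\ref{prop:eps-acyclic-carriers} the paper obtains the chain homotopy by viewing it as a chain map $C^\cell_*(X)\otimes\cI\to C^\cell_*(Y)$, extending the product carrier $H(c,i)=F(c)$ over $X\times I$, and then invoking the existence half a second time; you instead build $D$ directly by a separate induction on dimension. Both routes are standard and equivalent. The product-with-$I$ trick avoids rerunning the inductive machinery, while your direct construction makes it clearer exactly where the augmentation hypothesis on $\phi$ and $\varphi$ enters.
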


Given two acyclic carriers $F,G:\Phi\rightrightarrows\Gamma$,
we write $F\subseteq G$ whenever $F(\sigma)\subseteq G(\sigma)$
for all $\sigma \in \Phi$. Given a pair of acyclic
carriers $F:\Phi \rightrightarrows\Gamma$ and $H:\Gamma \rightrightarrows \Psi$,
we also define the composition carrier $H\circ F:\Phi \rightrightarrows \Psi$, where
each $\sigma \in \Phi$ is sent to
$$
H\circ F(\sigma) \coloneqq \bigcup_{\tau \in F(\sigma)} H(\tau)\ .
$$
In particular, notice that if $f$ is carried by $F$ and $g$ is carried by $G$,
then $g\circ f$ is `carried' by $G\circ F$. Note, however, that this
composition does not need to be acyclic!

\begin{exmp}
Consider a regular morphism $f:\Phi \to \Gamma$. We can then define the
(not necessarily acyclic) carrier $F_f:\Phi\rightrightarrows \Gamma$ induced by
$f$ as the assignement sending $\sigma \in \Phi$ to $\overline{f(\sigma)}$. Notice
that by continuity of $f$ we have that for any pair $\tau \prec \sigma$ in $\Phi$,
we have that $\overline{f(\tau)}\subseteq \overline{f(\sigma)}$. Also,
$\overline{f(\sigma)} \neq \emptyset$ since it must contain
at least a point. Given an acyclic carrier $G:\Gamma \rightrightarrows \Psi$,
we will denote by $G(f(\sigma))$ the composition of carriers $G\circ F_f(\sigma)$
for all $\sigma \in \Phi$. This situation will come up very often in this text
and whenever we are looking at the composition $G\circ F_f$ we will assume that
it is acyclic. Note that $F_f$ is acyclic if $f$ is an embedding of the regular CW-complex $\Phi$ as a subcomplex of $\Gamma$.
\end{exmp}

\subsection{Regular diagrams of filtered complexes}
\label{sub:reg-diag-complexes}
We will first recall a few gluing constructions
that one can perform in algebraic topology. For a brief introduction to these see~\citep[App.~4.G]{Hatcher2002}.
They are also relevant in Kozlov's approach~\citep{Kozlov2008}, where
diagrams of spaces over trisps are studied.

Let $K$ be a simplicial complex. We may view $K$ as a category in several ways:
The first construction has the $0$-simplices of $K$ as its objects and a unique
morphism $v_0 \to v_1$  if and only if either $v_0 = v_1$ or there is a chain of
$1$-simplices connecting the two. The composition of morphisms is completely
fixed by uniqueness. If we apply this construction to the barycentric subdivision
$\Bd(K)$ of $K$, we end up with another category defined as follows: The objects
are given by the simplices in $K$ and there is a unique morphism $\tau \to \sigma$
if $\tau$ is adjacent to $\sigma$. In the following we will mean the second
construction, when we refer to $K$ as a category, and we will denote the morphism
$\tau \to \sigma$ by $\tau \prec \sigma$.
The following will be our main object of study.
\begin{defn}
  Let $K$ be a simplicial complex. A contravariant functor $\cD:K\to \CWcpx$ is
  called a \emph{regular diagram of CW-complexes} and its
  category is denoted by $\RDiag{K}$.
  A \emph{regularly filtered regular diagram of CW-complexes}~$\cD$ over $K$ is a
  contravariant functor $\cD:K\rightarrow \RCWcpx$, we will denote this category by $\RRDiag{K}$.
  A morphism $f:\cD \rightarrow \cL$ between a
  pair of diagrams over $K$ consists of natural transformations
  $f:\cD(\sigma) \rightarrow \cL(\sigma)$ for all $\sigma \in K$ such that
  $f \circ \cD(\tau \prec \sigma) = \cL(\tau \prec \sigma) \circ f$ for all
  $\tau \prec \sigma$ in $K$. On the other hand if we restrict
  to contravariant functors $\cD:K\rightarrow \FCWcpx$ we will call $\cD$ a
  \emph{filtered regular diagram of CW-complexes} denoting the corresponding
  category by $\FRDiag{K}$. If for a diagram $\cD \in \FRDiag{K}$ the maps
  $\cD(\tau \prec \sigma)$ are inclusions respecting the cellular structures
  for all $\tau \prec \sigma$
  from $K$, then we call $\cD$ a \emph{fully filtered diagram of CW-complexes}.
  We have embeddings of categories
  \[
  \FFDiag{K} \subset \FRDiag{K} \subset \RRDiag{K}
  \]
  for all simplicial complexes $K$.
\end{defn}

\begin{exmp}
\label{ex:cover-spaces}
Consider a filtered CW-complex $X$ covered by filtered subcomplexes $\cU=\big\{U_i
\big\}_{i\in I}$. We define $X^\cU$ over the nerve $N_\cU$ as
$X^\cU(J) = \bigcap_{j \in J} U_j$. This diagram $X^\cU$ is part of $\FFDiag{N_\cU}$
since all morphisms $X^\cU(I \subseteq J)$ are actually
embeddings of subcomplexes. On the other hand, we can define
the constant diagram $*^\cU$ as $*^\cU(J)_r = *$  if
$X^\cU(J)_r \neq \emptyset$ or $*^\cU(J)_r = \emptyset$ otherwise; for all
$J \in N_\cU$ and all $r \in \bR$. We also have that $*^\cU$ is in $\FFDiag{N_\cU}$.
Then, there is an obvious epimorphism of diagrams
$X^\cU \rightarrow *^\cU$. Continuing with the same example, we can also define
the complex of spaces $\pi_0^\cU$ given by ${\pi_0}^\cU(J) = \pi_0(U_J)$ for all
$J \in N_\cU$. Each $\pi_0(U_J)$ is a disjoint union of points
that are identified with each other as the filtration value increases. Notice that in this case $\pi_0^\cU \in \RRDiag{K}$. Altogether we have a
sequence of epimorphisms $X^\cU \rightarrow {\pi_0}^\cU \rightarrow *^\cU$.
\end{exmp}

\subsection{Geometric Realization}
For an abstract simplicial complex $K$, we denote by $|K|$ its underlying topological space.
Given a $k$-simplex $\sigma \in K$, we write $|\sigma|$ to denote the number of
vertices of $\sigma$. We will use $\dim(\sigma)$ for the dimension of a simplex
$\sigma$, that is $\dim(\sigma)=|\sigma|-1$.
We write by $\Delta^n$ the topological space associated to the standard $n$-simplex.
Given a simplex $\sigma \in K$, we will use the notation
$\Delta^\sigma \coloneqq \Delta^{\dim(\sigma)}$ for simplicity.
Given a pair $\tau \prec \sigma$ in $K$, we have a corresponding inclusion
$\Delta^\tau \hookrightarrow \Delta^\sigma$.
As a special case of a CW-complex, we will denote by $K^n$ and $K^{\leq n}$ the
set of $n$-cells and the $n$-skeleton respectively.

\begin{defn}
\label{defn:Geom-Real}
Let $\cD \in \RDiag{K}$.
The \emph{geometric realization} $\Delta_K \cD$ of $\cD$ is the object in
$\CWcpx$ defined as
$$
\Delta_K \cD = \bigsqcup_{\sigma \in K} \Delta^\sigma \times \cD(\sigma) \big/ \sim \ ,
$$
where, for any pair $\tau \preceq \sigma$ in $K$ the relation identifies a pair of points
\[
(\Delta^\tau \hookrightarrow \Delta^\sigma)(x) \times y \sim x \times \cD(\tau \preceq \sigma)(y)
\]
for each pair of points $x \in \Delta^\tau$ and $y \in \cD(\sigma)$.  This  $\Delta_K \cD$ has a natural filtration given by
$F^p \Delta_K \cD = \bigcup_{\sigma \in K^{\leq p}} \Delta^\sigma \times \cD(\sigma)$
for all $p \geq 0$.
A cell $\tau \times c$ is a face of another cell $\sigma \times a$ if and only if $\tau \preceq \sigma$ and also  $c\in \overline{\cD(\tau \preceq \sigma)(a)}$. If the underlying simplicial complex $K$ is clear from the context, we will write $\Delta \cD$ instead of $\Delta_K \cD$.
\end{defn}

Notice that Definition~\ref{defn:Geom-Real} also applies to diagrams $\cD \in \RRDiag{K}$. We define $\Delta_K\cD$ by setting $(\Delta_K\cD)_r \coloneqq \Delta_K (\cD_r)$ for all $r \in \bbR$. Notice that our gluing conditions are consistent in this case as
\[
\cD(\tau \preceq \sigma) \circ \Sigma^{t}\cD(\sigma)(y) = \Sigma^{t}\cD(\tau) \circ \cD(\tau \preceq \sigma)(y)
\]
for any pair $\tau \preceq \sigma$ from $K$ and all $t>0$ and all points $y \in \cD(\sigma)$.
Altogether we obtain $\Delta_K(\cD) \in \RCWcpx$.
Given a regular morphism $\cF:\cD \rightarrow \cL$ of diagrams in $\RRDiag{K}$, there is an induced morphism on the geometric realization which we denote $\Delta\cF$. Denote by $*^\cD$ the diagram given by
\[
    *^\cD(\sigma)_r =
    \begin{cases}
        * & \text{if } \cD(\sigma)_r \neq \emptyset \\
        \emptyset & \text{else}
    \end{cases}
\]
and note that there is a homotopy equivalence $\Delta (*^\cD)_r\simeq \lvert K^\cD_r \rvert$, where $K^\cD$ is the filtered simplicial complex with the same underlying vertex set as $K$ and $\sigma \in K^\cD_r$ if and only if $\cD(\sigma)_r \neq \emptyset$.
The projection onto the simplex coordinates gives a \emph{base projection}
$p_b:\Delta\cD \rightarrow \Delta (*^\cD) \simeq \lvert K^\cD \rvert$.

\begin{exmp} Let $\cD \in \FRDiag{K}$. We define the \emph{multinerve} of $\cD$ as
$$
\MN(\cD) = \Delta(\pi_0(\cD))\ .
$$
This object was first introduced in~\citep{ColVer2014} in the case of $\pi_0^\cU$ for a space $X$ covered by~$\cU$. In~\citep{ColVer2014} it was defined as a
simplicial poset, a notion that is equivalent to that of a $\Delta$-complex.
There are epimorphisms $\Delta \cD \rightarrow \MN(\cD) \rightarrow \Delta(*^\cD)\simeq |K|$.
\end{exmp}

\begin{rem}
Let $\cD$ be a diagram of CW-complexes over the simplicial complex $K$. We can extend $\cD$ to a diagram $\cD'$ on the barycentric subdivision $\Bd(K)$ by defining
\(
    \cD'(\tau_0 \prec \dots \prec \tau_n) = \cD(\tau_n)
\)
on an $n$-simplex $\tau_0 \prec \tau_1 \prec \dots \prec \tau_n$ in $\Bd(K)$. A non-identity morphism in $\Bd(K)$ that has $\tau_0 \prec \tau_1 \prec \dots \prec \tau_n$ as its codomain must have the same flag with one of the $\tau_k$'s left out as its domain. The diagram $\cD'$ maps such a morphism to the identity in case $k \neq n$ or the morphism $\cD(\tau_{n-1} \prec \tau_{n})$ in case $k = n$. It is clear from the definition of the homotopy colimit via the simplicial replacement that the geometric realization $\Delta(\cD')$ is homotopy equivalent to $\hocolim\cD$. A modified version of the homotopy equivalence $\lvert K \rvert \simeq \lvert \Bd(K) \rvert$ shows that $\Delta(\cD) \simeq \Delta(\cD')$. Hence, we could have worked with homotopy colimits all throughout, but we chose to work with the geometric realization since it is technically easier to handle and because in some instances it is the Mayer-Vietoris blowup complex, which has already appeared before in Topological Data Analysis~\citep{ZomCar2008}.
\end{rem}

\begin{prop}
\label{prop:compatible-homotopies}
Let $\cF:\cD \rightarrow \cL$ be a morphism of diagrams in $\RDiag{K}$. If
$\cF(\sigma)$ is a homotopy equivalence for all $\sigma \in K$, then
$\Delta\cF:\Delta\cD \rightarrow \Delta \cL$ is a homotopy equivalence.
\end{prop}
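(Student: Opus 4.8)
The plan is to compare the two geometric realizations one skeleton of $K$ at a time. Recall from Definition~\ref{defn:Geom-Real} the filtration $F^p\Delta\cD=\bigcup_{\sigma\in K^{\leq p}}\Delta^\sigma\times\cD(\sigma)$; since $\cF$ is a morphism of diagrams, $\Delta\cF$ restricts to maps $F^p\Delta\cF\colon F^p\Delta\cD\to F^p\Delta\cL$ for every $p\geq 0$. The first step is to record the standard pushout presentation of $F^p$ over $F^{p-1}$: writing $A_p=\bigsqcup_{\sigma\in K^p}\partial\Delta^\sigma\times\cD(\sigma)$ and $B_p=\bigsqcup_{\sigma\in K^p}\Delta^\sigma\times\cD(\sigma)$, one has
\[
F^p\Delta\cD \;=\; F^{p-1}\Delta\cD\;\cup_{A_p}\;B_p,
\]
where the attaching map $A_p\to F^{p-1}\Delta\cD$ sends, on the summand $\Delta^\tau\times\cD(\sigma)\subseteq\partial\Delta^\sigma\times\cD(\sigma)$ indexed by a codimension-one face $\tau\prec\sigma$, a point $(x,y)$ to the class of $x\times\cD(\tau\prec\sigma)(y)$; this is well defined by functoriality of $\cD$. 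Because $\partial\Delta^\sigma\hookrightarrow\Delta^\sigma$ is a cofibration, so are $A_p\hookrightarrow B_p$ and hence (as a pushout of the former) $F^{p-1}\Delta\cD\hookrightarrow F^p\Delta\cD$, and likewise for $\cL$.

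The second step is an induction on $p$. The base case $p=0$ holds since, up to the evident homeomorphism, $F^0\Delta\cF=\bigsqcup_{v\in K^0}\cF(v)$ is a disjoint union of homotopy equivalences. For the inductive step, assume $F^{p-1}\Delta\cF$ is a homotopy equivalence. Taking products with a fixed CW-complex and forming disjoint unions both preserve homotopy equivalences, so the maps $A_p\to A_p'$ and $B_p\to B_p'$ induced by the $\cF(\sigma)$ (products of the identities of $\partial\Delta^\sigma$, resp.\ $\Delta^\sigma$, with $\cF(\sigma)$) are homotopy equivalences, where a prime denotes the corresponding object built from $\cL$. The identity $\cF(\tau)\circ\cD(\tau\prec\sigma)=\cL(\tau\prec\sigma)\circ\cF(\sigma)$, which holds because $\cF$ is a morphism of diagrams, makes $\Delta\cF$ fit into a \emph{strictly} commuting ladder between the two pushout squares above. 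The gluing lemma for homotopy equivalences along cofibrations (a standard fact; see e.g.\ Brown, \emph{Topology and Groupoids}, Section~7.5, or tom Dieck, \emph{Algebraic Topology}, Chapter~5) then gives that $F^p\Delta\cF$ is a homotopy equivalence.

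To conclude, note that $K$ is finite (indeed $\Delta\cD\in\CWcpx$), so the induction terminates at $p=\dim K$ and $\Delta\cF=F^{\dim K}\Delta\cF$ is the desired homotopy equivalence. (Were $K$ allowed to be infinite, one would instead pass to the colimit $\Delta\cD=\colim_p F^p\Delta\cD$ along the cofibrations $F^{p-1}\hookrightarrow F^p$ and conclude by Whitehead's theorem, using that a weak equivalence is preserved under such sequential colimits.)

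The step I expect to be the main obstacle is the first one: pinning down the pushout presentation of $F^p\Delta\cD$ together with its attaching map in terms of the structure maps $\cD(\tau\prec\sigma)$ — in particular because these need not be injective, so $A_p\to F^{p-1}\Delta\cD$ may fail to be an inclusion — and then checking that $\Delta\cF$ respects this presentation on the nose, so that the gluing lemma genuinely applies. Once that bookkeeping is in place, everything else is the routine inductive gluing argument. Alternatively, one could appeal to the remark preceding this proposition, namely that $\Delta\cD\simeq\Delta(\cD')\simeq\hocolim\cD$ naturally in $\cD$, together with the fact that homotopy colimits take objectwise homotopy equivalences to homotopy equivalences; but the usual proof of that fact is precisely the skeletal induction above, so little is gained.
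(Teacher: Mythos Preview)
Your proof is correct. The paper does not supply its own argument for this proposition; it simply remarks that $\Delta\cD$ can be viewed as a homotopy colimit (which is homotopy invariant) and refers the reader to \cite[Prop.~4G.1]{Hatcher2002} for a proof in the general setting of diagrams of spaces. Your skeletal induction via the gluing lemma along cofibrations is exactly Hatcher's argument there, and you already flag the homotopy-colimit route as an alternative at the end---so your write-up is fully in line with what the paper invokes, just made explicit.
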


One way to see this is to view $\Delta \cD$ as a homotopy colimit (see last remark), which is a homotopy invariant functor on diagrams. Also, a proof of this result in the more general context of diagrams of spaces can be found in~\citep[Prop.~4G.1]{Hatcher2002}.

\begin{exmp}
Let $X \in \CWcpx$ covered by $\cU$ and recall the diagram $X^\cU$ from Example~\ref{ex:cover-spaces}. In this case $\Delta(X^\cU)$ is the Mayer-Vietoris blowup complex associated to the pair $(X,\cU)$ and it can be described as a subspace of the product $X \times |N_\cU|$. This leads to the \emph{fiber projection} $p_f:\Delta(X^\cU)\to X$ and to the \emph{base projection} $p_b:\Delta(X^\cU)\to |N_\cU|$.
As shown in~\citep[Prop.~4G.2]{Hatcher2002}, $p_f$ is a homotopy equivalence $\Delta(X^\cU)\simeq X$. If each $X^\cU(\sigma)$ is contractible for all $\sigma \in N_\cU$, then $p_b$ is also a homotopy equivalence by Proposition~\ref{prop:compatible-homotopies}.
\end{exmp}

Proposition~\ref{prop:compatible-homotopies} is of fundamental importance in applied topology, for example in the persistence nerve lemma from~\citep{ChazOudo2008}. An interesting direction of research would be to use this result to define compatible \emph{collapses}, such as in Discrete Morse Theory~(see \citep{Nanda2012} and~\citep{Bauer2011}) and end up with a diagram of regular CW-complexes. This motivates the study of spectral sequences associated to such diagrams. We will see further reasons in Section~\ref{sec:spectral_sequences}.
On the other hand,
given the importance of Proposition~\ref{prop:compatible-homotopies}, we would like to adapt it to an approximate version in the context of diagrams in $\RRDiag{K}$. Instead of studying
homotopy equivalences, we will consider equivalences induced by acyclic carriers. This will be done in Section~\ref{sec:intBlowCpx}.

\subsection{Spectral Sequences of Bounded Filtrations}
Let $A_*$ be a graded module with differentials $d_n:A_n \to A_{n-1}$ for all $n\geq 1$, and such that $A_m=0$ for all $m<0$. Assume that there is a filtration
$0 = F^{-1} A_* \subseteq F^0 A_* \subseteq F^1 A_* \subseteq \cdots \subseteq F^N A_* = A_*$ of $A_*$ that is preserved by the differentials $d_*$ in the sense that $d_n (F^p A) \subseteq F^p A$ for all $p \geq 0$. We say that $A_*$ is a \emph{filtered differential graded module} and denote this by the triple $(A, d, F)$. Then there is a spectral sequence
\[
E^1_{p,q} = \Ho_q\big( F^p A_* \big/ F^{p-1}A_*\big) \Rightarrow \Ho_{p+q}\big( A_* \big)
\]
for all $p,q \geq 0$, see~\citep[Thm. 2.6]{McCleary2001}.
A morphism of spectral sequences is a sequence of bigraded morphisms $f^r :E^r_{p,q} \to \overline{E}^r_{p,q}$ that commute with the spectral sequence differentials, ie.\ $d_r \circ f^r = d_r \circ f^r$ for all $r \geq 0$. Apart from that, these morphisms satisfy
$f^{r+1} = H(f^r)$ for all $r \geq 0$.

Suppose that $(\overline{A}_*, \overline{d}, \overline{F})$ is another filtered differential graded module together with its corresponding spectral sequence $\overline{E}^r_{p,q}$. Consider a morphism $f:A_* \to B_*$ that commutes
with the differential $f\circ d = \overline{d} \circ f$ and also preserves filtrations
$f(F^p A_*)\subseteq \overline{F}^p(\overline{A}_*)$ for all $p\geq 0$.
This induces a morphism of spectral sequences
\[
E^r_{p,q}\to \overline{E}^r_{p,q}
\]
by~\citep[Thm. 3.5]{McCleary2001}. We will denote by \SpSq~the category of spectral
sequences, while we will denote by \PSpSq~the category of functors $F:\bbR \rightarrow \SpSq$.

\section{Spectral Sequences for Geometric Realizations}
\label{sec:spectral_sequences}

Recall the persistent Mayer-Vietoris spectral sequence~\citep{TorrasCasas2019}
associated to a pair $(X, \cU)$ of a space with a cover:
\begin{equation}
\label{ex:traditional-MV-ss}
    E^1_{p,q}(X,\cU) = \bigoplus_{\sigma \in N_{\cU}^p} \PH_q(X^{\cU}(\sigma))
    \Rightarrow \PH_{p+q}(\Delta X^\cU)\simeq \PH_{p+q}(X)\ .
\end{equation}
For the details about this spectral sequence in the persistent case we refer the
reader to \citep{TorrasCasas2019}. There are some limitations to the
applicability of this spectral sequence to Vietoris-Rips complexes that were
already pointed out in~\citep{YooGhr2020}: If we choose a cover of a point
cloud $\bX$ and then deduce a cover $\cU$ of the associated Vietoris-Rips
complex $\VR_*(\bX)$ by subcomplexes, then we will only be able to recover
$\PH_{k}(\VR(\bX))$ from $\PH_{k}(\Delta \VR_*(\bX)^\cU)$ for filtration
parameters below an upper bound $R$ determined by the overlaps of the
covering sets. In this section we will present an alternative regular diagram
of CW-complexes that avoids this upper limit problem completely, see
Example~\ref{ex:VR-geom-real}.

Before we solve our problem, we need to introduce some chain complexes. We will come
back to the case of filtrations later, but for now we will focus on regular diagrams instead.
Given a diagram $\cD$ in $\RDiag{K}$, we will denote by
$\cD(\tau \preceq \sigma)_*$ the induced morphism of cellular chain complexes
$C^\cell_*(\cD(\sigma))\to C^\cell_*(\cD(\tau))$.
The cellular chain complex $C^\cell_*(\Delta \cD, \delta^\Delta)$ associated to
$\Delta \cD$ is defined as follows: For all $m \geq 0$ we have that
$C^\cell_m(\Delta \cD)$ is a vector space generated by cells $\sigma \times c$
with $\dim(\sigma) = p$ and $c \in \cD(\sigma)_q$ so that $p+q=m$.
On such a cell $\sigma \times c$ the differential $\delta^\Delta$ is given by
\[
\delta^\Delta(\sigma \times c)= \sum_{\sigma_i \prec \sigma} (-1)^i \left(
\sum_{a \in \overline{\cD(\sigma_i \preceq \sigma)(c)}} [a:\cD(\sigma_i \preceq
\sigma)(c)] \sigma_i \times a \right) +
(-1)^{\dim(\sigma)}\sum_{b \in \overline{c}\setminus c} [b : c] \sigma \times b
\]
where $d_q$ is the differential
$d_q:C_q(\cD(\sigma))\rightarrow C_{q-1}(\cD(\sigma))$ and the sum runs over the
faces $\sigma_i$ of $\sigma$. As we will see in the proof of Lemma~\ref{lem:iso-geom-total}
the map $\delta^\Delta$ is indeed a differential.
In addition, notice that the filtration of $\Delta_K(\cD)$ carries over to
$C_*^\cell{}(\Delta_K\cD)$ by taking
$F^p C_*(\Delta_K\cD) \coloneqq C_*(F^p\Delta_K\cD)$ for all $p\geq 0$.

Now, consider the double complex $(C_{p,q}(\cD), d^V, d^H)$ given by
\[
C_{p,q}(\cD) = \bigoplus_{\sigma \in K^p} C^\cell_q\big(\cD(\sigma)\big)
\]
for all $p,q \geq 0$. The vertical differential is defined by the direct sum of
chain differentials $d^V_{p,q}=(-1)^p\bigoplus_{\sigma \in K^p}d_q^{\sigma}$
where $d^\sigma_*$ denotes the differential from $C^\cell_*(\cD(\sigma))$
for all $\sigma \in K^p$.
The horizontal differential is given by the \cech~differential $d^H_{p,q}$
which is defined for a cell $a \in \cD(\sigma)$ as
$\sum_{\sigma_i \prec \sigma}(-1)^i \cD(\sigma_i \prec \sigma)^*(a)$,
where $\cD(\sigma_i \prec \sigma)^*$ denotes the induced chain
morphism $C^\cell_*(\cD(\sigma))\to C^\cell_*(\cD(\sigma_i))$ for all
faces $\sigma_i$ from $\sigma$. Of course
$d^V\circ d^V = 0$ and $d^H \circ d^H = 0$ by functoriality of
$C^\cell_*(\cdot)$ and the fact that
$\cD(\rho \prec \tau)\cD(\tau \prec \sigma)=\cD(\rho \prec \sigma)$ for any
three simplices $\rho \prec \tau \prec \sigma$. On the other hand,
anticommutativity $d^V \circ d^H = - d^H \circ d^V$ follows since
$\cD(\tau \prec \sigma)^*$ is a chain morphism for all $\tau \prec \sigma$
from $K$.

Now, we consider the double complex spectral sequence from~\citep[Section 2.4]{McCleary2001}.
Given $\cD$ in $\RDiag{K}$ there is a spectral sequence
\[
E^1_{p,q}(\cD) = \bigoplus_{\sigma \in K^p} \Ho_q(\cD(\sigma)) \Rightarrow \Ho_{p+q}(S^\Tot_*(\cD))
\]
where $S^\Tot(\cD)$ is the \emph{total complex} defined as
$S^\Tot_n(\cD) = \bigoplus_{p+q=n} C_{p,q}(\cD)$ together with a differential
$d^\Tot = d^V + d^H$.
Also, recall that the total complex has a filtration induced by the vertical
filtration on $C_{p,q}(\cD)$ given by
\[
F^m S^\Tot_*(\cD) = \bigoplus_{\substack{p+q=n\\ p \leq m}} C_{p,q}(\cD)
\]
for all integers $m \geq 0$, see~\citep{TorrasCasas2019} for an explanation.
We will now relate this total complex to the geometric realization from
Definition~\ref{defn:Geom-Real}.
\begin{lem}
  \label{lem:iso-geom-total}
  There is an isomorphism
$C_*^\cell(\Delta \cD, \delta^\Delta) \simeq S_*^\Tot(\cD)$ which preserves
filtration. That is, $F^pC_*^\cell(\Delta \cD, \delta^\Delta) \simeq
F^pS_*^\Tot(\cD)$ for all $p\geq 0$.
\end{lem}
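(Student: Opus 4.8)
The plan is to write down the obvious candidate isomorphism $\Theta : C_*^\cell(\Delta\cD,\delta^\Delta) \to S_*^\Tot(\cD)$ and check that it is a chain map compatible with filtrations. On generators it should send a cell $\sigma \times c$ of $\Delta\cD$, with $\dim(\sigma)=p$ and $c \in \cD(\sigma)_q$, to the element $c$ sitting in the summand $C^\cell_q(\cD(\sigma))$ of $C_{p,q}(\cD) \subseteq S^\Tot_{p+q}(\cD)$. This is manifestly a bijection on the chosen bases: by Definition~\ref{defn:Geom-Real} the $m$-cells of $\Delta\cD$ are exactly the pairs $\sigma \times c$ with $\dim(\sigma)+\dim(c)=m$, and these index precisely the basis of $\bigoplus_{p+q=m}\bigoplus_{\sigma\in K^p}C^\cell_q(\cD(\sigma))=S^\Tot_m(\cD)$. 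So $\Theta$ is a linear isomorphism for free; the content is that it intertwines the differentials.

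First I would recall the two differentials side by side. On $\sigma \times c$ the map $\delta^\Delta$ splits as a ``horizontal'' part $\sum_{\sigma_i \prec \sigma}(-1)^i \sum_{a}[a:\cD(\sigma_i\preceq\sigma)(c)]\,\sigma_i\times a$, which is exactly the chain-level description of $\cD(\sigma_i\prec\sigma)^*(c)$ expanded in the cell basis of $\cD(\sigma_i)$, and a ``vertical'' part $(-1)^{\dim(\sigma)}\sum_{b\in\overline{c}\setminus c}[b:c]\,\sigma\times b = (-1)^p\,\sigma\times d_q^\sigma(c)$ by the cellular boundary formula. Under $\Theta$ the first part lands on $\sum_{\sigma_i\prec\sigma}(-1)^i \cD(\sigma_i\prec\sigma)^*(c) = d^H_{p,q}(c)$ and the second on $(-1)^p d_q^\sigma(c) = d^V_{p,q}(c)$. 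Hence $\Theta\circ\delta^\Delta = (d^H+d^V)\circ\Theta = d^\Tot\circ\Theta$ on generators, and by linearity everywhere. Incidentally this verifies $\delta^\Delta\circ\delta^\Delta = \Theta^{-1}\circ d^\Tot\circ d^\Tot\circ\Theta = 0$, which is the claim flagged in the statement that $\delta^\Delta$ is a genuine differential (one could also check $(d^\Tot)^2=0$ directly from $(d^V)^2=(d^H)^2=0$ and $d^Vd^H=-d^Hd^V$, already established before the lemma).

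For the filtration statement, $F^p\Delta_K\cD = \bigcup_{\sigma\in K^{\leq p}}\Delta^\sigma\times\cD(\sigma)$, so $F^pC_*^\cell(\Delta\cD)$ is spanned by the cells $\sigma\times c$ with $\dim(\sigma)\leq p$; $\Theta$ sends exactly these basis elements into the summands $C_{p',q}(\cD)$ with $p'\leq p$, i.e. onto the basis of $F^pS^\Tot_*(\cD)=\bigoplus_{p'\leq p}C_{p',q}(\cD)$. So $\Theta$ restricts to an isomorphism $F^pC_*^\cell(\Delta\cD)\xrightarrow{\ \sim\ }F^pS^\Tot_*(\cD)$ for every $p\geq 0$, as required. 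The only mildly delicate point — and the one I would be most careful about — is matching the signs: confirming that the $(-1)^i$ indexing the faces $\sigma_i\prec\sigma$ in $\delta^\Delta$ is precisely the sign convention built into the \v{C}ech differential $d^H$, and that the global $(-1)^{\dim(\sigma)}=(-1)^p$ in front of the internal boundary is exactly the $(-1)^p$ twist in the definition of $d^V_{p,q}$. Once the conventions are pinned down this is a bookkeeping check rather than a real obstacle, and nothing beyond the cellular boundary formula and the definitions of $C_{p,q}(\cD)$, $d^V$, $d^H$ is needed.
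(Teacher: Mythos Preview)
Your proposal is correct and follows essentially the same argument as the paper: you define the obvious basis bijection $\Theta$ (the paper calls it $\psi$, writing $\Theta(\sigma\times c)=(c)_\sigma$), verify $\Theta\circ\delta^\Delta=d^\Tot\circ\Theta$ by matching the horizontal and vertical pieces term by term, and then observe that the filtration condition is immediate from the description of the generating cells. Your remark that this computation also confirms $(\delta^\Delta)^2=0$ is exactly the point the paper alludes to just before the lemma.
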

\begin{proof}
First we define a chain morphism $\psi:C^\cell_m(\Delta \cD)\to S_m^\Tot(\cD)$
generated by the assignment: a cell $\sigma \times c \in (\Delta \cD)_m$ with
$\sigma \in K^p$ and $c \in \cD(\sigma)^q$ for integers $p+q=m$, is sent to
$\psi(\sigma\times c)= (c)_\sigma \in S^\Tot_m(\cD)$. Here we denote by
$(c)_\sigma$ the vector in $S^\Tot_*(\cD)$ that is zero everywhere except at the
entry $C^\cell_q(\cD(\sigma))$. On the other hand, $\psi$ is a chain morphism since
we have the equality
\begin{multline}
\nonumber
\psi\big(\delta^\Delta(\sigma \times c)\big) =
\sum_{\sigma_i \prec \sigma} (-1)^i \Big(
\sum \limits_{a \in \overline{\cD(\sigma_i \preceq \sigma)(c)}} ([a:\cD(\sigma_i \preceq \sigma)(c)] a)_{\sigma_i} \Big) \\ + (-1)^{\dim(\sigma)}\sum_{b \in \overline{c}\setminus c} ([b : c]  b)_\sigma =
\sum_{\sigma_i \prec \sigma} (-1)^i (\cD(\sigma_i \preceq \sigma)^*(c))_{\sigma_i} +
(-1)^{\dim(\sigma)}(d^\sigma(c))_\sigma \\
= (d^H + d^V)((c)_\sigma) = d^\Tot((c)_\sigma).
\end{multline}
One can see that $\psi$ is injective, and admits an inverse
$\psi^{-1}:S_m^\Tot(\cD) \to C^\cell_m(\Delta \cD)$ that sends $(\sigma)_c$ to
$\sigma \times c$. Notice that by definition $\psi$ sends a chain in
$F^p C_n^\cell(\Delta \cD)$ to a chain in $F^pS_n^\Tot(\cD)$ for all $p\geq 0$
and in particular it preserves filtration.
\end{proof}

\begin{rem}
Continuing with the remark at the end of
subsection~\ref{sub:reg-diag-complexes}, we could have considered the homotopy
colimit spectral sequence
$$
E^1_{p,q}(\Bd(K), \cD') = \bigoplus_{\sigma \in \Bd(K)^p}\Ho_q(\cD'(\sigma))
\Rightarrow \Ho_{p+q}(\hocolim \cD).
$$
\end{rem}

Let us construct a diagram of spaces whose geometric realization is homeomorphic
to $|K|$ for any finite simplicial complex $K$.
We start by taking a finite partition $\cP$ of the vertex set $V(K)$ and denote
by $K(U)$ the maximal subcomplex of $K$ with vertices in $U \in \cP$.
We will denote by $\Delta^\cP$ the standard simplex with vertices in~$\cP$.
For a simplex $\tau \in K$, we define $\cP(\tau) \in \Delta^\cP$ to be the
simplex consisting of all partitioning sets $U \in \cP(\tau)$ such that
$\tau \cap U\neq \emptyset$.
In particular if $U \in \cP(\tau)$, then it determines a standard simplex
$\tau(U) \in K(U)$ of dimension $|\tau \cap U|-1\geq 0$. For a vertex $v \in K$,
we will denote by $\cP(v)$ the partitioning set from $\cP$ which
contains $v$.

We define the $(K,\cP)$-\emph{join} diagram $\cJ^K_\cP:\Delta^\cP \rightarrow
\FCWcpx$ for all $\sigma \subseteq \cP$ by assigning
the subspace
\[
\cJ^K_P(\sigma) = \bigcup_{\substack{\rho \in K\\ \cP(\rho)=\sigma}}\prod_{U \in \sigma}\Delta^{\rho(U)}
\hookrightarrow \prod_{U\in \sigma} |K(U)|\ ,
\]
for all $\sigma \in \Delta^\cP$.
For any pair $\tau \preceq \sigma$ in $\Delta^\cP$, the projection
$\pi_{\tau\preceq \sigma}:\prod_{U \in \sigma}|K(U)| \rightarrow \prod_{U \in \tau}|K(U)|$
induces a regular morphism $\cJ^K_\cP(\tau \preceq \sigma):\cJ^K_\cP(\sigma)\rightarrow \cJ^K_\cP(\tau)$.

\begin{lem}
\label{lem:iso-join-diagram}
Let $K$ be a simplicial complex together with a partition $\cP$ of its vertex set $V(K)$. There is a CW-complex homeomorphism $\Delta(\cJ^K_\cP) \simeq |K|$.
\end{lem}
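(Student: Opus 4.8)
The plan is to produce an explicit CW-complex homeomorphism $\Delta(\cJ^K_\cP) \to |K|$, built simplex by simplex out of the "join" structure that the diagram $\cJ^K_\cP$ encodes. The underlying point is that a simplex $\rho \in K$ with $\cP(\rho) = \sigma = \{U_0, \dots, U_k\}$ decomposes canonically as the join $\rho(U_0) * \cdots * \rho(U_k)$ of its intersections with the partitioning blocks, and the geometric realization of $\cJ^K_\cP$ is exactly the gluing construction that reassembles these joins. Concretely, for each such $\rho$ I would use the standard homeomorphism $|\Delta^\rho| \cong \Delta^\sigma \times \prod_{U \in \sigma} \Delta^{\rho(U)}$ coming from the join-coordinate description of a simplex (a point of the join is a point of $\Delta^\sigma$, recording how the barycentric mass is split among the blocks, together with a point of each $\Delta^{\rho(U)}$, recording the internal position). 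First I would check that this family of homeomorphisms, as $\rho$ ranges over $K$, is compatible with face inclusions in $K$ on the one hand and with the gluing relation $\sim$ defining $\Delta(\cJ^K_\cP)$ on the other, so that it descends to a well-defined continuous bijection $\Phi : \Delta(\cJ^K_\cP) \to |K|$.

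**Key steps, in order.**

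\begin{enumerate}
\item Set up the combinatorial bookkeeping: for $\rho \in K$ with $\cP(\rho) = \sigma$, record the block decomposition $\rho = \bigsqcup_{U \in \sigma} (\rho \cap U)$ and the induced standard simplices $\rho(U) \in K(U)$ with $\dim \rho(U) = |\rho \cap U| - 1$.
\item Describe the join-coordinate homeomorphism $h_\rho : \Delta^\sigma \times \prod_{U \in \sigma} \Delta^{\rho(U)} \xrightarrow{\ \cong\ } |\Delta^\rho|$ explicitly on barycentric coordinates, and observe that by definition of $\cJ^K_\cP(\sigma)$ the cell $\sigma \times \prod_{U \in \sigma}\Delta^{\rho(U)}$ of $\Delta(\cJ^K_\cP)$ is precisely the domain piece indexed by $\rho$.
\item Check face compatibility: if $\tau \preceq \rho$ in $K$, then $\cP(\tau) \preceq \cP(\rho)$, and the inclusion $|\Delta^\tau| \hookrightarrow |\Delta^\rho|$ corresponds under $h_\tau, h_\rho$ to the map $\Delta^{\cP(\tau)} \hookrightarrow \Delta^{\cP(\rho)}$ together with the face inclusions $\Delta^{\tau(U)} \hookrightarrow \Delta^{\rho(U)}$ — i.e. exactly to the morphism $\cJ^K_\cP(\cP(\tau) \preceq \cP(\rho))$ applied in the relevant coordinates. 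This is the step that makes $\Phi$ well defined on the quotient $\Delta(\cJ^K_\cP)$: the identifications forced by $\sim$ are carried by $h_\bullet$ to identifications that already hold inside $|K|$, and conversely every point of $|\Delta^\rho|$ is hit.
\item Conclude that $\Phi$ is a continuous bijection between compact Hausdorff spaces (both are finite CW-complexes, hence compact Hausdorff), so $\Phi$ is a homeomorphism; and track the cell structure to see it is a CW-complex isomorphism — the closed cell $\sigma \times a$ of $\Delta(\cJ^K_\cP)$, for $a \in \cJ^K_\cP(\sigma)$ a cell of $\prod_U |K(U)|$, maps onto a closed simplex of $|K|$, matching the face relation from Definition~\ref{defn:Geom-Real}.
\end{enumerate}

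**Where the difficulty lies.**

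The only genuinely delicate point is Step 3 — verifying that the join-coordinate homeomorphisms are simultaneously compatible with \emph{all} face maps of $K$ and with the block-projection morphisms $\cJ^K_\cP(\tau \preceq \sigma)$, so that $\Phi$ descends to the quotient and is injective there. There is a subtlety when a face $\tau \preceq \rho$ meets \emph{fewer} blocks than $\rho$ does (so $\cP(\tau) \subsetneq \cP(\rho)$): then some coordinate $\Delta^{\rho(U)}$ must degenerate, and one has to confirm that the gluing relation $\sim$ in $\Delta(\cJ^K_\cP)$, which only records identifications along $\tau \preceq \sigma$ in $\Delta^\cP$ via the projections $\pi_{\tau \preceq \sigma}$, produces exactly the face identification in $|K|$ and nothing more. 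Once the barycentric-coordinate formula for $h_\rho$ is written down carefully, this becomes a routine check, but it is the heart of the lemma. The compactness argument in Step 4 is then immediate and replaces any need to exhibit $\Phi^{-1}$ directly.
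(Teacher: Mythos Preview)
Your proposal is correct and follows essentially the same route as the paper: both arguments rest on the explicit barycentric ``join-coordinate'' identification $\Delta^\sigma \times \prod_{U \in \sigma}\Delta^{\rho(U)} \cong |\Delta^\rho|$, assembled simplex by simplex and checked against the gluing relation. The only difference is cosmetic: the paper writes down the inverse map explicitly (with the same degeneracy case you flag in Step~3, handled by the equivalence relation collapsing the stray factor when a block-coordinate vanishes), whereas you invoke the compact--Hausdorff shortcut to avoid this; either way the content is the same.
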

\begin{proof}
Consider the continuous map $f:\Delta(\cJ^K_\cP)\to |K|$ defined by mapping a point
\[
\Big(\sum_{U\in \cP(\tau)} y_U U,
\Big( \sum_{v \in U}x_v v\Big)_{U \in \cP(\tau)}  \Big) \in \Delta^{\cP(\tau)}\times
\prod_{U \in \cP(\tau)}\Delta^{\tau(U)} \bigg/\sim
\]
to $\sum_{v \in \tau} y_{\cP(v)} x_v v$ in $\Delta^\tau$ for
all $\tau \in K$; where we have
values $0 \leq y_U \leq 1$ and $0 \leq x_v \leq 1$ for all $U \in \cP(\tau)$ and all $v \in U$, and such that
$\sum_{U\in \cP(\tau)} y_U = 1$ and $\sum_{v \in U}x_v = 1$ for all $U \in \cP$.
On the other hand, let $\sum_{v \in \tau}x_v v \in \Delta^\tau$ be a point such
that $0 \leq x_v \leq 1$ for all $v \in \Delta^\tau$
and such that $\sum_{v \in \tau}x_v = 1$. Then we
can define the inverse continuous map
\[
f^{-1}\Big(\sum_{v \in \tau}x_v v \Big) =
\bigg( \sum_{U\in \cP(\tau)} \Big( \sum_{v \in U}x_v\Big) U,
\bigg( \psi_U \Big(\sum_{v \in \tau}x_v v \Big) \bigg)_{U\in \cP(\tau)}\bigg)
\]
where
\[
\psi_U \Big(\sum_{v \in \tau}x_v v \Big) =
\begin{cases}
  \sum_{v \in \tau}\bigg(\dfrac{x_v}{\sum_{v \in \tau(U)}x_v}\bigg)v
  \mbox{ if } \sum_{v \in \tau(U)}x_v \neq 0\\
  *  \in \Delta^{\tau(U)} \mbox{ otherwise (see below),}
\end{cases}\ .
\]
If $\sum_{v \in \tau(U)}x_v = 0$, then $x_v = 0$ for all $v \in \tau(U)$ and the $U$-coordinate of the simplex $\tau$ in $\Delta^{\cP(\tau)}$ is $0$, which means that $\tau(U)$ is collapsed to a single point by the equivalence relation used to define $\Delta(\cJ^K_\cP)$.
It is straightforward to check that $f$ and $f^{-1}$ are well defined and consistent
along $K$.
\end{proof}

\begin{exmp}
  Consider a simplicial complex $K$ depicted in the top left part of Figure~\ref{fig:KPjoin-illustrated}.
  We consider a partition of the vertex set of $K$ into the two subsets $\cP=\{U,V\}$,
  where points in $U$ are indicated by black circles and points in $V$ are
  indicated by red squares. In the top right of Figure~\ref{fig:KPjoin-illustrated},
  we depict the standard $1$-simplex $\Delta^\cP$ together with the diagram
  $J^K_\cP$ over it. In particular, notice that $J^K_\cP([U,V])$ is a subset
  of the product $|K(U)| \times |K(V)|$ and that the
  morphisms $J^K_\cP([U,V])\to J^K_\cP(V) = |K(V)|$ and $J^K_\cP([U,V])\to J^K_\cP(U) = |K(U)|$
  are both projections. Finally, the bottom left of Figure~\ref{fig:KPjoin-illustrated}
  shows the geometric realization $\Delta J^K_\cP$, where each green line and
  each red dashed line gets ``squashed'' to a single point.

  \begin{figure}[ht]
  \begin{center}
  \begin{tikzpicture}
  \node at (-1.5,-1) {\Large $K$};
  \node at (1,-1) {
    \begin{tikzpicture}
      \fill[blue!20] (0,0)--(2,1)--(0,1)--cycle;
      \fill[blue!40] (0,1)--(2,1)--(2,2)--(0,2)--cycle;
      \node[fill=black,circle,inner sep=3pt] (A) at (0,0) {};
      \node[fill=black,circle,inner sep=3pt] (B) at (0,1) {};
      \node[fill=black,circle,inner sep=3pt] (C) at (0,2) {};
      \node[draw=black,line width=0.5mm,fill=red,rectangle,inner sep=4pt] (D) at (2,1) {};
      \node[draw=black,line width=0.5mm,fill=red,rectangle,inner sep=4pt] (E) at (2,2) {};
      \draw[line width=0.5mm] (A) -- (C)-- (E)--(D)--(A);
      \draw[line width=0.5mm] (D)--(B)--(E);
      \draw[line width=0.5mm, dashed] (C)--(D);
      \node at (0,2.7) {\bf $U$};
      \node at (2,2.7) {\bf $V$};
    \end{tikzpicture}
  };
  \node at (4.2,-2.2) {$J^K_\cP(U)$};
  \node[rotate=45, fill=green!20] (JU) at (4.5,-3.5) {
    \begin{tikzpicture}
      \draw[line width=0.5mm] (0,0)--(0,2);
      \fill (0,0) circle (3pt);
      \fill (0,1) circle (3pt);
      \fill (0,2) circle (3pt);
    \end{tikzpicture}
  };
  \node at (8.8,-2.2) {$J^K_\cP(V)$};
  \node[rotate=45, fill=green!20] (JV) at (8.5,-3.5) {
    \begin{tikzpicture}
      \draw[line width=0.5mm] (0,0)--(1,0);
      \node[draw=black,line width=0.5mm,fill=red,rectangle,inner sep=4pt] at (0,0) {};
      \node[draw=black,line width=0.5mm,fill=red,rectangle,inner sep=4pt] at (1,0) {};
    \end{tikzpicture}
  };
  \node at (6.5,0.2) {$J^K_\cP([U,V])$};
  \node[rotate=45, fill=gray!20] (JUV) at (6.5,-1.5) {
    \begin{tikzpicture}
      \fill[blue!20] (0,1)--(0,2)--(1,2)--(1,1)--cycle;
      \fill[gray!20] (0,0)--(0,1)--(1,1)--(1,0)--cycle;
      \draw[line width=0.5mm] (0,1)--(0,2)--(1,2)--(1,1)--cycle;
      \draw[line width=0.5mm] (0,0)--(0,1);
      \draw[line width=0.5mm, dashed, gray] (0,0)--(1,0)--(1,1);
      \fill (0,0) circle (3pt);
      \fill (0,1) circle (3pt);
      \fill (0,2) circle (3pt);
      \filldraw[draw=gray, line width=0.5mm,fill=gray!30] (1,0) circle (3pt);
      \fill (1,1) circle (3pt);
      \fill (1,2) circle (3pt);
    \end{tikzpicture}
  };
  \draw[line width=0.5mm] (4.5,-5.5) -- (8.5,-5.5);
  \node (NUV) at (6.5,-5.5) {};
  \node[anchor=north] at (6.5,-5.8) {\bf $[U,V]$};
  \node[fill=black, circle, inner sep=3pt] (NU) at (4.5,-5.5) {};
  \node[anchor=north] at (4.5,-5.8) {\bf $U$};
  \node[draw=black,line width=0.5mm,fill=red, rectangle, inner sep=4pt] (NV) at (8.5,-5.5) {};
  \node[anchor=north] at (8.5,-5.8) {\bf $V$};
  \node at (10,-5) {\Large $\Delta^\cP$};
  \node at (10,-1) {\Large $J^K_\cP$};
  \draw[->, line width=0.5mm] (JUV)--(JU);
  \draw[->, line width=0.5mm] (JUV)--(JV);
  \draw[line width=1mm, green!20] (NU)--(JU);
  \draw[line width=1mm, green!20] (NV)--(JV);
  \draw[line width=1mm, gray!20] (NUV)--(JUV);
  \node at (-1.5,-5) {\Large $\Delta J^K_\cP$};
  \node (GR) at (1,-5) {
    \begin{tikzpicture}
      \fill[blue!20] (0,0)--(1,0.5)--(2.5,0.5)--(1.5,0)--cycle;
      \fill[blue!40] (1,0.5)--(2.5,0.5)--(2.5,2)--(1,2)--cycle;
      \fill[blue!20] (1,2)--(2,2.5)--(3.5,2.5)--(2.5,2)--cycle;
      \fill[blue!10] (3.5,2.5)--(2.5,2)--(2.5,0.5)--(3.5,1)--cycle;
      \draw[green!80!black, line width=0.5mm] (1.5,0)--(3.5,1);
      \draw[green!80!black, line width=0.5mm] (2.5,2)--(3.5,2.5);
      \draw[green!80!black, line width=0.5mm] (2.5,1.7)--(3.5,2.2);
      \draw[green!80!black, line width=0.5mm] (2.5,1.4)--(3.5,1.9);
      \draw[green!80!black, line width=0.5mm] (2.5,1.1)--(3.5,1.6);
      \draw[green!80!black, line width=0.5mm] (2.5,0.8)--(3.5,1.3);
      \draw[red!80!black, line width=0.5mm, dashed] (1,0.5)--(1,2);
      \draw[red!80!black, line width=0.5mm, dashed] (1.2,0.6)--(1.2,2.1);
      \draw[red!80!black, line width=0.5mm, dashed] (1.4,0.7)--(1.4,2.2);
      \draw[red!80!black, line width=0.5mm, dashed] (1.6,0.8)--(1.6,2.3);
      \draw[red!80!black, line width=0.5mm, dashed] (1.8,0.9)--(1.8,2.4);
      \draw[red!80!black, line width=0.5mm, dashed] (2,1)--(2,2.5);
      \fill (0,0) circle (2pt);
      \fill (1,0.5) circle (2pt);
      \fill (2,1) circle (2pt);
      \fill (1.5,0) circle (2pt);
      \fill (2.5,0.5) circle (2pt);
      \fill (3.5,1) circle (2pt);
      \fill (1,2) circle (2pt);
      \fill (2,2.5) circle (2pt);
      \fill (2.5,2) circle (2pt);
      \fill (3.5,2.5) circle (2pt);
    \end{tikzpicture}
  };
  \end{tikzpicture}
  \end{center}
  \caption{Illustration of an example of a $J^K_\cP$ diagram.}
  \label{fig:KPjoin-illustrated}
  \end{figure}
\end{exmp}

Note that $\cJ^K_\cP$ is not a diagram of simplicial complexes, but of
\emph{prodsimplicial} complexes as in \citep[Def.~2.43]{Kozlov2008}. See Figure~\ref{fig:seven-simplex} for an example of
$\cJ^K_\cP$ where $\cP$ partitions the vertex set of a $7$-simplex.
In particular, one can consider the spectral sequence
\[
E^1_{p,q} (\cJ^K_\cP)=\bigoplus_{\sigma \in \Delta^\cP}
\Ho_q(\cJ^K_\cP(\sigma))\Rightarrow \Ho_{p+q}(K)\ .
\]

\begin{figure}[ht]
\begin{center}
\begin{tikzpicture}
  \node (simplex7) at (-5,0) {
  \begin{tikzpicture}[scale=0.3]
  \foreach \a in {0, 51.43, 102.86, 154.29,
       205.71, 257.14, 308.57}
  {
    \foreach \b in {0, 51.43, 102.86, 154.29,
         205.71, 257.14, 308.57}
    {
      \draw (\a:5)--(\b:5);
    }
  }
  \filldraw[thick, draw=red, fill=red,semitransparent] (0:5)--(51.43:5)-- (102.86:5)--cycle;
  \fill[red] (0:5) circle (10pt);
  \fill[red] (51.43:5) circle (10pt);
  \fill[red] (102.86:5) circle (10pt);
  \fill[blue] (154.29:5) circle (10pt);
  \fill[blue] (205.71:5) circle (10pt);
  \draw[blue, thick] (154.29:5)--(205.71:5);
  \fill[green] (257.14:5) circle (10pt);
  \fill[green] (308.57:5) circle (10pt);
  \draw[green, thick] (257.14:5)--(308.57:5);
  \end{tikzpicture}};
\node (ABC) at (0,0) {
\begin{tikzpicture}[scale=0.3]
\fill (0,0) circle (5pt);
\fill (1,-1) circle (5pt);
\fill (1.5,1.5) circle (5pt);
\fill (4,0) circle (5pt);
\fill (5,-1) circle (5pt);
\fill (5.5,1.5) circle (5pt);
\fill (0,4) circle (5pt);
\fill (1,3) circle (5pt);
\fill (1.5,5.5) circle (5pt);
\fill (4,4) circle (5pt);
\fill (5,3) circle (5pt);
\fill (5.5,5.5) circle (5pt);
\draw (0,0)--(1,-1)--(1.5,1.5)--cycle;
\draw (4,0)--(5,-1)--(5.5,1.5)--cycle;
\draw (0,4)--(1,3)--(1.5,5.5)--cycle;
\draw (4,4)--(5,3)--(5.5,5.5)--cycle;
\draw (0,0)--(4,0)--(4,4)--(0,4)--cycle;
\draw (1,-1)--(5,-1)--(5,3)--(1,3)--cycle;
\draw (1.5,1.5)--(5.5,1.5)--(5.5,5.5)--(1.5,5.5)--cycle;
\end{tikzpicture}};
\node (BC) at (3,2) {
\begin{tikzpicture}[scale=0.3]
\fill (0,0) circle (5pt);
\fill (1,-1) circle (5pt);
\fill (1.5,1.5) circle (5pt);
\fill (4,0) circle (5pt);
\fill (5,-1) circle (5pt);
\fill (5.5,1.5) circle (5pt);
\draw (0,0)--(1,-1)--(1.5,1.5)--cycle;
\draw (4,0)--(5,-1)--(5.5,1.5)--cycle;
\draw (0,0)--(4,0);
\draw (1,-1)--(5,-1);
\draw (1.5,1.5)--(5.5,1.5);
\end{tikzpicture}};
\node (AC) at (3,0) {
\begin{tikzpicture}[scale=0.3]
\fill (0,0) circle (5pt);
\fill (4,0) circle (5pt);
\fill (0,4) circle (5pt);
\fill (4,4) circle (5pt);
\draw (0,0)--(4,0)--(4,4)--(0,4)--cycle;
\end{tikzpicture}};
\node (AB) at (3,-2) {
\begin{tikzpicture}[scale=0.3]
\fill (0,0) circle (5pt);
\fill (1,-1) circle (5pt);
\fill (1.5,1.5) circle (5pt);
\fill (0,4) circle (5pt);
\fill (1,3) circle (5pt);
\fill (1.5,5.5) circle (5pt);
\draw (0,0)--(1,-1)--(1.5,1.5)--cycle;
\draw (0,4)--(1,3)--(1.5,5.5)--cycle;
\draw (0,0)--(0,4);
\draw (1,-1)--(1,3);
\draw (1.5,1.5)--(1.5,5.5);
\end{tikzpicture}};
\node (A) at (6,2){
\begin{tikzpicture}[scale=0.4]
\draw[thick, green] (0,0)--(4,0);
\fill[green] (0,0) circle (5pt);
\fill[green] (4,0) circle (5pt);
\draw[color=white] (0,-1)--(4,-1)--(4,1)--(0,1)--cycle;
\end{tikzpicture}};
\node (B) at (6,0) {
\begin{tikzpicture}[scale=0.4]
\filldraw[thick, draw=red, fill=red!20] (0,0)--(1,-1)--(1.5,1.5)--cycle;
\fill[red] (0,0) circle (5pt);
\fill[red] (1,-1) circle (5pt);
\fill[red] (1.5,1.5) circle (5pt);
\end{tikzpicture}};
\node (C) at (6,-2) {
\begin{tikzpicture}[scale=0.4]
\draw[thick, blue] (0,0)--(0,4);
\draw[color=white] (-1,0)--(-1,5);
\fill[blue] (0,0) circle (5pt);
\fill[blue] (0,4) circle (5pt);
\end{tikzpicture}};
\draw[->, thick] (ABC)--(BC);
\draw[->, thick] (ABC)--(AC);
\draw[->, thick] (ABC)--(AB);
\draw[->, thick] (BC)--(A);
\draw[->, thick] (BC)--(B);
\draw[->, thick] (AC)--(A);
\draw[->, thick] (AC)--(C);
\draw[->, thick] (AB)--(B);
\draw[->, thick] (AB)--(C);
\end{tikzpicture}
\end{center}
\caption{On the left the $7$-simplex together with a partition $\cP$ of its vertex set. On the right, the associated diagram of spaces $\cJ^K_\cP$.}
\label{fig:seven-simplex}
\end{figure}

Now, let us consider a filtered simplicial complex $K_* \in \FCWcpx$ such that
its vertex set $V(K_*)$ is fixed throughout all values of $\bbR$. Let $\cP$ be a
partition of $V(K_*)$. We define the filtered regular diagram
$\cJ^K_{\cP} \in \FRDiag{\cP}$ by sending $r \in \bbR$ to $\cJ^{K_r}_{\cP}$. These diagrams
inherit the shift morphisms $\Sigma K_*$ from $K_*$ in the following way: Let $\sigma \in \Delta^\cP$
and notice that we have restrictions $\Sigma^{s-r}K_{|U}:|K_r(U)| \rightarrow |K_s(U)|$
for all $U \in \sigma$, so that we have induced morphisms
\[
\prod_{U \in \sigma }\Sigma^{s-r}K_{|U}: J^{K_r}_\cP(\sigma) \to J^{K_s}_\cP(\sigma)
\]
for all $\sigma \in \Delta^\cP$. In turn, these induce a shift morphism on
$\Delta J^K_\cP$ which respect filtrations, so that we have a commutative diagram
\[
\begin{tikzcd}
E^*_{p,q}(J^{K_r}_\cP) \ar[Rightarrow, r] \ar[d] &
\Delta J^{K_r}_\cP \ar[r, "\simeq"] \ar[d] &
K_r \ar[d] \\
E^*_{p,q}(J^{K_s}_\cP) \ar[Rightarrow, r] &
\Delta J^{K_s}_\cP \ar[r, "\simeq"] &
K_s
\end{tikzcd}
\]
and thus $\PH_*(\Delta \cJ^K_\cP)\simeq \PH_*(K_*)$.
For each simplex $\sigma \in \Delta^\cP$ one can see
$\cJ^{K}_{\cP}(\sigma)$ as a filtered simplicial complex, so that
\[
E^1_{p,q}(\cJ^{K}_{\cP}) = \bigoplus_{\sigma \in (\Delta^\cP)^p}\PH_q(\cJ^K_\cP(\sigma))\Rightarrow \PH_{p+q}(K) \ .
\]

\begin{exmp}
\label{ex:VR-geom-real}
Consider a point cloud $\bX$, a partition $\cP$ and consider its Vietoris Rips complex $\VR_*(\bX) \in \FCWcpx$.
In this case we have a fixed partition of the vertex set of $\VR_*(\bX)$, which allows us to consider the spectral sequence:
\[
E^1_{p,q}\big(\cJ^{\VR_*(\bX)}_\cP\big)=\bigoplus_{\sigma \in \Delta^\cP} \PH_q(\cJ^{\VR_*(\bX)}_\cP(\sigma))\Rightarrow \PH_{p+q}(\VR_*(\bX))\ .
\]
This is very convenient as it avoids the main difficulties with the Mayer-Vietoris blowup complex associated to a cover.
Namely, one recovers $\PH_*(K)$ completely without any bounds depending on the cover overlaps.
In addition, notice that $\Delta \cJ^{\VR_*(\bX)}_\cP$ does not `blowup' more than $\VR_*(\bX)$.
\end{exmp}

The $(K,\cP)$-join diagram is related to~\citep[Example~4]{Robinson2020}. There the motivation behind the filtrations is given by a consistency radius and a filtration based on the differences between local measurements. The same example appears (without a filtration) as one of the opening examples in~\citep[Appendix~4.G]{Hatcher2002}.

\section{$\veps$-acyclic carriers}
\label{sec:eps-carriers}

The following definition will encode our notion of `noise'.

\begin{defn}
Let $X, Y \in \RCWcpx$. An $\veps$-acyclic
carrier $F_*^\veps:X_*\rightrightarrows Y[\veps]_{*}$  is a family of
acyclic carriers $F_a^\veps:X_a \rightrightarrows Y_{a+\veps}$ for all $a \in \bbR$
such that
$$
Y(a+\veps \leq b+ \veps)F_a^\veps(c) \subseteq
F_{b}^\veps (X(a \leq b)(c))
$$
for all cells $c$ of $X_a$ and $a,b \in \bbR$ with $a\leq b$.
\end{defn}

The proposition below is an adaptation of \citep[Thm.~13.4]{Munk1984} or \citep[Lem.~2.4]{CookeFinney1967} to the
context of tame filtered CW-complexes.

\begin{prop}
\label{prop:eps-acyclic-carriers}
Let $X_*, Y_* \in \FCWcpx$ be tame, and assume that there exists an $\veps$-acyclic carrier
\[
    F_*^\veps:X_*\rightrightarrows Y[\veps]_{*}\ .
\]
Then there exist chain morphisms
$f_a^\veps:C_*(X_a) \rightarrow C_*(Y_{a+\veps})$ carried by $F_a^\veps$ for all
$a \in \bbR$, so that $Y(a+\veps \leq b+\veps) \circ f_a^\veps = f_{b}^\veps \circ X(a\leq b)$.
Furthermore, given another such sequence of morphisms $g_a^\veps:C_*(X_a) \rightarrow C_*(Y_{a+\veps})$,
there exist chain homotopy equivalences $H_a^\veps:g_a^\veps \simeq f_a^\veps$ which
are carried by $F_a^\veps$ for all $a \in \bbR$.
\end{prop}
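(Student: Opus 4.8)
The plan is to reduce the filtered statement to the non-filtered Acyclic Carrier Theorem (Theorem~\ref{thm:acyclic-carrier}) by building the chain maps one ``interval at a time'' using tameness, and then patching together the choices so that they commute strictly with the internal maps $X(a\le b)$.

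\medskip

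\noindent\textbf{Setup via tameness.} Since $X_*$ and $Y_*$ are tame, there is a common finite set of breakpoints $t_0 < t_1 < \dots < t_N$ such that both $X_*$ and $Y_*$ are constant on each right-open interval $[t_k, t_{k+1})$ (and on $(-\infty, t_0)$ and $[t_N, \infty)$), and we may assume this set is closed under the shift $t \mapsto t + \veps$ in the relevant range; concretely, refine the breakpoints of $X$, of $Y$, of $Y[\veps]$, and their $\veps$-translates into one finite list. It then suffices to construct compatible chain maps $f^\veps_{t_k}$ at the breakpoints, because for $a \in [t_k, t_{k+1})$ the complex $C_*(X_a)$ is canonically $C_*(X_{t_k})$, the complex $C_*(Y_{a+\veps})$ is canonically $C_*(Y_{t_k + \veps})$ (after possibly subdividing so that $a+\veps$ lies in a constancy interval of $Y$), and we simply set $f^\veps_a := f^\veps_{t_k}$; the carrier $F^\veps_a$ is likewise constant on such intervals up to the identifications, so ``carried by'' is preserved. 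For a general pair $a \le b$ the map $X(a \le b)$ factors through the breakpoints, so commutativity at breakpoints implies it in general.

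\medskip

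\noindent\textbf{Inductive construction of the $f^\veps_{t_k}$.} Build the $f^\veps_{t_k}$ by induction on $k$. For the base case pick, using the existence part of Theorem~\ref{thm:acyclic-carrier}, any chain map $f^\veps_{t_0}: C_*(X_{t_0}) \to C_*(Y_{t_0+\veps})$ carried by the acyclic carrier $F^\veps_{t_0}$. For the inductive step, suppose $f^\veps_{t_{k}}$ has been constructed; consider the composite
\[
C_*(X_{t_k}) \xrightarrow{\ f^\veps_{t_k}\ } C_*(Y_{t_k+\veps}) \xrightarrow{\ Y(t_k+\veps \le t_{k+1}+\veps)\ } C_*(Y_{t_{k+1}+\veps}).
\]
I claim this composite is carried by the carrier $\sigma \mapsto Y(t_k+\veps \le t_{k+1}+\veps)\bigl(F^\veps_{t_k}(\sigma)\bigr)$ viewed inside $Y_{t_{k+1}+\veps}$, which by the defining inequality of an $\veps$-acyclic carrier is contained in $F^\veps_{t_{k+1}}\bigl(X(t_k \le t_{k+1})(\sigma)\bigr)$. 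Now I want a chain map $f^\veps_{t_{k+1}}: C_*(X_{t_{k+1}}) \to C_*(Y_{t_{k+1}+\veps})$ carried by $F^\veps_{t_{k+1}}$ whose precomposition with $X(t_k \le t_{k+1})$ equals the composite above. This is exactly an extension problem for acyclic carriers: one runs the standard skeleton-by-skeleton construction of Theorem~\ref{thm:acyclic-carrier} on $C_*(X_{t_{k+1}})$, but on the subcomplex that is the image of $X(t_k \le t_{k+1})$ one is forced to use the already-chosen values (which is legal precisely because those values are carried by the subcarrier of $F^\veps_{t_{k+1}}$, and $F^\veps_{t_{k+1}}(\sigma)$ is nonempty and acyclic for \emph{every} cell $\sigma$, so the usual lifting/coning argument against acyclicity goes through on the remaining cells). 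Iterating to $k = N$ finishes the construction of all $f^\veps_a$ and the required naturality $Y(a+\veps \le b+\veps)\circ f^\veps_a = f^\veps_b \circ X(a \le b)$.

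\medskip

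\noindent\textbf{The chain homotopies.} For the uniqueness-up-to-homotopy statement, given another compatible family $g^\veps_a$, apply the same interval-wise strategy to the homotopies. At each breakpoint $t_k$ both $f^\veps_{t_k}$ and $g^\veps_{t_k}$ are carried by $F^\veps_{t_k}$, so by the equivalence part of Theorem~\ref{thm:acyclic-carrier} there is a chain homotopy carried by $F^\veps_{t_k}$; as in the construction of the $f$'s, do this inductively in $k$, at each step extending the already-chosen homotopy on the image subcomplex $X(t_{k-1}\le t_k)\bigl(C_*(X_{t_{k-1}})\bigr)$ to all of $C_*(X_{t_k})$, using acyclicity of $F^\veps_{t_k}(\sigma)$ on the remaining cells; then propagate the homotopies to all $a$ by constancy on intervals. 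The compatibility $Y(a+\veps\le b+\veps)\circ H^\veps_a = H^\veps_b\circ X(a\le b)$ comes for free from the breakpoint-commutativity, just as for the $f^\veps_a$.

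\medskip

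\noindent\textbf{Main obstacle.} The only genuinely non-formal point is the relative form of the Acyclic Carrier Theorem used in both inductive steps: extending a chain map (or chain homotopy) already defined on a subcomplex to the whole complex while staying carried. This needs the acyclic carrier to restrict to an acyclic carrier on the subcomplex and a careful check that the inductive cone-off over acyclic subcomplexes is unobstructed on the quotient cells; the bookkeeping of identifying $C_*(X_a)$ with $C_*(X_{t_k})$ and $C_*(Y_{a+\veps})$ with $C_*(Y_{t_\ell+\veps})$ across the common breakpoint set (so that ``carried by'' and the naturality squares are literally, not just up to canonical iso, satisfied) is the other place where care is required.
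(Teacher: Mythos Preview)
Your proposal is correct and follows essentially the same strategy as the paper: use tameness to induct over filtration values, at each stage force the map on the image subcomplex by the commutativity requirement, and then extend to the remaining cells via the skeleton-by-skeleton acyclicity argument. The paper phrases the outer induction slightly differently (it works directly with ``cells new at value $b$'' rather than enumerating breakpoints), and for the homotopy statement it reduces to the chain-map case by the standard $X\times I$ trick---setting $H^\veps_b(c,i)=F^\veps_b(c)$ on $X\times I$ and observing $C^{\rm cell}_*(X_b)\otimes\cI\cong C^{\rm cell}_*(X_b\times I)$---whereas you rerun the relative extension argument for homotopies directly; both are fine.
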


\begin{proof}
Let $b \in \bbR$ and assume that $f^\veps_a$ has already been defined for all
values $a < b$, where we allow for $b = -\infty$.
We first define $f^\veps_b$ on all cells which are in the image of
$X(a < b)$ for any $a < b$ using the definition
$$
    f^\veps_b \circ X(a < b) = Y(a+\veps < b + \veps) \circ f^\veps_{a}\ .
$$
Notice that the assumption that $X_a \subseteq X_b$ is crucial for this to work.
By hypotheses, given a cell $c \in \img(X(a < b))$, its image $f_b^\veps(c)$ is
then contained in
$$
Y(a+\veps < b+ \veps)F_a^\veps(\widetilde{c}) \subseteq
F_{b}^\veps (X(a < b)(\widetilde{c}))\ ,
$$
where $\widetilde{c} \in X_a$ is such that
$c = X(a < b)(\widetilde{c})$.
Hence, $f^\veps_b$ satisfies the carrier condition.
Next we define $f^\veps_b$ on the remaining cells in
$$
\widetilde{X_b} = X_b \setminus \big( \bigcup_{a < b} X(a < b)\big)\ .
$$
We proceed to prove this by induction.
First, choose a $0$-cell $f^\veps_b(v) \in F_b^\veps(v)$ for each remaining $0$-cell $v \in \widetilde{X_b}$,
 and notice that $d_* f^\veps_b(v) = 0 = f^\veps_b(d_* v)$, where we use $d_*$ for the chain complex differentials.
Next, by induction, assume that for a fixed $p\geq 0$, the $p$-cells $s \in X_b$ have image
$f_b^\veps(s)$ carried by $F^\veps_b(s)$ and such that
$d_* \circ f^\veps_b(s) = f^\veps_b \circ d_*(s)$. We would like to extend
$f_b^\veps$ to the $(p+1)$-cells. By semicountinuity, given such a cell $c \in X_b$, its
boundary $d_* c$ will be contained in~$F^\veps_b(c)$. On the other hand, notice that
by linearity and the induction hypotheses
$d_* f^\veps_b(d_* c) = f^\veps_b(d_* d_* c)= 0$, thus
$f^\veps_b(d_* c)$ is a cycle in $F^\veps_b(c)$.
By acyclicity, there exists $h \in F^\veps_b(c)$
such that $d_* h = f^\veps_b(d_* c)$ and thus we can define
$f^\veps_b(c)= h$. Altogether, we have defined a chain morphism $f^\veps_b$
which is carried by $F^\veps_b$. Since $X$ is tame, the statement holds for all
filtration values on $\bbR$.

Now, assume that $g_b^\veps$ is also carried by $F^\veps_b$ for all $b \in \bR$.
Following \citep[Sec.~12.3]{May1999}, we define the chain complex $\cI$
given by $\cI_0 = \langle [0], [1]\rangle$ and $\cI_1 = \langle [0,1]\rangle$ and
$\cI_k = 0$ for $k >0$. This is the cellular chain complex of the unit interval $I$ decomposed into two $0$-cells and one $1$-cell. A chain homotopy $h_b^\veps:f^\veps_b \simeq g^\veps_b$ corresponds to a chain map $h_b^\veps \colon C_*^{\rm cell}(X_b) \otimes \cI \rightarrow C_*^{\rm cell}(Y_b)$ such that $h_b^\veps(x,[0])= f^\veps_b(x)$
and $h_b^\veps(x,[1])=g^\veps_b(x)$ for all $x \in X_b$. Let $H_b^\veps(c,i) = F_b^\veps(c)$ for a cell $(c,i) \in X \times I$. By assumption $H^\veps \colon X \times I \rightrightarrows Y$ is an $\veps$-acyclic carrier. Note that $C_*^{\rm cell}(X_b) \otimes \cI \cong C_*^{\rm cell}(X_b \times I)$. Replicating the first part of the proof we can now extend any map $h_b^\veps \colon C_*^{\rm cell}(X_b) \otimes \cI_0
 \rightarrow C_*^{\rm cell}(Y_b)$ with the above properties to all cells of $X \times I$.
\end{proof}

\begin{defn}
Let $X_*, Y_* \in \RCWcpx$. We will call an $\veps$-acyclic carrier $I^{\veps}_X:X_*\rightrightarrows X_{*+\veps}$ carrying the standard shift $\Sigma^\veps X_*$ a \emph{shift} carrier. Suppose that there are $\veps$-acyclic carriers
\begin{align*}
 F^\veps:X_* \rightrightarrows Y_{*+\veps} \ ,\\
 G^\veps:Y_* \rightrightarrows X_{*+\veps} \ .
\end{align*}
together with shift carriers $I_X^{2\veps}$ and $I_Y^{2\veps}$.
We say that $X_*$ and $Y_*$ are \emph{$\veps$-acyclic equivalent}
whenever we have inclusions $G^\veps \circ F^\veps \subseteq I^{2\veps}_X$
and $F^\veps \circ G^\veps\subseteq I^{2\veps}_Y$.
\end{defn}

The motivation for the definition of $\veps$-acyclic equivalences is the following lemma:
\begin{prop}
\label{prop:FCWcpx-veps-interleaving}
Let $X_*$ and $Y_*$ be two tame elements from $\FCWcpx$
which are \emph{$\veps$-acyclic equivalent}.
Then $\PH(X_*)$ and $\PH(Y_*)$ are $\veps$-interleaved.
\end{prop}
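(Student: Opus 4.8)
The plan is to realise the interleaving morphisms at the chain level from the two $\veps$-acyclic carriers, pass to cellular homology, and then use the equivalence (chain-homotopy) half of Proposition~\ref{prop:eps-acyclic-carriers} to verify the two triangle identities. The shift carriers $I^{2\veps}_X$ and $I^{2\veps}_Y$ built into the definition of $\veps$-acyclic equivalence are exactly what makes the last step go through.

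First I would apply Proposition~\ref{prop:eps-acyclic-carriers} to the $\veps$-acyclic carrier $F^\veps \colon X_* \rightrightarrows Y[\veps]_*$; since $X_*$ and $Y_*$ are tame, this yields chain maps $f^\veps_a \colon C_*(X_a) \to C_*(Y_{a+\veps})$ carried by $F^\veps_a$ and satisfying $Y(a+\veps \leq b+\veps)\circ f^\veps_a = f^\veps_b \circ X(a \leq b)$ for all $a \leq b$. Taking homology, these assemble into a morphism of persistence modules $\Phi \colon \PH(X_*) \to \PH(Y_*)[\veps]$. Symmetrically, applying the proposition to $G^\veps \colon Y_* \rightrightarrows X[\veps]_*$ produces chain maps $g^\veps_a$ carried by $G^\veps_a$ and a morphism $\Psi \colon \PH(Y_*) \to \PH(X_*)[\veps]$. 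That $\Phi$ and $\Psi$ are natural in the filtration parameter is given verbatim by Proposition~\ref{prop:eps-acyclic-carriers}.

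Next I would check the triangle identities that make $(\Phi,\Psi)$ an $\veps$-interleaving. Fix $a \in \bbR$. The composite $g^\veps_{a+\veps}\circ f^\veps_a \colon C_*(X_a)\to C_*(X_{a+2\veps})$ is carried by the composition carrier $G^\veps\circ F^\veps$ at level $a$ (this is the observation made after Theorem~\ref{thm:acyclic-carrier}), and by hypothesis $G^\veps\circ F^\veps \subseteq I^{2\veps}_X$; on the other hand the chain map induced by $X(a\leq a+2\veps)$ is carried by $I^{2\veps}_X$ by the defining property of a shift carrier. Since $I^{2\veps}_X$ is an acyclic carrier, the equivalence part of Proposition~\ref{prop:eps-acyclic-carriers} (with $2\veps$ in place of $\veps$ and $X_*$ in both slots) supplies a chain homotopy between $g^\veps_{a+\veps}\circ f^\veps_a$ and the chain map of $X(a\leq a+2\veps)$. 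Passing to homology gives $\Psi_{a+\veps}\circ\Phi_a = \PH(X_*)(a\leq a+2\veps)$, i.e.\ the component at $a$ of $\Sigma^{2\veps}\PH(X_*)$; the identity $\Phi_{a+\veps}\circ\Psi_a = \PH(Y_*)(a\leq a+2\veps)$ follows in the same way from $F^\veps\circ G^\veps\subseteq I^{2\veps}_Y$. Hence $(\Phi,\Psi)$ is an $\veps$-interleaving between $\PH(X_*)$ and $\PH(Y_*)$.

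The only delicate point is that the chain-homotopy step must be carried out inside a single \emph{acyclic} carrier: the composition carrier $G^\veps\circ F^\veps$ is in general not acyclic (as emphasised after Theorem~\ref{thm:acyclic-carrier}), which is precisely why the definition of $\veps$-acyclic equivalence packages in the shift carriers and demands the inclusions $G^\veps\circ F^\veps\subseteq I^{2\veps}_X$ and $F^\veps\circ G^\veps\subseteq I^{2\veps}_Y$ — these let us compare each composite with the corresponding structure map. Note also that the triangle identities are required only pointwise in $a$, so no coherence of the chain homotopies across different filtration values is needed; this is what keeps the argument light.
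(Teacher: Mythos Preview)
Your proof is correct and follows essentially the same route as the paper's: apply Proposition~\ref{prop:eps-acyclic-carriers} to obtain compatible chain maps carried by $F^\veps$ and $G^\veps$, then use the inclusions into the shift carriers together with the equivalence clause of the same proposition to produce chain homotopies between the composites and the structure shifts. Your additional remarks on why the shift carriers are needed and why pointwise chain homotopies suffice are accurate elaborations of the same argument.
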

\begin{proof}
By Prop.~\ref{prop:eps-acyclic-carriers} we know that there exist two chain maps
$f^\veps_*\colon C_*(X_*) \rightarrow C_{*}(Y_{*+\veps})$ and
$g^\veps \colon C_*(Y_*) \rightarrow C_{*}(X_{*+\veps})$
carried by $F^\veps$ and $G^\veps$ respectively. By
hypothesis the compositions $g^\veps \circ f^\veps$ and $f^\veps \circ g^\veps$
are carried by corresponding shift carriers
$I_{X}^{2\veps}$ and~$I_{Y}^{2\veps}$.
Thus, using the second part of Prop.~\ref{prop:eps-acyclic-carriers} we
obtain chain homotopies
$g^\veps \circ f^\veps \simeq \Sigma^{2\veps}C_*(X)$ and
$f^\veps \circ g^\veps \simeq \Sigma^{2\veps}C_*(Y)$.
Altogether, in homology these compositions are equal to the corresponding shifts,
and $\PH_*(X_*)$ and $\PH_*(Y_*)$ are $\veps$-interleaved.
\end{proof}

\begin{exmp}
\label{ex:vietoris-rips-complexes}
Consider two finite metric spaces $\bX$ and $\bY$. Let $d_H(\bX,\bY)$ be their Hausdorff distance and set $\veps = 2d_H(\bX,\bY)$.
Given a subcomplex $K\subseteq \VR(\bX)$, we denote its vertex set by $\bX(K)\subseteq \bX$. Likewise for a simplex $\sigma \in \VR(\bX)$, we write $\bX(\sigma)\subseteq \bX$ for the vertices spanning $\sigma$.
Define a carrier $F^\veps:\VR(\bX) \rightrightarrows \VR(\bY)$ by mapping a simplex $\sigma \in \VR(\bX)_a$ to
\[
    F^\veps(\sigma) = |\sup\{K \subseteq \VR(\bY)_{a+\veps} \mid d_{\rm H}(\bX(\sigma),\bY(K))\leq \veps/2\}|
\]
This is clearly semicontinuous. If $v_0, \dots, v_n$ are vertices in $F^\veps(\sigma)$, then by definition $\{v_0, \dots, v_n\}$ is an $n$-simplex in $F^\veps(\sigma)$. Therefore we have $F^\veps(\sigma) \simeq \Delta^N$ for some $N \in \bN$, which is acyclic. In particular, $F^\veps$ is an $\veps$-acyclic carrier. Interchanging the roles of $\bX$ and $\bY$ we also obtain an $\veps$-acyclic carrier $G^\veps:\VR(\bY) \rightrightarrows \VR(\bX)$. Similarly, we
define for a simplex $\sigma \in \VR(\bX)_a$ the shift carrier
\[
    I^{2\veps}_\bX(\sigma) = |\sup\{K \subseteq \VR(\bX)_{a+2\veps} \mid d_{\rm H}(\bX(\sigma),\bX(K))\leq \veps\}|
\]
Analogously one defines $I^{2\veps}_\bY$. Since $G^\veps \circ F^\veps \subseteq I^{2\veps}_\bX$ and
$F^\veps \circ G^\veps \subseteq I^{2\veps}_\bY$, Prop.~\ref{prop:FCWcpx-veps-interleaving}
implies that $\PH_*(\VR(\bX))$ and $\PH_*(\VR(\bY))$ are $\veps$-interleaved.
This is similar to the proof using \emph{correspondences},
see~\citep[Prop.~7.8, Sec.~7.3]{Oudot2015}.
\end{exmp}

\begin{exmp}
\label{ex:cubical-complexes}
Consider $\bR^N$ together with a $1$-Lipschitz function $f:\bR^N \rightarrow \bR$
with constant $\veps>0$. On the other hand consider the lattices $\bZ^N$
and $r\bZ^N + l$ for a pair of vectors $r,l \in \bR^N$ such that the coordinates
of $r$ satisfy $0 < r_i \leq 1$ for all $1 \leq i \leq N$. Then we take their
corresponding cubical complexes $\cC(\bZ^N)$ and $\cC(r\bZ^N + l)$ thought as
embedded in $\bR^N$. The function $f$ induces a natural filtration for these
cubical complexes: a vertex $v \in \cC(\bZ^N)$ is contained in
$\cC(\bZ^N)_{f(v)}$, while a cell $a \in \cC(\bZ^N)$ appears at the maximum
filtration value on its vertices.
There is a $\veps$-acyclic carrier $F^\veps:\cC(\bZ^N)\rightrightarrows\cC(r\bZ^N + l)$
sending each cell $a \in \cC(\bZ^N)$ to the smallest subcomplex $F^\veps(a)$
containing all $b \in \cC(r\bZ^N + l)$ such that $\overline{b}\cap a \neq \emptyset$.
In an analogous way the inverse acyclic carrier can be defined, and the
compositions $F^\veps \circ G^\veps$ and $G^\veps \circ F^\veps$ define the
shift carriers.
Thus, using Proposition~\ref{prop:FCWcpx-veps-interleaving}, one shows that
$\PH_*(\cC(\bZ^N))$ and $\PH_*(\cC(r\bZ^N + l))$ are $\veps$-interleaved.
\end{exmp}

Here an important assumption of Proposition~\ref{prop:eps-acyclic-carriers} is
that we are dealing with tame filtered CW-complexes. However, what if we considered
instead a pair of elements $X_*, Y_* \in \RCWcpx$? In this context, we notice
that given an $\veps$-acyclic carrier $F^\veps:X_*\to Y_*[\veps]$, it is not
necessarily true that the compositions
\[
Y(a+\veps \leq b + \veps)F_a^\veps(c) \mbox{ and }
F_b^\veps(X(a\leq b)(c))
\]
are still acyclic for all pairs $a\leq b$ from $\bbR$. Thus, whenever we talk
about $\veps$-acyclic carriers $F^\veps:X_*\to Y_*[\veps]$ in this context we will assume that
$F_b^\veps(X(a\leq b)(c))$ is acyclic for all pairs $a,b \in \bbR$ with $a \leq b$
and all cells $c \in X(a)$.
\begin{lem}
\label{lem:RCWcpx-veps-interleaving}
Let $X_*, Y_*\in \RCWcpx$ be a pair of elements such that both are $\veps$-acyclic
equivalent in the above sense. Then $d_I(\PH_*(X_*), \PH_*(Y_*))\leq \veps$.
\end{lem}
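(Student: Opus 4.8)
The plan is to follow the proof of Proposition~\ref{prop:FCWcpx-veps-interleaving}, but since Proposition~\ref{prop:eps-acyclic-carriers} is not available here --- its inductive construction used that $X_a\subseteq X_b$ and that $X_*$ is tame --- I would work entirely on homology and never try to produce chain maps that commute with the structure maps on the nose; chain homotopies will be enough. Concretely, for each $a\in\bbR$ fix a chain map $f_a\colon C_*(X_a)\to C_*(Y_{a+\veps})$ carried by $F_a^\veps$, which exists by the existence part of Theorem~\ref{thm:acyclic-carrier} since $F_a^\veps$ is an acyclic carrier between objects of $\CWcpx$; likewise fix $g_a\colon C_*(Y_a)\to C_*(X_{a+\veps})$ carried by $G_a^\veps$. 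Write $\phi_a$ and $\psi_a$ for the maps induced on homology.

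The first step is to show that $\phi=\{\phi_a\}$ and $\psi=\{\psi_a\}$ assemble into morphisms of persistence modules $\phi\colon\PH_*(X_*)\to\PH_*(Y_*)[\veps]$ and $\psi\colon\PH_*(Y_*)\to\PH_*(X_*)[\veps]$. Given $a\leq b$, both $Y(a+\veps\leq b+\veps)\circ f_a$ and $f_b\circ X(a\leq b)$ are chain maps $C_*(X_a)\to C_*(Y_{b+\veps})$, and I claim both are carried by the single carrier $c\mapsto F_b^\veps(X(a\leq b)(c))$, which is acyclic by the standing assumption stated just before the lemma. Indeed, $f_b\circ X(a\leq b)$ sends a cell $c$ to a chain supported on $\bigcup_d F_b^\veps(d)$ with $d$ running over the cells of $\overline{X(a\leq b)(c)}$, and this union is by definition $F_b^\veps(X(a\leq b)(c))$; while $Y(a+\veps\leq b+\veps)\circ f_a$ sends $c$ to a chain supported on the subcomplex $\overline{Y(a+\veps\leq b+\veps)(F_a^\veps(c))}$, which is contained in $F_b^\veps(X(a\leq b)(c))$ by the defining compatibility condition of an $\veps$-acyclic carrier. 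By the equivalence part of Theorem~\ref{thm:acyclic-carrier} these two chain maps are chain homotopic, so they agree on homology, which is precisely the naturality square $Y(a+\veps\leq b+\veps)_*\circ\phi_a=\phi_b\circ X(a\leq b)_*$; the argument for $\psi$ is symmetric.

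Finally I would verify the two interleaving triangles. At level $a$ the map $(\psi[\veps]\circ\phi)_a=\psi_{a+\veps}\circ\phi_a$ is induced by the chain map $g_{a+\veps}\circ f_a\colon C_*(X_a)\to C_*(X_{a+2\veps})$, which is carried by the composition carrier $G_{a+\veps}^\veps\circ F_a^\veps$, and the latter is contained in the shift carrier $I_{X,a}^{2\veps}$ by hypothesis. Since $C_*(X(a\leq a+2\veps))$ is also carried by $I_{X,a}^{2\veps}$ --- this is the defining property of a shift carrier --- and $I_{X,a}^{2\veps}$ is acyclic, the equivalence part of Theorem~\ref{thm:acyclic-carrier} shows $g_{a+\veps}\circ f_a$ and $C_*(X(a\leq a+2\veps))$ are chain homotopic, so on homology $\psi_{a+\veps}\circ\phi_a=(\Sigma^{2\veps}\PH_*(X_*))_a$. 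The symmetric computation yields $\phi[\veps]\circ\psi=\Sigma^{2\veps}\PH_*(Y_*)$, so $(\phi,\psi)$ is an $\veps$-interleaving and hence $d_I(\PH_*(X_*),\PH_*(Y_*))\leq\veps$. I expect the only real obstacle to be the bookkeeping behind the claim that the two candidate chain maps share a common acyclic carrier: this is exactly where the assumption built into the $\RCWcpx$ notion of an $\veps$-acyclic carrier (that $F_b^\veps(X(a\leq b)(c))$ is acyclic), together with semicontinuity and the compatibility condition, is indispensable, and where losing the $\FCWcpx$ structure forces chain homotopies in place of strict commutation.
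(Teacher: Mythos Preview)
Your proposal is correct and follows essentially the same approach as the paper: produce $f_a$ and $g_a$ pointwise via Theorem~\ref{thm:acyclic-carrier}, then use the equivalence half of that theorem (with the standing acyclicity assumption on $F_b^\veps(X(a\leq b)(c))$) to obtain chain homotopies both for naturality in $a$ and for the interleaving triangles. If anything, your write-up is slightly more explicit than the paper's in verifying that the two candidate chain maps $Y(a+\veps\leq b+\veps)\circ f_a$ and $f_b\circ X(a\leq b)$ share the common acyclic carrier $c\mapsto F_b^\veps(X(a\leq b)(c))$.
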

\begin{proof}
For each persistence value $a \in \bbR$, we use Theorem~\ref{thm:acyclic-carrier} twice
to obtain a pair of chain morphisms $f_a:C^\cell_a(X)\rightarrow C^\cell_{a+\veps}(Y)$ and $g_{a+\veps}: C^\cell_{a+\veps}(Y)\rightarrow C^\cell_{a+2\veps}(X)$. In a similar way
we obtain a pair of chain homotopies $g_{a+\veps}\circ f_a \simeq (\Sigma^{2\veps} C^\cell_*(X))_a$
and $f_{a+\veps} \circ g_a \simeq (\Sigma^{2\veps}C^\cell_*(Y))_a$ so that
we have equalities between the induced homology morphisms
$[g_{a+\veps}] \circ [f_a] = [(\Sigma^{2\veps} C^\cell_*(X))_a]$
and $[f_{a+\veps}] \circ [g_a] = [(\Sigma^{2\veps}C^\cell_*(Y))_a]$
for all $a \in \bbR$. Now, for a pair
of values $a \leq b$ from $\bbR$, it is not necessarily true that
$Y(a+\veps \leq b + \veps)\circ f_a = f_{b}\circ X(a\leq b)$. However, since
$Y(a+\veps \leq b + \veps)\circ f_a$ and $f_{b} \circ X(a\leq b)$ are both
included in $F_b^\veps(X(a\leq b)(c))$ by hypotheses, then by applying
Theorem~\ref{thm:acyclic-carrier} again there is a chain homotopy equivalence
$Y(a+\veps \leq b + \veps)\circ f_a \simeq f_{b}\circ X(a\leq b)$, which implies
\[
[Y(a+\veps \leq b + \veps)]\circ [f_a] =  [f_{b}]\circ [X(a\leq b)]\ ,
\]
and we have defined a persistence morphism
$[f_*]:\PH_*(X_*)\rightarrow \PH_*(Y_*[\veps])$. Similarly we can also
put together the $g_a$ for all $a \in \bbR$ so that we obtain a morphism
$[g_*]:\PH_*(Y_*)\rightarrow \PH_*(X_*[\veps])$. This leads to the claimed $\veps$-interleaving.
\end{proof}
\begin{exmp}
Consider a point cloud $\bX$ and a Vietoris Rips complex $\VR_*(\bX)$ on top of it.
For this example, we will look at this applying an exponential transformation
$X_r = \VR_{\exp(r)}(\bX)$ for all $r \in \bbR$. We let $\cP_r$ be a partition of $\bX$ in such a way that for any partitioning set $U \in \cP_r$, for any pair of points
$p,q \in U$, we have that $d(p,q)\leq \exp(\veps)/2$. For a point $p \in \bX$, we will
denote by $\cP_r(p)$ the partitioning set from $\cP_r$ containing $p$. Then we define
a simplicial complex $Y_r$ over the set $\cP_r$, and where we include a simplex  $[\cP_r(p_0), \cP_r(p_1), \ldots, \cP_r(p_N)]$ whenever there exists a simplex  $[p_0, p_1, \ldots, p_N]$ in $\VR_{\exp(r)}(\bX)$; here we ignore degenerate simplices. It is easy to see that in general $Y_*$ is
not a filtered complex but rather a regularly filtered CW complex. We define the $\veps$-acyclic carrier $F^\veps_r:X_r\to Y_{r+\veps}$ by sending a simplex $[p_0, p_1, \ldots, p_N] \in X_r$ to the standard
simplex $\Delta^{[\cP_r(p_0), \cP_r(p_1), \ldots, \cP_r(p_N)]}$.
On the other way, we start from a simplex $[U_0, U_1, \ldots, U_M] \in Y_r$ and send it
to the subcomplex of $X_{r+\veps}$ given by the join $\Delta(U_0)*\Delta(U_1)*\cdots * \Delta(U_M)$, where by $\Delta(U_i)$ we mean the standard simplex with vertices on the elements from $U_i \in \cP_r$.
These assignements lead to a $\veps$-acyclic equivalence between $X_r$ and $Y_r$, where the shift carriers are given by composition.
Now, by Lemma~\ref{lem:RCWcpx-veps-interleaving} there is a
$\veps$-interleaving between $\PH_*(X)$ and $\PH_*(Y)$.
\end{exmp}

\begin{rem}
Notice that our notion of acyclicity is different from that in~\citep{Cavanna2019} and~\citep{GovSkr2018}.
In~\citep{GovSkr2018} a filtered complex $K_*$ is called $\veps$-acyclic whenever
the induced homology maps $H_*(K_r)\rightarrow H_*(K_{r+\veps})$ vanish for all
$r \in \bR$. In this case one can still (trivially) define acyclic carriers
between $*$ and $K_*$. The problem arises when  defining the shift carrier
$I_K^{A\veps}$ for some constant $A>0$, which does not exist in general.
One can however, adapt the proof of Proposition~\ref{prop:eps-acyclic-carriers} so that there is a chain morphism $\psi^{\veps(\dimn(K_r)+1)}:C_*(K_r)\to C_*(K_{r+\veps(\dimn(K_r)+1)})$; and that this coincides up to chain homotopy with the composition through $C_*(*)$. One does this by following the same proof as in Proposition~\ref{prop:eps-acyclic-carriers}, but increasing the filtration value by $\veps$ each time we assume that some cycle lies in an acyclic carrier. Thus, if we have $\dimn(K)= \sup_{r\in \bR}(\dimn(K_r)) < \infty$, then one could say that there is an \emph{$\veps(\dimn(K)+1)$-approximate} chain homotopy equivalence between $C(*)$ and $C(K_*)$.
\end{rem}

\section{Interleaving Geometric Realizations}
\label{sec:intBlowCpx}

Next, we focus on acyclic carrier equivalences between a pair of diagrams $\cD, \cL \in \RRDiag{K}$. We start by taking $\veps$-acyclic carriers $F^\veps_\sigma :\cD(\sigma) \rightrightarrows \cL(\sigma)$ for all $\sigma \in K$ which have to be compatible in the following sense: For any pair $\tau \preceq \sigma$ and any cell $c \in \cD(\sigma)$, there is an inclusion
\begin{equation}
\label{eq:compatible-inclusion}
\cL(\tau \preceq \sigma)(F^\veps_\sigma(c)) \subseteq
F^\veps_\tau(\cD(\tau \preceq \sigma)(c))
\end{equation}
and we assume in addition that $F^\veps_\tau(\cD(\tau \preceq \sigma)\Sigma^r\cD(\sigma)(c))$ is acyclic for all $r\geq 0$.
This compatibility leads to `local' diagrams of spaces. That is,
given a pair of values $a \in \bbR$ and $r \geq 0$ and a cell $c \in \cD(\sigma)_a$, we
consider an object $F^{r, \veps}_{\sigma \times c} \in \RDiag{\Delta^\sigma}$.
It is given by the space $F^{r, \veps}_{\sigma \times c}(\tau) = F_\tau^\veps
\big( \cD(\tau\preceq \sigma)\Sigma^r \cD(\sigma)(c)\big)$ for all faces $\tau \preceq \sigma$.
For any sequence $\rho \preceq \tau \preceq \sigma$ in $K$, there are morphisms
in $F^{r, \veps}_{\sigma \times c}$ given by restricting morphisms from $\cL$
$$
\begin{tikzcd}
\tau \arrow[r] &
F^{r, \veps}_{\sigma \times c}(\tau) \ar[d] \ar[r, equal] &
F^\veps_\tau\big(\cD(\tau \preceq \sigma)\Sigma^r \cD(\sigma)(c)\big)
\arrow[d, "\cL(\rho \preceq \tau)"] \\
\rho \arrow[r] \arrow[u, "\preceq"] &
F^{r, \veps}_{\sigma \times c}(\rho) \ar[r, equal] &
F^\veps_\rho\big(\cD(\rho \preceq \sigma)\Sigma^r \cD(\sigma)(c)\big)\ .
\end{tikzcd}
$$
Using condition~(\ref{eq:compatible-inclusion}) repeatedly on the
cells from $L = \cD(\tau \preceq \sigma)\Sigma^r \cD(\sigma)(c)$, we see that we have an inclusion
\[
\cL(\rho \preceq \tau)\big( F^\veps_\tau (L)\big) \subseteq
F^\veps_\sigma \big( \cD(\rho \preceq \tau)(L)\big)\ .
\]
Thus the diagram $F^{r, \veps}_{\sigma \times c}$ is indeed well defined, and we may consider the geometric realization $\Delta F^{r, \veps}_{\sigma \times c}$. By hypothesis
each $F^{r, \veps}_{\sigma \times c}(\tau)$ is acyclic for all $\tau \preceq \sigma$,
so that the first page of the spectral sequence
$E^*_{p,q}(F^{r, \veps}_{\sigma \times c})\Rightarrow \Ho_{p+q}(\Delta F^{r, \veps}_{\sigma \times c})$
is equal to
\[
E^1_{p,q}(F^{r, \veps}_{\sigma \times c}) =
\bigoplus_{\tau \in (\Delta^\sigma)^p} \Ho_q(F^{r, \veps}_{\sigma \times c}(\tau))
= \begin{cases}
\bigoplus_{\tau \in (\Delta^\sigma)^p} \bF \mbox{ if } q=0, \\
0 \mbox{ otherwise.}
\end{cases}
\]
where $\bF$ denotes our coefficient field for homology.
In fact, computing the homology with respect to the horizontal differentials on the
first page corresponds to computing the homology of $\Delta^\sigma$.
Thus, $E^2_{p,q}(F^{r, \veps}_{\sigma \times c})$ is zero everywhere except at
$p=q=0$ where it is equal to $\bF$. Thus, the spectral sequence collapses on
the second page, and $\Delta F^{r, \veps}_{\sigma \times c}$ is acyclic. We will
use the notation $F^\veps_{\sigma \times c} = F^{0, \veps}_{\sigma \times c}$.

\begin{defn}
Let $\cD$ and $\cL$ be two diagrams in $\RRDiag{K}$. Suppose that there are $\veps$-acyclic
carriers $F^\veps_\sigma:\cD(\sigma)\rightrightarrows \cL(\sigma)$ for all $\sigma \in K$, and such that
\[
\cL(\tau \preceq \sigma)\big( F^\veps_\sigma (c)\big) \subseteq
F^\veps_\sigma \big( \cD(\tau \preceq \sigma)(c)\big)\
\]
for all $c \in \cD(\sigma)$
and in addition $F^\veps_\tau(\cD(\tau \preceq \sigma)\Sigma^r\cD(\sigma)(c))$ is acyclic for all $r\geq 0$. Then we say that the set $\{F^\veps_\sigma\}_{\sigma \in K}$ is a $(\veps, K)$-acyclic carrier between $\cD$ and $\cL$.  We denote this by $F^\veps:\cD \rightrightarrows \cL$.
\end{defn}

\begin{thm}
\label{thm:geometric-realization}
Let $\cD$ and $\cL$ be two diagrams in $\RRDiag{K}$. Suppose that there are $(\veps, K)$-acyclic carriers $F^\veps:\cD\rightrightarrows \cL$ and $G^\veps:\cL\rightrightarrows \cD$, together with a pair of shift $(\veps, K)$-acyclic carriers $I^{2\veps}_\cD:\cD\rightrightarrows \cD$ and $I^{2\veps}_\cL:\cL\rightrightarrows \cL$, and such that these restrict to acyclic equivalences
\[
G^\veps_\tau \circ F^\veps_\tau \subseteq (I^{2\veps}_\cD)_\tau \mbox{ and } F^\veps_\tau \circ G^\veps_\tau \subseteq (I^{2\veps}_\cL)_\tau
\]
for each simplex $\tau \in K$.
Then there is an $\veps$-acyclic equivalence
$F^\veps:\Delta\cD\rightrightarrows \Delta\cL$ which preserves filtrations. That is,
there are $\veps$-acyclic equivalences $F^pF^\veps:F^p \Delta \cD \rightrightarrows F^p\Delta \cL$ for all $p\geq 0$.
\end{thm}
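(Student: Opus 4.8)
The plan is to build the carrier on the realizations cellwise out of the local diagrams $F^{r,\veps}_{\sigma\times c}$ that were set up just before the statement, and then to verify the axioms of an $\veps$-acyclic equivalence one at a time, reducing each to the corresponding property of the carriers $F^\veps_\sigma$. Concretely, for a cell $\sigma\times c$ of $(\Delta\cD)_a$ (so $\sigma\in K$ and $c\in\cD(\sigma)_a$) I would set
\[
(\Delta F^\veps)_a(\sigma\times c)\ \coloneqq\ \Delta F^{0,\veps}_{\sigma\times c}\ =\ \Delta F^\veps_{\sigma\times c}\ ,
\]
viewed as a subcomplex of $(\Delta\cL)_{a+\veps}$ via the inclusions $F^\veps_\tau(\cD(\tau\preceq\sigma)(c))\subseteq\cL(\tau)$, and I would extend this to chains by taking unions over (closures of) supports, as usual for carriers between objects of $\RCWcpx$. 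Each such value is nonempty and acyclic by the spectral-sequence computation already carried out (the double complex of $F^{0,\veps}_{\sigma\times c}$ has $E^2$-page concentrated in bidegree $(0,0)$, where it equals $\bF$).

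Next I would check that $\Delta F^\veps$ is an $\veps$-acyclic carrier. For semicontinuity it suffices to treat the generating relations of $\preceq$ on $\Delta\cD$: if $\tau\times b$ is a face of $\sigma\times c$ then $\tau\preceq\sigma$ and $\overline{b}\subseteq\overline{\cD(\tau\preceq\sigma)(c)}$, so applying the regular morphisms $\cD(\rho\preceq\tau)$, using functoriality of $\cD$ and the semicontinuity of each $F^\veps_\rho$, one obtains $F^\veps_\rho(\cD(\rho\preceq\tau)(b))\subseteq F^\veps_\rho(\cD(\rho\preceq\sigma)(c))$ for all $\rho\preceq\tau$, hence $\Delta F^\veps_{\tau\times b}\subseteq\Delta F^\veps_{\sigma\times c}$ (the relation $\sigma\times b\preceq\sigma\times c$ with $b\in\overline c\setminus c$ is handled identically). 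For the filtration-compatibility condition the key point is the identity
\[
(\Delta F^\veps)_b\big((\Delta\cD)(a\le b)(\sigma\times c)\big)\ =\ \Delta F^{\,b-a,\,\veps}_{\sigma\times c}\ ,
\]
which holds because the regular morphisms $\cD(\tau\preceq\sigma)$ send subcomplexes to subcomplexes and commute with passing to closures, while $\cD(\tau\preceq\sigma)\circ\cD(\sigma)(a\le b)=\cD(\tau)(a\le b)\circ\cD(\tau\preceq\sigma)$ by functoriality; combined with the filtration-compatibility of each $F^\veps_\tau\colon\cD(\tau)_*\rightrightarrows\cL(\tau)_{*+\veps}$ applied entrywise, this yields the required inclusion $(\Delta\cL)(a+\veps\le b+\veps)\,(\Delta F^\veps)_a(\sigma\times c)\subseteq(\Delta F^\veps)_b\big((\Delta\cD)(a\le b)(\sigma\times c)\big)$. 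Moreover, since $F^\veps_\tau\big(\cD(\tau\preceq\sigma)\Sigma^{r}\cD(\sigma)(c)\big)$ is acyclic for every $r\ge 0$ by hypothesis, the same identity shows the chain values $\Delta F^{\,b-a,\,\veps}_{\sigma\times c}$ are acyclic too (again by the collapsing spectral sequence), so $\Delta F^\veps$ is a legitimate $\veps$-acyclic carrier between objects of $\RCWcpx$. The same recipe produces the carriers $\Delta G^\veps\colon\Delta\cL\rightrightarrows\Delta\cD[\veps]$, $\Delta I^{2\veps}_\cD\colon\Delta\cD\rightrightarrows\Delta\cD[2\veps]$ and $\Delta I^{2\veps}_\cL\colon\Delta\cL\rightrightarrows\Delta\cL[2\veps]$.

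Then I would verify that $\Delta I^{2\veps}_\cD$ and $\Delta I^{2\veps}_\cL$ are shift carriers and that the required composition inclusions hold. For the first, on $\sigma\times c\in(\Delta\cD)_a$ the chain $\Sigma^{2\veps}(\Delta\cD)(\sigma\times c)=\sigma\times\cD(\sigma)(a\le a+2\veps)(c)$ is supported on cells $\sigma\times c'$ with $c'$ in the support of $\cD(\sigma)(a\le a+2\veps)(c)$, and since $(I^{2\veps}_\cD)_\sigma$ carries $\Sigma^{2\veps}\cD(\sigma)$ and $\cD(\sigma\preceq\sigma)=\Id$, each such $c'$ lies in $(I^{2\veps}_\cD)_\sigma(c)$, so $\sigma\times c'\in\Delta I^{2\veps}_{\cD,\,\sigma\times c}$, the remaining faces being caught by semicontinuity. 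For the composition, recall $(H\circ F)(x)=\bigcup_{y\in F(x)}H(y)$: a cell of $(\Delta G^\veps\circ\Delta F^\veps)(\sigma\times c)$ has the form $\rho\times b'$ with $\rho\preceq\tau\preceq\sigma$, $\tau\times a'\in\Delta F^\veps_{\sigma\times c}$ and $b'\in G^\veps_\rho(\cL(\rho\preceq\tau)(a'))$; using the compatibility~(\ref{eq:compatible-inclusion}) iterated along $\rho\preceq\tau\preceq\sigma$ one gets that $\cL(\rho\preceq\tau)(a')$ is supported in $F^\veps_\rho(\cD(\rho\preceq\sigma)(c))$, so semicontinuity of $G^\veps_\rho$ together with the hypothesis $G^\veps_\rho\circ F^\veps_\rho\subseteq(I^{2\veps}_\cD)_\rho$ gives $b'\in(I^{2\veps}_\cD)_\rho(\cD(\rho\preceq\sigma)(c))$, i.e.\ $\rho\times b'\in\Delta I^{2\veps}_{\cD,\,\sigma\times c}$. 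Hence $\Delta G^\veps\circ\Delta F^\veps\subseteq\Delta I^{2\veps}_\cD$ and, symmetrically, $\Delta F^\veps\circ\Delta G^\veps\subseteq\Delta I^{2\veps}_\cL$, so $\Delta\cD$ and $\Delta\cL$ are $\veps$-acyclic equivalent. Preservation of the skeletal filtration is then immediate: if $\sigma\times c\in F^p\Delta\cD$ then $\dim\sigma\le p$, and every cell $\tau\times a'$ of $\Delta F^\veps_{\sigma\times c}$ has $\tau\preceq\sigma$, hence $\dim\tau\le p$, so $\Delta F^\veps_{\sigma\times c}\subseteq F^p\Delta\cL$; the same holds for $\Delta G^\veps$, $\Delta I^{2\veps}_\cD$ and $\Delta I^{2\veps}_\cL$, so the whole equivalence restricts to $F^pF^\veps\colon F^p\Delta\cD\rightrightarrows F^p\Delta\cL[\veps]$ for every $p\ge 0$.

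The hard part will be the bookkeeping behind the identity $(\Delta F^\veps)_b\big((\Delta\cD)(a\le b)(\sigma\times c)\big)=\Delta F^{\,b-a,\,\veps}_{\sigma\times c}$ — and hence the acyclicity and filtration-compatibility of $\Delta F^\veps$ — which requires checking carefully that images and closures of cells behave as expected under the regular morphisms $\cD(\tau\preceq\sigma)$ and the filtration maps $\cD(\sigma)(a\le b)$, and that the relevant naturality squares commute; everything else is a formal consequence of the properties of the carriers $F^\veps_\sigma$ and of the collapsing-spectral-sequence argument established just before the statement.
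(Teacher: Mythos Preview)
Your proposal is correct and follows essentially the same route as the paper: define the carrier on $\Delta\cD$ by $\sigma\times c\mapsto\Delta F^\veps_{\sigma\times c}$, check semicontinuity via the face relations, use the identity $F^\veps(\Sigma^r\Delta\cD(\sigma\times c))=\Delta F^{r,\veps}_{\sigma\times c}$ (your displayed identity with $r=b-a$) for filtration compatibility and acyclicity, and then verify the composition inclusions cellwise using condition~(\ref{eq:compatible-inclusion}) and the local hypotheses $G^\veps_\rho\circ F^\veps_\rho\subseteq(I^{2\veps}_\cD)_\rho$. Your write-up is slightly more explicit about why $\Delta I^{2\veps}_\cD$ carries the shift, but otherwise the argument is the paper's.
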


\begin{proof}
Let $\sigma\times c \in \Delta\cD$ be a cell, where $c$ is an $m$-cell
in $\cD(\sigma)$. Define an acyclic carrier
$F^\veps: \Delta \cD \rightrightarrows \Delta \cL$ by sending
$\sigma \times c$ to the acyclic carrier $\Delta F^\veps_{\sigma\times c}$, which is a subcomplex of $\Delta\cL$. Let us first check semicontinuity. For any pair of cells
$\tau \times a \preceq \sigma\times c$ in $\Delta\cD$, the cell $a$ is contained in
the subcomplex $\overline{\cD(\tau \preceq \sigma)(c)}$, and by continuity of $\cD(\rho \preceq \tau)$
we have that $\cD(\rho \preceq \tau)(a) \subseteq \overline{\cD(\rho \preceq \sigma)(c)}$.
Thus there are inclusions
$$
F^\veps_{\rho}(\cD(\rho \preceq \tau)(a)) \subseteq
F^\veps_{\rho}(\overline{\cD(\rho \preceq \sigma)(c)}) =
F^\veps_{\rho}(\cD(\rho \preceq \sigma)(c))
$$
for all $\rho \preceq \tau$. More concisely,  $F_{\tau \times a}^\veps(\rho) \subseteq F_{\sigma \times c}^\veps(\rho) $ for all $\rho \preceq \tau$. As a consequence $\Delta F_{\tau \times a}^\veps \subseteq \Delta F_{\sigma \times c}^\veps$ and semicontinuity holds.

Next, notice that $F^\veps \big(\Sigma^r \Delta \cD (\sigma \times c)\big)=
F^\veps \big(\sigma \times \Sigma^r \cD(\sigma) (c)\big) =
\Delta F^{r, \veps}_{\sigma \times c}$ which is an acyclic carrier. In order for
$F^\veps$ to be an $\veps$-acyclic carrier, it remains to show the inclusion
$\Sigma^r \Delta \cL \circ F^\veps \subseteq F^\veps \circ \Sigma^r \Delta \cD$
for all $r \geq 0$. For this, take $\sigma \times c \in \Delta \cD$ and see that
\begin{multline}
\nonumber
\Sigma^r \Delta \cL \circ F^\veps (\sigma \times c)
= \Sigma^r \Delta \cL \bigg( \bigcup_{\tau \preceq \sigma} \tau \times F^\veps_\tau\big(\cD(\tau \preceq \sigma)(c)\big)\bigg) \\
= \bigcup_{\tau \preceq \sigma} \tau \times \Sigma^r \cL(\tau)\big( F^\veps_\tau\big(\cD(\tau \preceq \sigma)(c)\big)\big)
\subseteq \bigcup_{\tau \preceq \sigma} \tau \times F^\veps_\tau\big(\Sigma^r\cD(\tau)\cD(\tau \preceq \sigma)(c)\big) \\
= \bigcup_{\tau \preceq \sigma} \tau \times F^\veps_\tau\big(\cD(\tau \preceq \sigma)\Sigma^r\cD(\sigma)(c)\big)
= F^\veps\big(\sigma \times \Sigma^r \cD(\sigma)(c)\big) = F^\veps \circ \Sigma^r \Delta \cD (\sigma \times c)\ .
\end{multline}

Similarly, one can define an $\veps$-acyclic carrier $G^\veps: \Delta \cL \rightrightarrows \Delta \cD$ sending $\sigma \times c \in \Delta \cL$ to $\Delta G^\veps_{\sigma \times c}$.  In addition, we define respective shift $\veps$-acyclic carriers $I^{2\veps}_\cD:\Delta \cD \rightrightarrows \Delta \cD$ and $I^{2\veps}_\cL:\Delta \cL \rightrightarrows \Delta \cL$, sending respectively $\sigma \times c \in \Delta \cD$ to $\Delta (I^{2\veps}_\cD)_{\sigma \times c}$ and  $\tau \times a \in \Delta \cL$ to $\Delta (I^{2\veps}_\cL)_{\tau \times a}$.
Then we have
\begin{multline}
\nonumber
G^\veps \circ F^\veps (\sigma \times c) = G^\veps(\Delta F^\veps_{\sigma \times c}) =
G^\veps \Big(
\bigcup_{\tau \preceq \sigma}
\tau\times F^\veps_\tau\big(\cD(\tau \preceq  \sigma)(c)\big) \Big) \\
= \bigcup_{\rho \preceq  \tau \preceq \sigma} \rho \times
G^\veps_{\rho}\Big(\cL(\rho \preceq  \tau)
F_{\tau}^\veps \big(\cD(\tau \preceq \sigma)(c)\big)\Big)  \\
\subseteq \bigcup_{\rho\preceq \sigma} \rho \times
G^\veps_{\rho} F^\veps_{\rho} \big(\cD(\rho \preceq \sigma)(c)\big)
\subseteq \Delta (I^{2\veps}_\cD)_{\sigma \times c} = I^{2\veps}_\cD(\sigma \times c),
\end{multline}
where we have used the commutativity condition and equivalence of
$F^\veps_{\rho}$ and $G^\veps_{\rho}$.
Consequently $G^\veps \circ F^\veps \subseteq I^{2\veps}_\cD$;
the other inclusion $F^\veps \circ G^\veps \subseteq I^{2\veps}_\cL$
follows by symmetry. Altogether, we have obtained an $\veps$-equivalence
$F^\veps:\Delta \cD \rightrightarrows \Delta \cL$. Finally, notice that for all $p\geq 0$ and for each cell $\sigma \times c \in F^p\Delta \cD$, its carrier $\Delta F^\veps_{\sigma \times c}$
is contained in $F^p\Delta \cD$ and so it preserves filtration.
The same follows for the other acyclic carriers.
\end{proof}

Let $X$ be a filtered simplicial complex, together with a cover $\cU$ by filtered
subcomplexes. Recall the definitions of the diagrams $X^\cU$ and $\pi_0^\cU$
over $N_\cU$ from example~\ref{ex:cover-spaces}. Consider the case when
$d_I\big(\PH_*(X^\cU(\sigma)), \PH_*(\pi^\cU_0(\sigma))\big) \leq \veps$ for all
$\sigma \in N_\cU$. This example has been of interest before, see for
example~\citep{GovSkr2018} or~\citep{Cavanna2019}. As mentioned in the remark at the end of Section~\ref{sec:eps-carriers}, our notion of $\veps$-acyclicity is much stronger than that from~\citep{GovSkr2018}. This is why we obtain a result closer to the \emph{Persistence Nerve Theorem} from~\citep{ChazOudo2008} than to the \emph{Approximate Nerve Theorem} from~\citep{GovSkr2018}.

\begin{cor}[Strong Approximate Multinerve Theorem]
\label{thm:strong-multinerve}
Consider a diagram $\cD$ in $\FRDiag{K}$.
Assume that there are is a $(\veps, K)$-acyclic carrier $F^\veps: \pi_0 \cD \rightrightarrows \cD$.
Then, there is a $\veps$-acyclic equivalence
$F^\veps:\MN(\cD) \rightrightarrows \Delta \cD$.
Consequently,
$$
d_I(\PH_*(\MN(\cD)), \PH_*(\Delta \cD)) \leq \veps\ .
$$
\end{cor}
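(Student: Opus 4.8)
The plan is to deduce this from Theorem~\ref{thm:geometric-realization}, applied with $\pi_0\cD$ playing the role of $\cD$ and $\cD$ itself playing the role of $\cL$. Since $\MN(\cD)=\Delta(\pi_0\cD)$, such an application outputs a filtration-preserving $\veps$-acyclic equivalence $F^\veps\colon\MN(\cD)\rightrightarrows\Delta\cD$, which is the first assertion; and then, because $\MN(\cD)$ and $\Delta\cD$ both lie in $\RCWcpx$ (the structure maps of $\pi_0\cD$ across different simplices and across filtration values need not be injective), Lemma~\ref{lem:RCWcpx-veps-interleaving} yields $d_I(\PH_*(\MN(\cD)),\PH_*(\Delta\cD))\le\veps$. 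So the work is to supply Theorem~\ref{thm:geometric-realization} with its data: the forward carrier $F^\veps\colon\pi_0\cD\rightrightarrows\cD$ is the hypothesis, and what is missing is a reverse $(\veps,K)$-acyclic carrier, a pair of shift $(\veps,K)$-acyclic carriers, and the two local equivalence inclusions.

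For the reverse carrier I would use the canonical collapse. For each $\sigma\in K$ there is a regular morphism $q_\sigma\colon\cD(\sigma)\to\pi_0(\cD(\sigma))$ sending every cell to the single point of its path component, and by functoriality of $\pi_0$ the $q_\sigma$ assemble into a morphism of diagrams $q\colon\cD\to\pi_0\cD$. Post-composing with the standard shift and passing to induced carriers yields carriers $G^\veps_\sigma\colon\cD(\sigma)\rightrightarrows\pi_0\cD(\sigma)[\veps]$ whose value on a cell is a single point, hence acyclic; semicontinuity, compatibility with the internal filtration, and the cross-simplicial compatibility~\eqref{eq:compatible-inclusion} are immediate from naturality of $q$, and the extra acyclicity requirement holds because the closure of the image of a cell is connected and thus collapses to one point. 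Hence $G^\veps\colon\cD\rightrightarrows\pi_0\cD$ is a $(\veps,K)$-acyclic carrier.

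It then remains to choose shift carriers $I^{2\veps}_{\pi_0\cD}$ and $I^{2\veps}_\cD$ and to verify $G^\veps_\tau\circ F^\veps_\tau\subseteq(I^{2\veps}_{\pi_0\cD})_\tau$ and $F^\veps_\tau\circ G^\veps_\tau\subseteq(I^{2\veps}_\cD)_\tau$ for every $\tau\in K$. On the $\pi_0$ side the diagram is $0$-dimensional, so I would take $(I^{2\veps}_{\pi_0\cD})_\tau$ to be the carrier induced by the shift $\Sigma^{2\veps}\pi_0\cD(\tau)$, whose values are single points; since $F^\veps_\tau(c)$ is acyclic, hence connected, $G^\veps_\tau\bigl(F^\veps_\tau(c)\bigr)$ is again a single point, and the inclusion reduces to identifying that point with $\pi_0\cD(\tau)(a\le a+2\veps)(c)$. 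On the $\cD$ side I would build $(I^{2\veps}_\cD)_\tau$ starting from $F^\veps_\tau\circ G^\veps_\tau$, which is acyclic since it is $F^\veps_\tau$ evaluated on a single point, and enlarge it, if necessary, to an acyclic carrier that also contains the cell $c$ pushed to level $a+2\veps$, so that it carries $\Sigma^{2\veps}\cD(\tau)$. Granting these local equivalences, Theorem~\ref{thm:geometric-realization} delivers the $\veps$-acyclic equivalence and Lemma~\ref{lem:RCWcpx-veps-interleaving} the interleaving bound.

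I expect the $\cD$-side local equivalence to be the main obstacle: one has to show that $F^\veps_\tau$ composed with the collapse is absorbed into an \emph{acyclic} shift carrier, i.e.\ that the filling subcomplex $F^\veps_\tau\bigl(\{[c]_{a+\veps}\}\bigr)\subseteq\cD(\tau)_{a+2\veps}$ sits inside a sufficiently contractible neighbourhood of the cell $c$. Controlling this should require combining the acyclicity (hence connectivity) of the $F^\veps$-images with the internal-filtration and cross-simplicial compatibility conditions packaged into a $(\veps,K)$-acyclic carrier; by contrast, the $\pi_0$-side inclusion ought to be soft, using only connectivity of acyclic subcomplexes.
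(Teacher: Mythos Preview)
Your approach is essentially the paper's: apply Theorem~\ref{thm:geometric-realization} with the collapse $G^\veps\colon\cD\rightrightarrows\pi_0\cD$ as the reverse carrier and the trivial point-to-point assignment as $I^{2\veps}_{\pi_0\cD}$, then conclude the interleaving bound via Lemma~\ref{lem:RCWcpx-veps-interleaving}. The one place you diverge is the $\cD$-side shift carrier: the paper simply \emph{defines} $I^{2\veps}_\cD\coloneqq F^\veps\circ G^\veps$ and asserts that this composition ``can be checked to define a $(2\veps,K)$-acyclic carrier'', with no enlarging step and no further argument. So the obstacle you flag---whether $F^\veps_\tau\circ G^\veps_\tau$ actually carries $\Sigma^{2\veps}\cD(\tau)$---is not resolved in the paper's proof either; it is absorbed into that unchecked phrase, and your proposed enlargement is not part of the paper's argument.
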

\begin{proof}
Notice that $\pi_0\cD$ is a well defined element from $\RRDiag{K}$ and there is an obvious choice for the $(\veps,K)$-acyclic carrier $G^\veps: \cD \rightrightarrows \pi_0 \cD$ where we send cells to their corresponding connected component classes.
The compatibility condition
$$
\pi_0 (\cD(\tau \preceq \sigma))(G^\veps(\cD(\sigma)))
\subseteq G^\veps(\cD(\tau))
$$
also follows.
The shift $(2\veps,K)$-carrier $I^{2\veps}_{\pi_0\cD}$ sends points to points, while the other $I^{2\veps}_{\cD}$ is defined as the composition $F^\veps \circ G^\veps$, which can be checked to define a $(2\veps,K)$-acyclic carrier.
Altogether, we can use
Proposition~\ref{thm:geometric-realization} and there exists a $\veps$-acyclic
equivalence $F^\veps:\MN(\cD)\rightrightarrows \Delta(\cD)$.
\end{proof}

\begin{exmp}
Consider a point cloud $\bX$ together with a partition $\cP$. Assume that the
$(\VR_*(\bX), \cP)$-join diagram $\cJ^{\VR_*(\bX)}_\cP$ is such that there are
compatible $\veps$-acyclic equivalences $\pi_0(\cJ^{\VR_*(\bX)}_\cP(\sigma))
\rightrightarrows \cJ^{\VR_*(\bX)}_\cP(\sigma)$ for all $\sigma \in \Delta^{P}$.
Then there is an $\veps$-acyclic equivalence
$\Delta\pi_0\big(\cJ^{\VR_*(\bX)}_\cP\big)\rightrightarrows \Delta\cJ^{\VR_*(\bX)}_\cP$
so that
\[
d_I\big(\PH_*(\MN(\cJ^{\VR_*(\bX)}_\cP)), \PH_*(\VR_*(\bX))\big)\leq \veps \ .
\]
\end{exmp}

Acyclic carriers have been used in~\citep{Kaczynski2004} and in~\citep{Nanda2012} for
approximating continuous morphisms by means of simplicial maps. Here we have
used the same tools to obtain an approximate homotopy colimit theorem.
The acyclic carrier theorem is an instance of the more general acyclic Model theorem.
An interesting future research direction would be to see how that general result can bring
new insights into applied topology.

\section{Interleaving Spectral Sequences}
\label{sec:ss-interleavings}
\begin{defn}
Let $\cA$ and $\cB$ from $\SpSq$. A $n$-spectral sequence morphism $f:\cA \to \cB$ is a spectral
sequence morphism $f:\cA \rightarrow \cB$ which is defined from page~$n$.
\end{defn}
\begin{defn}
Given two objects $\cA$ and $\cB$ in \PSpSq.
We say that $\cA$ and $\cB$ are $(\varepsilon, n)$-interleaved whenever there exist
two $n$-morphisms $\psi : \cA \rightarrow \cB [\varepsilon]$ and
$\varphi : \cB \rightarrow \cA [\varepsilon]$ such that the following diagram commutes

\begin{equation}
\label{diag:interleaving}
\begin{tikzcd}
\cA \ar[dr, "\psi", near start, swap] \ar[d, "\Sigma^\veps \cA", swap] &
\cB \ar[dl, "\varphi", near start, crossing over] \ar[d, "\Sigma^\veps \cB"] \\
\cA {[\varepsilon]} \ar[dr, "\psi {[\varepsilon]}", swap, near start]
	\ar[d, "\Sigma^\veps \cA {[\varepsilon]}", swap] &
\cB {[\varepsilon]} \ar[dl, "\varphi {[\varepsilon]}", near start, crossing over]
	\ar[d, "\Sigma^\veps \cB {[\varepsilon]}"] \\
\cA {[2\varepsilon]} & \cB {[2\varepsilon]}
\end{tikzcd}
\end{equation}
for all pages $r \geq n$.
This interleaving defines a pseudometric in \PSpSq
$$
d_I^n\big( \cA, \cB \big) \coloneqq
\inf \left\{ \varepsilon \mid \mbox{$\cA$ and $\cB$ are $(\varepsilon, n)$-interleaved}\right\}\ .
$$
\end{defn}

\begin{prop}
\label{prop:inequalities-pages}
Suppose that $\cA$ and $\cB$ are $(\varepsilon, n)$-interleaved. Then these are
$(\varepsilon, m)$-interleaved for all $m \geq n$. In particular, we have that
$$
d_I^m \big( \cA, \cB \big) \leq d_I^n \big( \cA, \cB\big)
$$
for any pair of integers $m \geq n$.
\end{prop}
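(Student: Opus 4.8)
The plan is to observe that the data witnessing an $(\veps, n)$-interleaving \emph{is}, verbatim, data witnessing an $(\veps, m)$-interleaving as soon as $m \geq n$; so the only things to check are that the notion of $n$-spectral sequence morphism is monotone in $n$, and that the commutativity built into diagram~(\ref{diag:interleaving}) was already imposed on all pages $r \geq n$.

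First I would recall that a spectral sequence morphism satisfies $f^{r+1} = H(f^r)$. Hence a morphism that is defined from page $n$ determines, by iterated application of the homology functor, a compatible family $\{f^r\}_{r \geq n}$ commuting with the differentials $d_r$; in particular it is \emph{a fortiori} defined from page $m$ for every $m \geq n$. Applying this to the hypothesis, the $n$-morphisms $\psi : \cA \to \cB[\veps]$ and $\varphi : \cB \to \cA[\veps]$ given by the $(\veps,n)$-interleaving are in particular $m$-morphisms, and the same holds for their shifts $\psi[\veps]$, $\varphi[\veps]$, since shifting by a persistence parameter does not affect the page at which a morphism is defined.

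Next I would transport the commutativity of~(\ref{diag:interleaving}). By definition of an $(\veps,n)$-interleaving, every triangle and square in~(\ref{diag:interleaving}) commutes for all pages $r \geq n$; since $\{r \mid r \geq m\} \subseteq \{r \mid r \geq n\}$, the same diagram commutes for all pages $r \geq m$. Therefore $\psi$ and $\varphi$ exhibit $\cA$ and $\cB$ as $(\veps, m)$-interleaved, which is the first assertion. The inequality is then a formal consequence: for every $\veps$ such that $\cA$ and $\cB$ are $(\veps,n)$-interleaved they are also $(\veps,m)$-interleaved, so
\[
\{\veps \mid \cA, \cB \text{ are } (\veps,n)\text{-interleaved}\} \subseteq \{\veps \mid \cA, \cB \text{ are } (\veps,m)\text{-interleaved}\},
\]
and passing to infima gives $d_I^m(\cA,\cB) \leq d_I^n(\cA,\cB)$.

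I do not anticipate a real obstacle here; the single point that deserves a sentence of justification is the monotonicity of ``$n$-morphism'' in $n$, which is exactly the relation $f^{r+1} = H(f^r)$, and everything else is bookkeeping with the definitions and with the inclusion of index ranges.
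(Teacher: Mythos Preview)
Your proposal is correct and matches the paper's approach: the paper simply writes ``Follows directly from the definitions,'' and what you have done is spell out exactly that content --- the monotonicity of $n$-morphisms via $f^{r+1}=H(f^r)$, the inclusion of page ranges $\{r\geq m\}\subseteq\{r\geq n\}$, and passage to infima.
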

\begin{proof}
Follows directly from the definitions.
\end{proof}

We start now by considering Mayer-Vietoris spectral sequences. Under some
conditions which are a special case of Theorem~\ref{thm:geometric-realization},
one can obtain one page stability. In fact this stability is due to morphisms
directly defined on the underlying double complexes, which is a very
strong property.

\begin{prop}
\label{prop:stability-ss}
Let $(X, \cU)$ and $(Y, \cV)$ be two tame elements in $\FCWcpx$ together with covers by
subcomplexes, both having the same finite nerve $K=N_\cU=N_\cV$.
Suppose that there are $(\veps, K)$-acyclic carriers $F^\veps:X^\cU\rightrightarrows Y^\cV$ and $G^\veps:Y^\cV\rightrightarrows X^\cU$, together with a pair of shift $(\veps, K)$-acyclic carriers $I^{2\veps}_{X^\cU}:X^\cU\rightrightarrows X^\cU$ and $I^{2\veps}_{Y^\cV}:Y^\cV\rightrightarrows Y^\cV$, and such that these restrict to acyclic equivalences
\[
G^\veps_\tau \circ F^\veps_\tau \subseteq (I^{2\veps}_{X^\cU})_\tau \mbox{ and } F^\veps_\tau \circ G^\veps_\tau \subseteq (I^{2\veps}_{Y^\cV})_\tau
\]
for each simplex $\tau \in K$.
Then there are a pair of double complex morphisms $\phi^\veps:C_{*,*}(X, \cU)\to C_{*,*}(Y, \cV)[\veps]$
and $\psi^\veps:C_{*,*}(Y, \cV)\to C_{*,*}(X, \cU)[\veps]$ inducing a first page interleaving
between $E^*_{*, *}(X, \cU)$ and $E^*_{*, *}(Y, \cV))$.
\end{prop}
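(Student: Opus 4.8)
The plan is to mimic the constructions behind Theorem~\ref{thm:geometric-realization}, but to carry them out at the level of the double complexes and to keep track of the bigrading, so that the resulting chain maps are honestly \emph{bigraded} rather than merely filtration preserving. Concretely, I would first build, for every simplex $\sigma \in K$, a persistence chain map $\phi^\veps_\sigma \colon C_*(X^\cU(\sigma)) \to C_*(Y^\cV(\sigma))[\veps]$ carried by $F^\veps_\sigma$ and compatible with the \v{C}ech restrictions, that is $Y^\cV(\tau\preceq\sigma)^*\circ\phi^\veps_\sigma=\phi^\veps_\tau\circ X^\cU(\tau\preceq\sigma)^*$ for all $\tau\preceq\sigma$ in $K$; running the same argument with $G^\veps$ in place of $F^\veps$ produces $\psi^\veps_\sigma$ carried by $G^\veps_\sigma$. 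Since $X^\cU,Y^\cV\in\FFDiag{K}$, all the structure maps appearing here are inclusions of subcomplexes, so the compatibility condition just says that $\phi^\veps_\tau$ extends $\phi^\veps_\sigma$ whenever $\tau\preceq\sigma$.

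To construct the family $\{\phi^\veps_\sigma\}$ I would induct on $\dim\sigma$ in \emph{decreasing} order, running inside each step the filtration-and-skeleton induction from the proof of Proposition~\ref{prop:eps-acyclic-carriers}. When $\sigma$ is reached, all $\phi^\veps_{\sigma'}$ with $\sigma\prec\sigma'$ are already available, and they prescribe $\phi^\veps_\sigma$ on the subcomplex $\bigcup_{\sigma\prec\sigma'}X^\cU(\sigma')\subseteq X^\cU(\sigma)$. This prescription is unambiguous: if a cell $c$ lies in $X^\cU(\sigma_1)$ and in $X^\cU(\sigma_2)$ with $\sigma_1,\sigma_2\succ\sigma$, then $c\in\bigcap_{i\in\sigma_1\cup\sigma_2}U_i$, hence $\sigma_1\cup\sigma_2\in N_\cU=K$ and $\sigma_1\cup\sigma_2\succ\sigma_1,\sigma_2$, so by the inductive compatibility both $\phi^\veps_{\sigma_1}$ and $\phi^\veps_{\sigma_2}$ restrict to $\phi^\veps_{\sigma_1\cup\sigma_2}$ on $X^\cU(\sigma_1\cup\sigma_2)$ and therefore agree at $c$. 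Moreover the prescription is carried by $F^\veps_\sigma$ there, because applying the compatibility condition~(\ref{eq:compatible-inclusion}) to the face inclusion $\sigma\preceq\sigma'$ gives $Y^\cV(\sigma\preceq\sigma')\big(F^\veps_{\sigma'}(c)\big)\subseteq F^\veps_\sigma(c)$. The remaining cells of $C_*(X^\cU(\sigma))$ are then filled in exactly as in Proposition~\ref{prop:eps-acyclic-carriers}: choose values on the new $0$-cells inside the (acyclic) sets $F^\veps_\sigma(c)$, lift boundaries using acyclicity of $F^\veps_\sigma(c)$ and of its shifts $F^\veps_\sigma\big(\Sigma^r X^\cU(\sigma)(c)\big)$, and do so simultaneously over increasing filtration values using tameness of $X^\cU(\sigma)\in\FCWcpx$. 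Compatibility of $\phi^\veps_\sigma$ with the \v{C}ech restrictions onto faces of $\sigma$ then holds by construction.

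With the two families in hand, I define $\phi^\veps\colon C_{p,q}(X,\cU)\to C_{p,q}(Y,\cV)[\veps]$ to act on the summand $C_q(X^\cU(\sigma))$, $\sigma\in K^p$, by $\phi^\veps_\sigma$ into the summand $C_q(Y^\cV(\sigma))$, and let $\psi^\veps$ be built from the $\psi^\veps_\sigma$ in the same way. Because each $\phi^\veps_\sigma$ is a chain map, $\phi^\veps$ commutes with $d^V$ (the sign $(-1)^p$ is preserved since $p$ is); because the $\phi^\veps_\sigma$ are compatible with restrictions, $\phi^\veps$ commutes with the \v{C}ech differential $d^H$; and since $\phi^\veps$ preserves $p$ it preserves the vertical filtration $F^mS^\Tot$. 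Hence $\phi^\veps$ (and likewise $\psi^\veps$) is a morphism of persistence double complexes and, by~\citep[Thm.~3.5]{McCleary2001}, induces a morphism of the associated persistence spectral sequences $E^*_{p,q}(X,\cU)\to E^*_{p,q}(Y,\cV)[\veps]$. For the interleaving, observe that $\psi^\veps\circ\phi^\veps$ is again a morphism of double complexes, diagonal with $\sigma$-component $\psi^\veps_\sigma\circ\phi^\veps_\sigma$ carried by $G^\veps_\sigma\circ F^\veps_\sigma\subseteq(I^{2\veps}_{X^\cU})_\sigma$, while $\Sigma^{2\veps}C_{*,*}(X,\cU)$ is diagonal with $\sigma$-component the chain map induced by $\Sigma^{2\veps}X^\cU(\sigma)$, which is carried by the same shift carrier $(I^{2\veps}_{X^\cU})_\sigma$. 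The equivalence part of Proposition~\ref{prop:eps-acyclic-carriers} then yields, for each $\sigma$, a chain homotopy $\psi^\veps_\sigma\circ\phi^\veps_\sigma\simeq\Sigma^{2\veps}C_*(X^\cU(\sigma))$; passing to vertical homology shows that $\psi^\veps\circ\phi^\veps$ and $\Sigma^{2\veps}$ induce the same map on $E^1_{p,q}(X,\cU)=\bigoplus_{\sigma\in K^p}\Ho_q(X^\cU(\sigma))$, and by functoriality of $E^r\mapsto E^{r+1}$ they induce the same map $\Sigma^{2\veps}E^r_{p,q}(X,\cU)$ on every page $r\geq 1$. By symmetry $\phi^\veps\circ\psi^\veps$ induces $\Sigma^{2\veps}E^r_{p,q}(Y,\cV)$ for $r\geq 1$, which is exactly the commutativity of diagram~(\ref{diag:interleaving}) from page one onwards; hence the induced morphisms form a first-page interleaving between $E^*_{p,q}(X,\cU)$ and $E^*_{p,q}(Y,\cV)$.

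I expect the main obstacle to be the construction in the second paragraph: arranging the Acyclic Carrier extension of Proposition~\ref{prop:eps-acyclic-carriers} to run coherently over the whole poset $K$ together with the filtration parameter, and in particular the gluing of the prescriptions coming from the different cofaces of $\sigma$. This is precisely where one uses that $K$ is the nerve of a cover — so that $K$ is closed under unions of simplices with non-empty common intersection — and that the structure maps of $X^\cU$ and $Y^\cV$ are genuine subcomplex inclusions; for an arbitrary diagram over $K$ the $\phi^\veps_\sigma$ would not patch to a single diagonal assignment, and one would recover only the weaker, merely filtration-preserving conclusion of Theorem~\ref{thm:geometric-realization}.
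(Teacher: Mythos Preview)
Your proposal is correct and follows essentially the same approach as the paper: a decreasing induction on $\dim\sigma$ to build compatible chain maps $\phi^\veps_\sigma$ carried by $F^\veps_\sigma$, assembling them into a diagonal double-complex morphism, and then invoking the homotopy part of Proposition~\ref{prop:eps-acyclic-carriers} on each summand to obtain the first-page interleaving. Your consistency argument via $\sigma_1\cup\sigma_2\in N_\cU$ is the same as the paper's appeal to the unique maximal simplex containing a given cell, and your closing remark on why this requires $K$ to be a nerve and the structure maps to be inclusions matches the paper's own observation after the proof.
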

\begin{proof}
Unpacking the definitions this means we have to give chain homomorphisms
\begin{align*}
    (\phi_\sigma^\veps)_r & \colon C_*(X^\cU(\sigma)_r) \to C_*(Y^\cV(\sigma)_{r+\veps}) \ ,\\
    (\psi_\sigma^\veps)_r & \colon C_*(Y^\cV(\sigma)_r) \to C_*(X^\cU(\sigma)_{r+\veps})
\end{align*}
that are natural in $\sigma \in K$ and in $r \in \bbR$. Since $K$ is a poset
category, these can be constructed inductively as follows: As in
Prop.~\ref{prop:eps-acyclic-carriers} we may define $\phi_\sigma^\veps$ on all
simplices $\sigma \in K$ of dimension $\dimnp{\sigma} = \dimnp{K}$. Note that
$(\phi_\sigma^\veps)_r$ is carried by $(F^\veps_\sigma)_r$ for all $r \in \bbR$.
Assume by induction that $\phi_\tau^\veps$ are defined and carried by
$F^\veps_\tau$ for all $\tau \in K$ with $n \leq \dimnp{\tau} \leq \dimnp{K}$ in
such a way that for all cofaces $\tau \preceq \sigma$ the naturality condition
$\phi_\tau^\veps \circ X^\cU(\tau \prec \sigma) = Y^\cV(\tau \prec \sigma)[\veps]
\circ \phi_\sigma^\veps$ holds. Now let $\tau \in K$ have dimension
$\dimnp{\tau} = n-1 \geq 0$. The naturality condition on the simplices fixes
$\phi_\tau^\veps$ on the filtered subcomplex $X^\tau =
\bigcup_{\tau \prec \sigma} \img \big(X^\cU(\tau \prec \sigma)\big)$, where the
union is taken over all cofaces $\sigma$ of $\tau$. Here notice that we can
assume that $\phi^\veps_\tau$ is well defined since the previous choices of
$\phi^\veps_\sigma$ for all cofaces $\tau \prec \sigma$ are consistent due to
the fact that for each cell $c \in X_\tau$ there exists a unique maximal simplex $\sigma \in N_\cU$
such that $c \in X_\sigma$.
In addition, notice that by hypotheses $Y^\cV(\tau \prec \sigma)((F_\sigma^\veps)(c))
\subseteq F_\tau^\veps(X^\cU(\tau \prec \sigma)(c))$ for all $a \in \bbR$ and
$c \in X^\cU(\sigma)$, so that our definition of $\phi^\veps_\tau$ in $X^\tau$ is
indeed carried by $F^\veps_\tau$. We then proceed as in Prop.~\ref{prop:eps-acyclic-carriers}
to define $(\phi_\tau^\veps)_a$ on all simplices in the subset $X^\cU(\tau)_a
\setminus X^\tau_a$ for all $a\in \bbR$. The resulting chain map
$(\phi_{\tau}^\veps)_a$ is carried by $(F^\veps_\tau)_a$ for all $a \in \bbR$.
Since $X^\cU$ is tame, we only need finitely many steps to obtain a morphism
$\phi_\tau^\veps: C_*(X^\cU(\tau)) \rightarrow C_*(Y^\cV(\tau)[\veps])$ that
satisfies the induction hypotheses.

Thus, we obtain double complex morphisms $\phi_{p,q}^\veps:C_{p,q}(X, \cU)
\to C_{p,q}(Y, \cV)[\veps]$ for all $p,q \geq 0$ by adding up our defined
local morphisms
\[
\phi_{p,q}^\veps: \bigoplus_{\sigma \in K^p} \phi^\veps_\sigma :
\bigoplus_{\sigma \in K^p} C_q(X^\cU(\sigma)) \longrightarrow
\bigoplus_{\sigma \in K^p} C_q(Y^\cV(\sigma))[\veps]\ .
\]
Notice that $\phi_{p,q}^\veps$ commute both with horizontal
and vertical differentials since we assumed that each $\phi^\veps_\sigma$ is a
chain morphism and these satisfy a naturality condition with respect to $K$. Thus,
this double complex morphism induces a spectral sequence morphism
$\phi^\veps_{p,q}:E^*_{p,q}(X^\cU)\to E^*_{p,q}(Y^\cV)[\veps]$. By doing the same
construction, we can obtain local chain morphisms
$\psi_\sigma^\veps:C_*(Y^\cV(\sigma))\to C_*(X^\cU(\sigma))[\veps]$ so that
by Proposition~\ref{prop:eps-acyclic-carriers}  we have equalities
$[\psi_\sigma^\veps] \circ [\phi_\sigma^\veps] = [\Sigma^{2\veps}C_*(X^\cU(\sigma))]$
and also
$[\phi_\sigma^\veps] \circ [\psi_\sigma^\veps] = [\Sigma^{2\veps}C_*(Y^\cV(\sigma))]$
for all $\sigma \in K$.
Then we can construct a double complex morphism $\psi_{p,q}^\veps:C_{p,q}(Y, \cV)\to C_{p,q}(X, \cU)[\veps]$
inducing an ``inverse'' spectral sequence morphism $\psi^\veps_{p,q}:E_{p,q}^*(Y, \cV)\to E_{p,q}^*(X, \cU)[\veps]$.
These are such that from the first page, $\phi^\veps_{*,*}$ and $\psi^\veps_{*,*}$
form a $(\veps, 1)$-interleaving of spectral sequences.
\end{proof}

Notice that the proof of Proposition~\ref{prop:stability-ss} relies heavily on the fact
that the diagrams we are considering come from a cover. This allows us to define a pair of double complex morphisms that are compatible along the common indexing nerve.
However, in Theorem~\ref{thm:geometric-realization} we observed that, under some conditions, the geometric realizations of regularly filtered regular diagrams are stable. Does this stability carry over to the associated spectral sequences? The next
theorem shows that this is indeed the case.

\begin{thm}
  \label{thm:stability-ss}
Let $\cD$ and $\cL$ be two diagrams in $\RRDiag{K}$. Suppose that there are $(\veps, K)$-acyclic carriers $F^\veps:\cD\rightrightarrows \cL$ and $G^\veps:\cL\rightrightarrows \cD$, together with a pair of shift $(\veps, K)$-acyclic carriers $I^{2\veps}_\cD:\cD\rightrightarrows \cD$ and $I^{2\veps}_\cL:\cL\rightrightarrows \cL$, and such that these restrict to acyclic equivalences
\[
G^\veps_\tau \circ F^\veps_\tau \subseteq (I^{2\veps}_\cD)_\tau \mbox{ and } F^\veps_\tau \circ G^\veps_\tau \subseteq (I^{2\veps}_\cL)_\tau
\]
for each simplex $\tau \in K$.
  Then
  $$
  d_I^1(E(\cD, K), E(\cL, K)) \leq \veps\ .
  $$
\end{thm}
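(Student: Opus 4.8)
The plan is to pass to geometric realizations, invoke Theorem~\ref{thm:geometric-realization} to obtain a filtration–preserving $\veps$-acyclic equivalence between $\Delta\cD$ and $\Delta\cL$, and then transport that equivalence down to filtered cellular chain complexes using the Acyclic Carrier Theorem, exactly in the spirit of Lemma~\ref{lem:RCWcpx-veps-interleaving} but keeping track of the skeletal filtration $F^\bullet$. By Lemma~\ref{lem:iso-geom-total} the filtered total complex $S^\Tot_*(\cD)$ is isomorphic to $C_*^\cell(\Delta\cD)$ as a filtered differential graded module, so $E(\cD,K)$ is canonically identified with the spectral sequence of the filtered complex $C_*^\cell(\Delta\cD)$, and likewise for $\cL$; hence it suffices to produce a $(\veps,1)$-interleaving between the spectral sequences of $C_*^\cell(\Delta\cD)$ and $C_*^\cell(\Delta\cL)$.

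Theorem~\ref{thm:geometric-realization} applies verbatim under the stated hypotheses and yields a filtration–preserving $\veps$-acyclic equivalence: carriers $F^\veps\colon\Delta\cD\rightrightarrows\Delta\cL$, $G^\veps\colon\Delta\cL\rightrightarrows\Delta\cD$ and shift carriers $I^{2\veps}_{\Delta\cD}$, $I^{2\veps}_{\Delta\cL}$, all restricting to the filtration pieces $F^p$, and with the relevant composite carriers acyclic as verified in its proof. For each $a\in\bbR$ I would use the existence part of Theorem~\ref{thm:acyclic-carrier} to choose a chain map $f_a\colon C_*^\cell(\Delta\cD_a)\to C_*^\cell(\Delta\cL_{a+\veps})$ carried by $F^\veps_a$ (and $g_a$ carried by $G^\veps_a$). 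Since the faces of a cell in $F^p\Delta\cD$ again lie in $F^p\Delta\cD$ and $F^\veps_a$ maps cells of $F^p\Delta\cD_a$ into subcomplexes of $F^p\Delta\cL_{a+\veps}$, the usual cell-by-cell construction can be performed so that $f_a$ preserves the filtration $F^pC_*^\cell$. A filtration–preserving chain map induces a morphism of the associated spectral sequences on every page by~\citep[Thm.~3.5]{McCleary2001}.

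Next I would check naturality in $a$ up to a filtration–preserving homotopy. For $a\le b$, both $\Delta\cL(a+\veps\le b+\veps)\circ f_a$ and $f_b\circ\Delta\cD(a\le b)$ are carried by the acyclic, filtration–preserving carrier $c\mapsto F^\veps_b(\Delta\cD(a\le b)(c))$, so the equivalence part of Theorem~\ref{thm:acyclic-carrier} provides a chain homotopy between them carried by that same carrier, hence filtration–preserving. Such a homotopy descends to each associated graded $F^pC_*^\cell/F^{p-1}C_*^\cell$, so the two induced maps agree already on $E^1$, and therefore on every page $r\ge 1$ since $E^{r+1}=H(E^r)$ and spectral sequence morphisms commute with this. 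Thus $\{f_a\}_a$ assembles into a $1$-morphism $E(\cD,K)\to E(\cL,K)[\veps]$ in $\PSpSq$, and $\{g_a\}_a$ into a $1$-morphism $E(\cL,K)\to E(\cD,K)[\veps]$. For the interleaving identities, $g_{a+\veps}\circ f_a$ is carried by $G^\veps\circ F^\veps\subseteq I^{2\veps}_{\Delta\cD}$ while $\Sigma^{2\veps}C_*^\cell(\Delta\cD_a)$ is carried by the shift carrier $I^{2\veps}_{\Delta\cD}$; the equivalence part of Theorem~\ref{thm:acyclic-carrier} again gives a filtration–preserving chain homotopy between them, so on $E^1$ — hence on all pages $r\ge1$ — one gets $[g]\circ[f]=[\Sigma^{2\veps}]$, and symmetrically $[f]\circ[g]=[\Sigma^{2\veps}]$. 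This is precisely the commutativity of diagram~(\ref{diag:interleaving}) from page $1$ on, so $E(\cD,K)$ and $E(\cL,K)$ are $(\veps,1)$-interleaved and $d_I^1(E(\cD,K),E(\cL,K))\le\veps$.

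The main obstacle is a conceptual rather than a computational one: the chain homotopies produced by the Acyclic Carrier Theorem are only carried by filtration–preserving carriers, hence only filtration–preserving (not filtration–lowering), which is exactly why one lands a page-$1$ interleaving and cannot in general push it down to page $0$. A secondary point requiring care is that the cell-by-cell extensions must be organized to respect $F^\bullet$; this works because $\delta^\Delta$ preserves the filtration (as recorded just before Lemma~\ref{lem:iso-geom-total}) and because Theorem~\ref{thm:geometric-realization} already guarantees the carriers restrict to the $F^p$.
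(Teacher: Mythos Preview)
Your proposal is correct and follows essentially the same route as the paper: invoke Theorem~\ref{thm:geometric-realization} to obtain a filtration-preserving $\veps$-acyclic equivalence on geometric realizations, use Lemma~\ref{lem:iso-geom-total} to identify the cellular chain complex with the total complex, and then apply the Acyclic Carrier Theorem to produce filtration-preserving chain maps and chain homotopies, which suffice for equality on $E^1$ and hence a $(\veps,1)$-interleaving. Your closing observation that the homotopies are only filtration-preserving (not filtration-lowering), and that this is precisely what forces the interleaving to start at page~$1$, is the right diagnosis and matches the paper's argument.
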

\begin{proof}
Recall from Theorem~\ref{thm:geometric-realization} that there is a filtration-preserving acyclic carrier
$F^\veps:\Delta_K \cD \rightrightarrows \Delta_K\cL[\veps]$.
This implies that there
are chain complex morphisms $f^\veps_a: C_*(\Delta \cD)_r\to C_*(\Delta \cL)_{a+\veps}$ which
respect filtrations in the sense that
$f^\veps_{r}(F^p C_*(\Delta \cD)_r) \subseteq F^p C_*(\Delta \cL)_{r+\veps}$  for all $p\geq 0$.
By Lemma~\ref{lem:iso-geom-total} this defines a morphism
$f^\veps_r:S^\Tot_*(\cD)_r \to S^\Tot_*(\cL)_{r+\veps}$ which respects filtrations.
Altogether we deduce that $f^\veps_r$ determines a morphism
of spectral sequences $f^\veps_r:E^*_{p,q}(\cD)_r\rightarrow E^*_{p,q}(\cL)_{r+\veps}$.
Similarly as in Lemma~\ref{lem:RCWcpx-veps-interleaving} the commutativity
\begin{equation}
\label{eq:commutativity-ssmorphism}
\Sigma^\veps E^*_{p,q}(\cL)_{r+\veps}\circ f^\veps_r = f^\veps_{r+\veps}\circ \Sigma^\veps E^*_{p,q}(\cD)_r
\end{equation}
does not need to hold for all $r \in \bbR$. However, recall
from Theorem~\ref{thm:geometric-realization} that there is an inclusion
$\Sigma^r \Delta \cL \circ F^\veps \subseteq F^\veps \circ \Sigma^r \Delta \cD$ where
the superset is acyclic, so that $\Sigma^r C_*(\Delta \cD)_{a+\veps}\circ f^\veps_a$ and  $f^\veps_{a+r} \circ \Sigma^r C_*(\Delta \cD)_a$
 are both carried by the filtration preserving acyclic carrier $F^\veps \circ \Sigma^r \Delta \cD$.
 This implies that there
exist chain homotopies $h^\veps_r:C_n(\Delta \cD)_r\to C_{n+1}(\Delta \cD)_{r+\veps}$
which respect filtrations and such that
\[
f^\veps_{a+r} \circ \Sigma^r C_*(\Delta \cD)_a
- \Sigma^r C_*(\Delta \cD)_{a+\veps}\circ f^\veps_a
= \delta^\Delta \circ h^\veps_a + h^\veps_a \circ \delta^\Delta \ .
\]
for all $a \in \bbR$ and all $r\geq 0$. Recall that the zero page terms are
given as quotients on successive filtration terms
$E^0_{p,q}(\cD)_a= F^p S^\Tot_{p+q}(\cD)_a/F^{p-1}S^\Tot_{p+q}(\cD)_a$, for
all $a \in \bbR$ and all integers $p,q \geq 0$. Thus, by
Lemma~\ref{lem:iso-geom-total} these chain homotopies carry over to $S^\Tot_*(\cD)_a$
and the commutativity relation from equation~(\ref{eq:commutativity-ssmorphism}) holds
from the first page onwards.

Similarly, we can define spectral sequence morphisms
$g^\veps_r:E^*_{p,q}(\cD)_a \rightarrow E^*_{p,q}(\cD)_{a+\veps}$ for all
$a \in \bbR$ which commute with the shift morphisms from the first page.
Also, by inspecting the shift carriers, we can obtain equalities of $1$-spectral
sequence morphisms
$g^\veps_{r+\veps} \circ f^\veps_r = \Sigma^{2\veps} E^*_{p,q}(\cD)_r$ and also
$f^\veps_{r+\veps} \circ g^\veps_r = \Sigma^{2\veps} E^*_{p,q}(\cL)_r$ for all
$r \in \bbR$, and the result follows.
\end{proof}

\begin{exmp}
Consider a pair of point clouds $\bX, \bY \in \bR^N$, together with partitions
$\cP$ and $\cQ$ for $\bX$ and $\bY$ respectively. Also, assume that there
is an isomorphism $\phi:\Delta^\cP\rightarrow \Delta^\cQ$ such that
$d_H(\bX\cap V, \bY\cap \phi(V)) < \veps$ for all $V \in \cP$. As defined in
example~\ref{ex:vietoris-rips-complexes} there are $\veps$-acyclic carrier
equivalences $F_V^\veps:\VR(\bX\cap V)\rightrightarrows \VR(\bY\cap V)$ for all $V \in \cU$.
For any $\sigma \in \Delta^\cP$, one can define $\veps$-acyclic equivalences $F_\sigma^\veps:\cJ^{\VR_*(\bX)}_\cP(\sigma)\rightrightarrows \cJ^{\VR_*(\bY)}_\cQ(\sigma)$ by
sending a cell $\prod_{V\in \sigma} \tau_V \in \cJ^{\VR_*(\bX)}_\cP(\sigma)$ to
$\prod_{V\in \sigma}F^\veps_V(\tau_V) \in \cJ^{\VR_*(\bY)}_\cQ(\sigma)$.
These are compatible and $d_I^1(E(\cJ^{\VR_*(\bX)}_\cP, \Delta^\cP), E(\cJ^{\VR_*(\bY)}_\cQ, \Delta^\cQ))\leq \veps$.
\end{exmp}

\section{Interleavings with respect to different covers}
\label{sec:interleavings-different-covers}

\subsection{Refinement Induced Interleavings}

In the previous sections we considered general diagrams in $\FRDiag{K}$ for some simplicial complex $K$. We will now focus on the situation where we have a filtered simplicial complex X together with a cover $\cU$ by filtered subcomplexes, which provides a diagram $X^\cU:N_\cU\to \FCWcpx$.  The associated spectral sequence will be denoted by $E^*_{*,*}(X,\cU)$,
as done at the start of section~\ref{sec:spectral_sequences}. We want to measure how $E^*_{*,*}(X,\cU)$ changes depending on $\cU$ and follow ideas from~\citep{Serre1955} to achieve this.
First we consider a refinement $\cV \prec \cU$, which means that for all
$V \in \cV$, there exists $U \in \cU$ such that $V \subseteq U$.
In particular, one can choose a morphism
$\rho^{\cU,\cV}:N_\cV \rightarrow N_\cU$ such that $\cV_{\sigma} \subseteq \cU_{\rho\sigma}$ for
all $\sigma \in N_\cV$. This choice is of course not necessarily unique.
We would like to compare the Mayer-Vietoris
spectral sequences of
both covers.
For this, we recall the definition of the \cech~chain complex
outlined in the introduction of~\citep{TorrasCasas2019}, which leads to the following isomorphism on the
terms from the $0$-page
\begin{equation}
\label{eq:cech-alternating}
E_{p,q}^0(X,\cU) \coloneqq \bigoplus_{\sigma \in N^p_\cU}C^\cell_q(\cU_\sigma)\simeq \bigoplus_{s \in X^q} f^{\sigma(s, \,\cU)}_*\Big(\,C^\cell_p\big(\,\Delta^{\sigma(s,\,\cU)}\,\big)\,\Big)\ .
\end{equation}
Here, $\sigma(a,\cU)$ is the simplex of maximal dimension in $N_\cU$ such that
$a \in X \cap \cU_{\sigma(a,\cU)}$, and $f^{\sigma(a, \cU)} \colon \Delta^{\sigma(a, \cU)}\hookrightarrow N_\cU$ denotes the inclusion.
The isomorphism in~(\ref{eq:cech-alternating}) is given by sending a generator $(a)_\sigma \in \bigoplus_{\sigma \in N^p_\cU}C^\cell_q(\cU_\sigma)$ to its transpose $(\sigma)_a$, for all cells $a \in X$ and all $\sigma \in N_\cU$.

Returning to a refinement $\cV\prec \cU$ and a morphism
$\rho^{\cU,\cV}: N_\cV \rightarrow N_\cU$, there is an induced double complex morphism
$\rho^{\cU,\cV}_{p,q}: C_{p,q}(X, \cU)\rightarrow C_{p,q}(X, \cV)$ given by
$$
\rho^{\cU,\cV}_{p,q}((\sigma)_a) =
\begin{cases}
  (\rho^{\cU,\cV}\sigma)_a & \mbox{if } \dim(\rho^{\cU,\cV}\sigma)=p, \\
  0  & \mbox{otherwise,}
\end{cases}
$$
for all generators $(\sigma)_a \in  C_{p,q}(X, \cU)$ with $\sigma \in N_{\cV}^p$ and $a \in X^q$.
\begin{lem}
\label{lem:refinement-double-complexes}
$\rho^{\cU,\cV}_{*,*}$ is a morphism of double complexes. Thus, it induces a
morphism of spectral sequences
$$
\rho^{\cU,\cV}_{p,q}: E^*_{p,q}(X,\cV)\rightarrow E^*_{p,q}(X,\cU)
$$
dependent on the choice of $\rho^{\cU,\cV}$.
\end{lem}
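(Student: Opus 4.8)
The plan is to verify directly that $\rho^{\cU,\cV}_{*,*}$ commutes with both the vertical differential $d^V$ and the horizontal differential $d^H$, working with the two natural direct-sum decompositions of the double complex, and then to invoke functoriality of the double-complex spectral sequence. Throughout I will read $\rho^{\cU,\cV}_{p,q}$ as a map $C_{p,q}(X,\cV)\to C_{p,q}(X,\cU)$, consistent with the defining formula $(\sigma)_a\mapsto(\rho^{\cU,\cV}\sigma)_a$ (with $\sigma\in N^p_\cV$ and $\rho^{\cU,\cV}\sigma\in N_\cU$) and with the target $E^*_{p,q}(X,\cV)\to E^*_{p,q}(X,\cU)$ of the lemma.

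For the vertical part I would use the decomposition $C_{p,q}(X,\cV)=\bigoplus_{\sigma\in N^p_\cV}C^\cell_q(X^\cV(\sigma))$. Fix $\sigma\in N^p_\cV$ with $\dim(\rho^{\cU,\cV}\sigma)=p$. Since $V_v\subseteq U_{\rho^{\cU,\cV}v}$ for each vertex $v\in\sigma$, we get an inclusion of subcomplexes $X^\cV(\sigma)\subseteq X^\cU(\rho^{\cU,\cV}\sigma)$ inside $X$, and on the $\sigma$-summand $\rho^{\cU,\cV}_{p,q}$ is exactly the chain map induced by this inclusion (a $q$-cell $a$ of $X^\cV(\sigma)$ is sent to the same cell, now viewed in $X^\cU(\rho^{\cU,\cV}\sigma)$). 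Inclusions of CW-subcomplexes are cellular chain maps, so this commutes with the cellular differential, and the overall sign $(-1)^p$ carried by $d^V$ agrees on both sides; when $\dim(\rho^{\cU,\cV}\sigma)<p$ the summand is sent to $0$ while $d^V$ preserves the $\sigma$-index, so commutativity is automatic. Hence $d^V\circ\rho^{\cU,\cV}_{*,*}=\rho^{\cU,\cV}_{*,*}\circ d^V$.

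For the horizontal part I would pass to the transpose decomposition~(\ref{eq:cech-alternating}): for each $q$-cell $a\in X^q$ the $a$-labelled summand of $C_{*,q}(X,\cV)$ is the simplicial chain complex of the maximal simplex $\sigma(a,\cV)\in N_\cV$ containing $a$, and $d^H$ restricts on it to the ordinary simplicial boundary operator (each map $X^\cV(\sigma_i\prec\sigma)^*$, being induced by an inclusion of subcomplexes of $X$, leaves the cell $a$ unchanged). The key observation is that $\cV_{\sigma(a,\cV)}\subseteq\cU_{\rho^{\cU,\cV}\sigma(a,\cV)}$ together with maximality of $\sigma(a,\cU)$ forces $\rho^{\cU,\cV}\sigma(a,\cV)$ to be a face of $\sigma(a,\cU)$; thus $\rho^{\cU,\cV}$ restricts to a simplicial map $\Delta^{\sigma(a,\cV)}\to\Delta^{\sigma(a,\cU)}$, and on the $a$-summand $\rho^{\cU,\cV}_{*,q}$ is precisely the induced simplicial chain map (the clause ``$0$ if $\dim(\rho^{\cU,\cV}\sigma)<p$'' being exactly the usual vanishing on a simplex with a repeated image vertex). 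Since chain maps induced by simplicial maps commute with the boundary, and $d^H$ never mixes different cells $a$, we obtain $d^H\circ\rho^{\cU,\cV}_{*,*}=\rho^{\cU,\cV}_{*,*}\circ d^H$. Therefore $\rho^{\cU,\cV}_{*,*}$ is a morphism of double complexes; as it maps $C_{p,q}$ into $C_{p,q}$ it commutes with $d^\Tot=d^V+d^H$ and preserves the vertical filtration $F^mS^\Tot_*$, so by~\citep[Thm.~3.5]{McCleary2001} it induces a morphism of spectral sequences $\rho^{\cU,\cV}_{p,q}\colon E^*_{p,q}(X,\cV)\to E^*_{p,q}(X,\cU)$, which manifestly depends on the chosen $\rho^{\cU,\cV}$.

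The main obstacle I anticipate is purely bookkeeping: checking that the defining formula for $\rho^{\cU,\cV}_{p,q}$ really coincides with the standard simplicial chain map once orientations of the image simplices and the degenerate case are sorted out, and that each of the two commutativity checks is performed against the correct one of the two natural decompositions of the double complex. None of this is deep, but it is where a careless argument would slip.
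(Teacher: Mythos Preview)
Your proposal is correct and follows essentially the same approach as the paper: verify commutativity with $d^V$ and $d^H$ separately and then invoke \citep[Thm.~3.5]{McCleary2001}. Your treatment is slightly more expansive---you unpack the $d^H$ check via the transpose decomposition~(\ref{eq:cech-alternating}) and explicitly identify the map on each $a$-summand as the simplicial chain map induced by $\rho^{\cU,\cV}$ (correctly noting that the degenerate clause matches the vanishing on repeated vertices), whereas the paper simply asserts that the refinement chain morphism on nerves commutes with the \cech\ differentials---but the underlying argument is the same.
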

\begin{proof}
Let $\delta_\cV$ and $\delta_\cU$ denote the respective \cech~differentials from
$\check{\cC}_p(\cV;C^\cell_q)$ and $\check{\cC}_p(\cU;C^\cell_q)$. As we have the refinement chain
morphism $\rho^{\cU,\cV}_*:C^\cell_*(N_\cU) \to C^\cell_*(N_\cV)$ we also have commutativity
$\rho^{\cU,\cV}_{*,*} \circ \delta^\cV = \delta^{\,\cU} \circ \rho^{\cU,\cV}_{*,*}$.
This implies that $\rho^{\cU,\cV}_{*,*}$ commutes with the horizontal differential $d^H$.
For commutativity with $d^V$, we consider a generating chain $(\sigma)_a \in E^0_{p,q}(X,\cV)$
with $\sigma \in N_\cU^p$ and $a \in X^q$.
Then if $\dim(\rho^{\cU,\cV}\sigma)=p$ we have
\begin{multline}
\nonumber
\rho^{\cU,\cV}_{p,q-1} \circ d^V \big( (\sigma)_a \big) =
\rho^{\cU,\cV}_{p,q-1} \Big((-1)^p\sum_{b \preceq \overline{a}} ([b : a]\sigma)_{b} \Big) =
(-1)^p\sum_{b \preceq \overline{a}} ([b : a] \rho^{\cU,\cV} \sigma)_{b}
\\ =(-1)^p d_q^\cell \big((\rho^{\cU,\cV} \sigma)_a\big) =
d^V \circ \rho^{\cU,\cV}_{p,q}\big( (\sigma)_a \big)
\end{multline}
and for $\dim(\rho^{\cU,\cV}\sigma)<p$ commutativity follows since both terms vanish.

A morphism of double complexes gives rise to a morphism of the
vertical filtrations. By \citep[Thm.~3.5]{McCleary2001} this induces a
morphism of spectral sequences $\rho^{\cU,\cV}_{*,*}$.
\end{proof}

Since $\rho^{\cU,\cV}:N_\cU \rightarrow N_\cV$ is not unique,
the induced morphism $\rho^{\cU,\cV}_{*,*}$  on
the $0$-page will in general not be unique either. We have, however, the following:
\begin{prop}
\label{prop:refinement-independent}
The $2$-morphism obtained by restricting $\rho^{\cU,\cV}_{*,*}$ is independent
of the particular choice of refinement map $\rho^{\cU,\cV}: N_\cV \rightarrow N_\cU$.
\end{prop}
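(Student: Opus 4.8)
The plan is to mimic Serre's argument (\citep[Prop.~2, \S29]{Serre1955}), realising the difference of the two induced maps as a boundary. Fix two refinement maps $\rho, \rho' \colon N_\cV \rightarrow N_\cU$, both satisfying $\cV_\sigma \subseteq \cU_{\rho\sigma}$ and $\cV_\sigma \subseteq \cU_{\rho'\sigma}$ for all $\sigma \in N_\cV$, and write $\rho_{*,*}$ and $\rho'_{*,*}$ for the two double complex morphisms of Lemma~\ref{lem:refinement-double-complexes}. I would construct a map $D \colon C_{p,q}(X,\cV) \rightarrow C_{p+1,q}(X,\cU)$ of bidegree $(1,0)$ which is a chain homotopy for the horizontal (\cech) differential, $d^H D + D\, d^H = \rho'_{*,*} - \rho_{*,*}$, and which commutes up to sign with the vertical differential $d^V$. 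Granting this, $D$ descends through the vertical homology to a map $\overline D \colon E^1_{p,q}(X,\cV) \rightarrow E^1_{p+1,q}(X,\cU)$ satisfying $d^1 \overline D + \overline D\, d^1 = (\rho'_{*,*})^1 - (\rho_{*,*})^1$; hence $\rho_{*,*}$ and $\rho'_{*,*}$ induce chain homotopic maps of the complexes $(E^1_{\bullet,q}, d^1)$ and therefore the \emph{same} map on $E^2_{p,q} = \Ho_p(E^1_{\bullet,q}, d^1)$, and a fortiori on every page $E^r$ with $r \geq 2$.

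The geometric input needed to build $D$ is the following observation. For $\sigma = \{W_0, \dots, W_p\} \in N_\cV$ we have $\cV_\sigma = \bigcap_j \cV_{W_j} \neq \emptyset$; since $\cV_\sigma \subseteq \cU_{\rho W_j}$ and $\cV_\sigma \subseteq \cU_{\rho' W_j}$ for each $j$, the finite vertex set $\{\rho W_0, \rho' W_0, \dots, \rho W_p, \rho' W_p\}$ spans a simplex $\Delta_\sigma$ of $N_\cU$, every face $\tau$ of which satisfies $\cV_\sigma \subseteq \cU_\tau$. Thus the inclusions of covering sets give cellular maps $C^\cell_q(\cV_\sigma) \rightarrow C^\cell_q(\cU_\tau)$ compatible with all face inclusions in $\Delta_\sigma$, so that the classical prism operator
\[
D\big((a)_\sigma\big) = \sum_{i=0}^{p} (-1)^{i} \, (a)_{\langle \rho W_0, \dots, \rho W_i,\, \rho' W_i, \dots, \rho' W_p\rangle}
\]
(with degenerate nerve simplices set to zero, and $a \in C^\cell_q(\cV_\sigma)$ mapped forward) makes sense in $C_{p+1,q}(X,\cU)$. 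Equivalently, via the isomorphism~(\ref{eq:cech-alternating}) the horizontal complexes split over the $q$-cells $s$ of $X$ into copies of the simplicial chain complex of the standard simplex $\Delta^{\sigma(s,\cV)}$, and on each summand $D$ is the prism homotopy between the two simplicial maps induced by $\rho$ and $\rho'$, whose existence is also guaranteed by the Acyclic Carrier Theorem (Thm.~\ref{thm:acyclic-carrier}) applied to the acyclic carrier $\sigma \mapsto \Delta_\sigma$.

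Two verifications then remain, both essentially routine. First, that $D$ really is a $d^H$-chain homotopy from $\rho_{*,*}$ to $\rho'_{*,*}$: this is the standard prism identity, the only extra point being that the observation above guarantees that all nerve simplices occurring in $D$ and in $d^H D$ are genuine simplices of $N_\cU$ on which the coefficient cell is defined. Second, that $D$ commutes with $d^V$ up to the sign dictated by the factor $(-1)^p$ appearing in $d^V$: this is immediate, since $d^V$ only acts on the coefficient cell $a$ via the cellular boundary in $X$ and leaves the nerve simplices untouched, while $\overline a \subseteq \cV_\sigma$ ensures that all faces of $a$ still map into the relevant $\cU_\tau$.

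I expect the main obstacle to be the sign and ordering bookkeeping that makes $D$ \emph{simultaneously} a homotopy for $d^H$ and (anti)commute strictly with $d^V$ — in particular fixing a consistent total order on the vertices of $N_\cU$ so that the prism formula is compatible with the alternating \cech convention, and tracking how the prism signs interact with the $(-1)^p$ in $d^V$. Once $D$ has these two properties, the descent to $E^1$ and the conclusion on $E^2$ (and on all later pages, since $E^{r}$ for $r \geq 2$ is functorially determined by $(E^2, d^2, d^3, \dots)$) are purely formal.
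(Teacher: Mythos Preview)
Your proposal is correct and follows essentially the same approach as the paper: build a chain homotopy of bidegree $(1,0)$ at the double-complex level using an acyclic carrier on the nerves, check it (anti)commutes with $d^V$, and conclude that the two refinement maps agree from page~$2$ onward. The only cosmetic differences are that the paper uses the slightly larger carrier $R(\sigma)=\{\nu\in N_\cU\mid \cV_\sigma\subseteq \cU_\nu\}$ and invokes Thm.~\ref{thm:acyclic-carrier} abstractly, whereas you take the subcarrier $\Delta_\sigma\subseteq R(\sigma)$ spanned by the images of the vertices under $\rho$ and $\rho'$ and write down the prism operator explicitly; your version is more detailed about the $d^V$-compatibility and the descent to $E^1$, which the paper leaves implicit.
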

\begin{proof}
We have to show that $\rho^{\cU,\cV}_{*,*}$ is independent of the particular choice of the refinement morphism.
First, define a carrier $R:N_\cV\rightrightarrows N_\cU$
by the assignment
$$
\sigma \mapsto R(\sigma)=\big\{ \nu \in N_\cU \mid V_\sigma \subseteq U_\nu \big\}\ .
$$
The geometric realisation of the subcomplex $R(\sigma)$ is homeomorphic to a standard simplex, in particular contractible, so $R$ is acyclic. Note that $\rho^{\cU,\cV}_{*,*}$ is carried by $R$.
Hence, by Thm.~\ref{thm:acyclic-carrier} for any pair
of refinement maps $\rho^{\cU,\cV}, \tau^{\cU,\cV}\colon N_\cV \rightarrow N_\cU$,
there exists a chain homotopy $k_*:C_{n}(N_\cV)\rightarrow C_{n+1}(N_\cU)$ carried by $R$, so that
\[
k_*\delta^\cV +\delta^{\,\cU} k_*  = \tau_*^{\cU,\cV} - \rho_*^{\cU,\cV}
\]
for all $n\geq 0$ and where
$\tau_*^{\cU,\cV}$ and $\rho_*^{\cU,\cV}$ are induced morphisms of chain complexes $C_*(N_\cV)\rightarrow C_*(N_\cU)$.
In particular, using the same notation, this translates into chain homotopies
 $k_*: E^0_{p,q}(X,\cV)\rightarrow E^0_{p+1,q}(X,\cU)$
on the $0$-page such that
\[
k_*\delta^\cV + \delta^{\,\cU} k_* = \tau^{\cU,\cV}_{*,*}-\rho^{\cU,\cV}_{*,*}
\]
Thus, $\tau^{\cU,\cV}_{*,*}=\rho^{\cU,\cV}_{*,*}$ from the second page onward.
\end{proof}

\begin{figure}
  \begin{tikzpicture}
    \foreach \x in {0,1,2,3}
    {
      \pgfmathtruncatemacro{\shif}{\x * 4}
      \def\shift{\shif cm}
      \begin{scope}[xshift=\shift]
        \node [below] at (0,0) {$a$};
        \node [below] at (1,0) {$b$};
        \node [below] at (2,0) {$c$};
        \node [below] at (3,0) {$d$};
        \node [above] at (0,1) {$e$};
        \node [above] at (1,1) {$f$};
        \node [above] at (2,1) {$g$};
        \node [above] at (3,1) {$h$};
        \ifnum \x > 0
          \fill [color=gray] (0,0) rectangle (1,1);
          \ifnum \x > 1
            \fill [color=gray] (1,0) rectangle (2,1);
            \ifnum \x > 2
              \fill [color=gray] (2,0) rectangle (3,1);
            \fi
          \fi
        \fi
        \draw[line width=1.5] (0,0)--(3,0)--(3,1)--(0,1)--cycle;
        \ifnum \x > 0
          \draw[line width=1.5] (1,0)--(1,1);
          \ifnum \x > 1
            \draw[line width=1.5] (2,0)--(2,1);
          \fi
        \fi
        \fill (0,0) circle (2pt);
        \fill (1,0) circle (2pt);
        \fill (2,0) circle (2pt);
        \fill (3,0) circle (2pt);
        \fill (0,1) circle (2pt);
        \fill (1,1) circle (2pt);
        \fill (2,1) circle (2pt);
        \fill (3,1) circle (2pt);
      \end{scope}
    }
  \end{tikzpicture}
  \caption{Cubical complex $\cC_*$ at values $0$,$1$,$2$ and $3$.}
  \label{fig:filtered-cubical}
\end{figure}
\begin{figure}
\begin{tikzpicture}
\foreach \x in {0,1,2}
{
  \pgfmathtruncatemacro{\xshif}{\x * 5}
  \def\xshift{\xshif cm};
  \foreach \y in {0,1,2}
  {
    \pgfmathtruncatemacro{\yshif}{-\y * 2}
    \def\yshift{\yshif cm};
    \begin{scope}[xshift=\xshift, yshift=\yshift]
      \if \x 0
        \draw[->, line width=2] (3.3,0.5)--(4.7,0.5);
        \if \y 1
          \node at (4,0.8) {$0$};
        \else
          \node at (4,0.8) {$\Id$};
        \fi
      \fi
      \if \x 1
        \draw[->, line width=2] (3.3,0.5)--(4.7,0.5);
        \if \y 0
          \node at (4,0.8) {$0$};
        \else
          \node at (4,0.8) {$\Id$};
        \fi
      \fi
      \if \x 0
        \node at (-1,0.5) {$\y$};
      \fi
      \if \y 0
        \node at (1.5,1.5) {$\cU_\x$};
      \fi
      \ifnum \y > 0
        \fill [color=gray] (0,0) rectangle (1,1);
        \ifnum \y > 1
          \fill [color=gray] (1,0) rectangle (2,1);
        \fi
      \fi
      \draw[line width=1.5] (0,0)--(3,0)--(3,1)--(0,1)--cycle;
      \ifnum \y > 0
        \draw[line width=1.5] (1,0)--(1,1);
        \ifnum \y > 1
          \draw[line width=1.5] (2,0)--(2,1);
        \fi
      \fi
      \if \x 0
        \if \y 2
          \draw[line width=1.5, color=red] (2,0)--(3,0)--(3,1)--(2,1)--cycle;
        \else
          \if \y 0
            \fill[color=blue] (1,0) circle (5pt);
            \fill[color=blue] (1,1) circle (5pt);
          \fi
          \fill[color=blue] (2,0) circle (5pt);
          \fill[color=blue] (2,1) circle (5pt);
        \fi
      \fi
      \if \x 1
        \if \y 0
          \fill[color=blue] (1,0) circle (5pt);
          \fill[color=blue] (1,1) circle (5pt);
        \else
          \draw[line width=1.5, color=red] (1,0)--(3,0)--(3,1)--(1,1)--cycle;
        \fi
      \fi
      \if \x 2
        \draw[line width=1.5, color=red] (0,0)--(3,0)--(3,1)--(0,1)--cycle;
      \fi
      \fill (0,0) circle (2pt);
      \fill (1,0) circle (2pt);
      \fill (2,0) circle (2pt);
      \fill (3,0) circle (2pt);
      \fill (0,1) circle (2pt);
      \fill (1,1) circle (2pt);
      \fill (2,1) circle (2pt);
      \fill (3,1) circle (2pt);
    \end{scope}
  }
}
\end{tikzpicture}
\caption{Cubical complex $\cC_*$ with covers $\cU_0$, $\cU_1$ and $\cU_2$, and
with filtration values $0$,$1$ and $2$. Blue dots represent classes in
$E_{1,0}^2(\cC,\cU_i)$ and red loops represent classes on
$E_{0,1}^2(\cC,\cU_i)$, for $i=0,1,2$.}
\label{fig:filtered-covers-cubical}
\end{figure}

\begin{exmp}
\label{ex:cubical-filtered}
Consider a filtered cubical complex $\cC_*$. At value $0$, $\cC_*$ is given by the
vertices on $\cR^2$ at the coordinates $a=(0,0), b=(1,0), c=(2,0), d=(3,0),
e=(0,1), f=(1,1), g=(2,1), h=(3,1)$, together with all edges contained in the boundary of the rectangle $adhe$. Then, at value $1$ there appears the edge $bf$ with the
face $abfe$. At value $2$ the edge $gc$ with the face $fgcb$, and finally at
value $3$ the face $ghdc$ appears. This is depicted on
figure~\ref{fig:filtered-cubical}. Then, consider the cover $\cU_0$ by three
subcomplexes on the squares $A=(a,b,f,e)$, $B=(b,c,g,f)$ and $C=(c,d,h,g)$.
Also, we consider
the cover $\cU_1$ given by $A$ and $C \cup B$, and  $\cU_2$ given by all $\cC_*$. The induced morphisms on second page terms
at different filtratioin values are either null or the identity, as illustrated
on figure~\ref{fig:filtered-covers-cubical}.
\end{exmp}

A consequence of Prop.~\ref{prop:refinement-independent}
is that if we have a space $X$ together
with covers $\cU \prec \cV \prec \cU$, then by uniqueness
the morphism on the second page induced by the
consecutive inclusions coincides with the identity.
This gives rise to the next result.

\begin{prop}
\label{prop:refinement-iso}
Suppose a pair of covers $\cU$ and $\cV$ of $X$ are
a refinement of one another. Then there is a $2$-spectral sequence isomorphism $E^2_{*,*}(X,\cU)\simeq E^2(X,\cV)$.
\end{prop}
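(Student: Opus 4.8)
The plan is to construct the refinement-induced maps in both directions on the second page and to prove that they are mutually inverse, using the independence result of Proposition~\ref{prop:refinement-independent} as the main tool.

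First I would use the hypothesis that $\cU$ and $\cV$ refine one another to choose refinement maps $\rho^{\cU,\cV}\colon N_\cV\to N_\cU$ with $\cV_\sigma\subseteq\cU_{\rho^{\cU,\cV}\sigma}$ for all $\sigma\in N_\cV$, and $\rho^{\cV,\cU}\colon N_\cU\to N_\cV$ with $\cU_\tau\subseteq\cV_{\rho^{\cV,\cU}\tau}$ for all $\tau\in N_\cU$. By Lemma~\ref{lem:refinement-double-complexes} each of these induces a morphism of spectral sequences, and by Proposition~\ref{prop:refinement-independent} the restrictions to the second page $\rho^{\cU,\cV}_{*,*}\colon E^2_{*,*}(X,\cV)\to E^2_{*,*}(X,\cU)$ and $\rho^{\cV,\cU}_{*,*}\colon E^2_{*,*}(X,\cU)\to E^2_{*,*}(X,\cV)$ are independent of the choices made. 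It then suffices to show that these two maps are inverse to one another.

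For this I would look at the composite simplicial map $\mu=\rho^{\cU,\cV}\circ\rho^{\cV,\cU}\colon N_\cU\to N_\cU$, which satisfies $\cU_\tau\subseteq\cV_{\rho^{\cV,\cU}\tau}\subseteq\cU_{\mu\tau}$ and is therefore a legitimate refinement map for the trivial refinement $\cU\prec\cU$. The key point is to verify that at the level of double complexes the composition $\rho^{\cU,\cV}_{*,*}\circ\rho^{\cV,\cU}_{*,*}$ agrees with the morphism $\mu_{*,*}$ directly attached to $\mu$; this is precisely where one has to be careful with the dimension truncation in the definition of these morphisms, but it works because the dimension of a simplex does not increase under $\rho^{\cV,\cU}$ or $\rho^{\cU,\cV}$, so that $\dim(\mu\sigma)=p$ already forces $\dim(\rho^{\cV,\cU}\sigma)=p$ and the truncation for the composite coincides with the iterated one. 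Given this, functoriality of the spectral sequence of a filtered differential graded module shows that $\rho^{\cU,\cV}_{*,*}\circ\rho^{\cV,\cU}_{*,*}$ equals the $E^2$-map induced by $\mu$. Finally, applying Proposition~\ref{prop:refinement-independent} once more, this time with both covers equal to $\cU$, the $E^2$-map induced by $\mu$ equals the one induced by the identity map $N_\cU\to N_\cU$, which induces the identity on the double complex and hence the identity on $E^2_{*,*}(X,\cU)$. Thus $\rho^{\cU,\cV}_{*,*}\circ\rho^{\cV,\cU}_{*,*}=\Id$, and by the symmetric argument $\rho^{\cV,\cU}_{*,*}\circ\rho^{\cU,\cV}_{*,*}=\Id$, which yields the desired $2$-spectral sequence isomorphism.

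The main obstacle I anticipate is not conceptual but bookkeeping: checking carefully that the composite of two truncated double-complex morphisms equals the truncated morphism of the composite simplicial map, and confirming that Proposition~\ref{prop:refinement-independent} genuinely applies to a self-refinement $\cU\prec\cU$ — which it does, since its proof only uses that the carrier $R(\sigma)=\{\nu\in N_\cU\mid U_\sigma\subseteq U_\nu\}$ has a contractible (full-simplex) geometric realization, and this remains true when the two covers coincide.
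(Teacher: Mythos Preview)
Your proposal is correct and follows the same approach as the paper: the paper's argument (stated in the paragraph just before the proposition rather than in a formal proof) is precisely that for $\cU\prec\cV\prec\cU$ the second-page morphism induced by the consecutive refinements coincides with the identity by the uniqueness of Proposition~\ref{prop:refinement-independent}. You have simply fleshed out the details, including the verification that the composite of the two truncated double-complex morphisms equals the truncated morphism of the composite simplicial map, which the paper leaves implicit.
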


This corollary implies that for any cover $\cU$ of $X$,
the cover $\cU\cup X$ obtained by adding the extra
covering element $X$ is such that
the second page $E^2_{p,q}(X, \cU\cup X)$ has
only the first column nonzero.

\begin{lem}
\label{lem:cover-containing-space}
Consider a cover $\cU$ of a space $X$, and suppose that
$X \in \cU$. Then $E^2_{p,q}(X,\cU) = 0$
for all $p>0$.
\end{lem}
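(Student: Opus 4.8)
The plan is to reduce the statement to the trivial cover of $X$ by itself and invoke Proposition~\ref{prop:refinement-iso}. Write $\cV = \{X\}$ for the one-element cover. First I would check that $\cU$ and $\cV$ are refinements of one another. On one hand $\cV \prec \cU$: the single covering subcomplex $X \in \cV$ is contained in the element $X \in \cU$, and a refinement map $\rho^{\cU,\cV}\colon N_\cV \to N_\cU$ is obtained by sending the unique vertex of $N_\cV$ to the vertex of $N_\cU$ indexed by $X$. On the other hand $\cU \prec \cV$: every covering subcomplex $U \in \cU$ satisfies $U \subseteq X$, so the constant simplicial map $N_\cU \to N_\cV$ serves as a refinement map. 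Thus the hypotheses of Proposition~\ref{prop:refinement-iso} are satisfied.

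By Proposition~\ref{prop:refinement-iso} there is a $2$-spectral sequence isomorphism $E^2_{*,*}(X,\cU) \simeq E^2_{*,*}(X,\cV)$, so it is enough to compute the right-hand side. The nerve $N_\cV$ of the one-element cover $\{X\}$ consists of a single vertex, hence $N_\cV^p = \emptyset$ for every $p > 0$. Consequently $E^1_{p,q}(X,\cV) = \bigoplus_{\sigma \in N_\cV^p} \PH_q\big(X^\cV(\sigma)\big) = 0$, and therefore $E^2_{p,q}(X,\cV) = 0$, for all $p > 0$ and all $q$. Transporting this along the isomorphism gives $E^2_{p,q}(X,\cU) = 0$ for all $p > 0$, which is the claim.

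I do not expect a real obstacle: the proof is a direct application of Proposition~\ref{prop:refinement-iso}, and the only points requiring attention are that $\{X\}$ is a legitimate cover of $X$ by a filtered subcomplex (namely $X$ itself) and that the mutual-refinement condition genuinely holds in both directions, both of which are dispatched in the first paragraph. If one preferred an argument internal to $\cU$, one could instead note that the vertex of $N_\cU$ indexed by $X$ is a cone point of $N_\cU$ and that $X^\cU(\sigma \cup \{X\}) = X^\cU(\sigma)$ for every $\sigma \in N_\cU$; coning off along this vertex then defines a contracting chain homotopy for the first-page \v{C}ech complex $\big(E^1_{*,q}(X,\cU), d_1\big)$ in positive degrees, again forcing $E^2_{p,q}(X,\cU) = 0$ for $p > 0$. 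I would keep the shorter reduction in the text.
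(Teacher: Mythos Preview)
Your proof is correct and follows exactly the same approach as the paper: reduce to the trivial cover $\{X\}$ via the mutual refinements $\{X\}\prec\cU\prec\{X\}$, invoke Proposition~\ref{prop:refinement-iso}, and read off the vanishing from the one-vertex nerve. The paper's version is terser but otherwise identical; your alternative cone-point argument is a legitimate direct proof but is not what the paper uses.
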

\begin{proof}
This follows from the observation that the cover $\{X\}$ consisting of a single
element satisfies $\{X\} \prec \cU \prec \{X\}$.
Using Prop.~\ref{prop:refinement-iso} we therefore obtain isomorphisms
$E^2_{p,q}(X,\cU)\simeq E^2_{p,q}(X,\{X\})$, and
the result follows.
\end{proof}

Suppose that none of the two covers $\cV$ and $\cU$ refines the other.
One can still compare them using the
common refinement $\cV\cap\cU=\big\{V\cap U\big\}_{V \in \cV, U \in \cU}$
which is a cover of $X$. Thus, there are two refinement morphisms
\begin{equation}
\label{eq:double-interesect-refinements}
\begin{tikzcd}
E^2_{p,q}(X, \cU) &
E^2_{p,q}(X, \cV\cap\cU)
\ar[l,"\rho^{\cU, \cV\cap\cU}_{p,q}",swap]
\ar[r,"\rho^{\cV, \cV\cap\cU}_{p,q}"]  &
E^2_{p,q}(X, \cV).
\end{tikzcd}
\end{equation}
Following \citep[Sec.~28]{Serre1955} we can now build the double complex
$C_{p,q}(\cV, \cU, \PH_k)$
which is for each $k\geq 0$ given by
\[
\begin{tikzcd}[column sep=4cm, row sep=1.1cm]
\bigoplus \limits_{\substack{\sigma \in N^{p+1}_\cV \\ \tau \in N^{q}_\cU}}
\PH_k(\cV_\sigma\cap \cU_\tau) \ar[d,"\delta^\cV"] &
\ar[l,"(-1)^{p+1}\delta^{\,\cU}" above] \ar[d,"\delta^\cV"]
\bigoplus \limits_{\substack{\sigma \in N^{p+1}_\cV \\ \tau \in N^{q+1}_\cU}} \PH_k(\cV_\sigma\cap \cU_\tau) \\
\bigoplus \limits_{\substack{\sigma \in N^{p}_\cV \\ \tau \in N^{q}_\cU}} \PH_k(\cV_\sigma\cap \cU_\tau) &
\ar[l,"(-1)^{p}\delta^{\,\cU}" above]
\bigoplus \limits_{\substack{\sigma \in N^{p}_\cV \\ \tau \in N^{q+1}_\cU}} \PH_k(\cV_\sigma\cap \cU_\tau)
\end{tikzcd}
\]
for any pair of integers $p,q\geq 0$.
From this double complex we can study the two associated spectral sequences
\begin{align*}
{^{\rm I}E}_{p,q}^1(\cV,\cU;\PH_k) &= \bigoplus \limits_{\sigma \in N^p_\cV}
\check{\cH}_q\big((-1)^p\delta^{\,\cU}|_{\cV_\sigma\cap\cU};\PH_k\big) \ ,\\
{^{\rm II}E_{p,q}^1(\cV,\cU;\PH_k)} &= \bigoplus \limits_{\tau \in N^q_\cU}
\check{\cH}_p\big(\,\delta^\cV|_{\cV\cap\cU_\tau};\PH_k\big)\ ,
\end{align*}
whose common target of convergence is
$
\check{\cH}_{n}(\cV\cap \cU;\PH_k)
$
with $p+q = n$. For details about the spectral sequence associated to
a double complex, the reader is recommended to look at \citep[Thm.~2.15]{McCleary2001}.

\begin{exmp}
Consider the cubical complex $\cC_*$ from example~\ref{ex:cubical-filtered}.
Set $\cU=\cU_1$, that is, $\cU$ is the cover by the sets $U_1 = A$ and
$U_2 = A \cup B$. On the other hand, consider $\cV$ to be formed of
$V_1 = A \cup B$ and $V_2 = C$. The double complex
$C_{p,q}(\cC, \cV, \cU, \PH_k)$ is illustrated on
figure~\ref{diag:double-cover-complex} for filtration values $0$ and $1$, and
for $k = 0$. One can see that the refinement morphisms
from~(\ref{eq:double-interesect-refinements})
are actually projections.
\begin{figure}
\begin{tikzpicture}[scale=0.7]
  \foreach \x in {0,1}
  {
  \tiny
  \pgfmathtruncatemacro{\xshift}{12*\x + 1}
  \begin{scope}[xshift=\xshift cm]
    \fill[color=green, semitransparent] (-0.3,-0.3) rectangle (0.3,1.3);
    \node at (-2, 0.5) {$\big( V_1\cap V_2 \big) \cap U_1$};
    \draw[->, line width=2] (0,-0.4)--(0,-1.6);
    \fill[color=blue] (0,0) circle (5pt);
    \fill[color=blue] (0,1) circle (5pt);
  \end{scope}
  \pgfmathtruncatemacro{\xshift}{12*\x - 2}
  \begin{scope}[xshift=\xshift cm, yshift=-3cm]
    \fill[color=orange, semitransparent] (-0.3,-0.3) rectangle (1.3,1.3);
    \node at (0.5,-0.8) {$V_1 \cap U_1$};
    \if \x 1:
      \fill[color=gray] (0,0)--(1,0)--(1,1)--(0,1)--cycle;
    \fi
    \draw[line width=1.5]
      (1,0)--(0,0)--(0,1)--(1,1);
    \fill (0,0) circle (2pt);
    \fill (0,1) circle (2pt);
    \if \x 1:
      \draw[line width=1.5] (1,0)--(1,1);
      \fill (1,0) circle (2pt);
      \fill (1,1) circle (2pt);
    \else
      \fill[color=red] (1,0) circle (5pt);
      \fill[color=red] (1,1) circle (5pt);
    \fi
  \end{scope}
  \pgfmathtruncatemacro{\xshift}{12*\x}
  \begin{scope}[xshift= \xshift cm, yshift=-3cm]
    \fill[color=orange, semitransparent] (-0.3,-0.3) rectangle (1.3,1.3);
    \node at (0.5,-0.8) {$V_1 \cap U_2$};
    \draw[line width=1.5] (0,0)--(1,0);
    \draw[line width=1.5] (0,1)--(1,1);
    \fill (1,0) circle (2pt);
    \fill (1,1) circle (2pt);
    \fill[color=blue] (1,0) circle (5pt);
    \fill[color=blue] (1,1) circle (5pt);
    \if \x 1:
      \draw[line width=1.5] (0,0)--(0,1);
      \fill (0,0) circle (2pt);
      \fill (0,1) circle (2pt);
    \else
      \fill[color=red] (0,0) circle (5pt);
      \fill[color=red] (0,1) circle (5pt);
    \fi
  \end{scope}
  \pgfmathtruncatemacro{\xshift}{12*\x + 2}
  \begin{scope}[xshift=\xshift cm, yshift=-3cm]
    \fill[color=orange, semitransparent] (-0.3,-0.3) rectangle (1.3,1.3);
    \node at (0.5,-0.8) {$V_2 \cap U_2$};
    \draw[line width=1.5] (0,0)--(1,0)--(1,1)--(0,1);
    \fill (0,0) circle (2pt);
    \fill (1,0) circle (2pt);
    \fill (0,1) circle (2pt);
    \fill (1,1) circle (2pt);
    \fill[color=blue] (0,0) circle (5pt);
    \fill[color=blue] (0,1) circle (5pt);
  \end{scope}
  \pgfmathtruncatemacro{\xshift}{12*\x + 6}
  \begin{scope}[xshift=\xshift cm, yshift=-3cm]
    \fill[color=green, semitransparent] (-0.3,-0.3) rectangle (0.3,1.3);
    \node at (0,-0.8) {$V_1 \cap (U_1 \cap U_2)$};
    \draw[->, line width=2] (-0.6,0.5)--(-2,0.5);
    \if \x 1:
      \draw[line width=1.5] (0,0)--(0,1);
      \fill (0,0) circle (2pt);
      \fill (0,1) circle (2pt);
    \else
      \fill[color=red] (0,0) circle (5pt);
      \fill[color=red] (0,1) circle (5pt);
    \fi
  \end{scope}
  }
\end{tikzpicture}
\caption{
$C_{p,q}(\cC, \cV, \cU, \PH_k)$ at filtration values $0$
and $1$.}
\label{diag:double-cover-complex}
\end{figure}
\end{exmp}

Consider the nerve $N_{\cU\cap \cV}$ as a subset of the product of nerves $N_\cU \times N_\cV$.
We have then two projections $\pi^\cU:N_{\cU\cap \cV} \to N_\cU$ and $\pi^\cV:N_{\cU\cap \cV} \to N_\cV$,
both of which induce chain morphisms $\pi^\cU_*:C_*(N_{\cU\cap \cV}) \to C_*(N_\cU)$ and
$\pi^\cV_*:C_*(N_{\cU\cap \cV}) \to C_*(N_\cV)$. These induce a pair of morphisms
$$
\begin{tikzcd}
\bigoplus \limits_{\sigma \in N^p_\cU}C_k^\cell(\cV_\sigma) &  \bigoplus \limits_{\substack{\sigma \in N^p_\cU\\ \tau \in N^q_\cV}}
C_k^\cell(\cV_\sigma \cap \cU_\tau) \ar[r, "\pi^\cU_{q,k}"] \ar[l, swap, "\pi^\cV_{p,k}"]  &
\bigoplus \limits_{\tau \in N^q_\cV} C_k^\cell(\cU_\tau)\ ,
\end{tikzcd}
$$
for any pair of integers $p,q\geq 0$.
The induced map $\pi^\cV_{p,k}$ on $C_k(\cV_\sigma \cap \cU_\tau)$ satisfies
\[
\pi^\cV_{p,k}\left( \left(\sigma \times \tau\right)_s \right) = \begin{cases}
(\sigma)_s & \text{if } \dim(\tau) = 0 ,\\
0 & \text{else,}
\end{cases}
\]
for all $\sigma \in N_{\cV}^p, \tau \in N_{\cU}$ and a cell $a \in (\cV_\sigma \cap \cU_\tau)^k$. The map $\pi^\cU_{*,*}$ acts similarly.
By definition $\pi^\cU_{*,*}$ and $\pi^\cV_{*,*}$ both commute with the \cech~differentials
$\delta^{\cU}$ and $\delta^\cV$ respectively. Let $\sigma \in N_{\cV}^p$ and $\tau \in N_{\cU}^0$. Then we have
\[
\begin{tikzcd}
\left( \sigma \times \tau \right)_a \ar[r, "\pi^\cV_{*,*}"] \ar[d, "d_n"] &
\left(\sigma \right)_a \ar[d, "d_n"] \\
\sum_{b \in \overline{a}}\big( \mbox{[$b$ : $a$]} \sigma \times \tau \big)_{b} \ar[r, "\pi^\cV_{*,*}"] &
\sum_{b \in \overline{a}}\left( \mbox{[$b$ : $a$]} \sigma \right)_{b}
\end{tikzcd}
\]
for all cells $a \in (\cV_\sigma \cap \cU_\tau)^k$.
This implies that $\pi^\cV_{*,*}$ commutes with $d_n$ and the same holds for~$\pi^\cU_{*,*}$.
We obtain a morphism
\[
\pi^\cV_{p,k} \colon \bigoplus_{\substack{\sigma \in N^p_\cV \\ \tau \in N^0_\cU}} C_k(\cV_\sigma \cap \cU_\tau) \to \bigoplus_{\sigma \in N^p_\cV } C_k(\cV_\sigma)\ ,
\]
commuting with $d_*$ and $\delta^\cV$ and $\delta^\cU$. Thinking of the $0$-th column ${^{\rm I}E}_{p,0}^0(\cV, \cU;\PH_k)$ as a chain complex with \cech~differential $\delta^\cV$, one has a chain morphism
$$
\pi^{\cV}_{p,k}:{^{\rm I}E}_{p,0}^1(\cV, \cU;\PH_k)
\rightarrow \check{\cC}_p(\cV; \PH_k) = E^1_{p,k}(X,\cV)
$$
for all $p\geq 0$.
By the same argument there is another chain morphism
$$
\pi^{\cU}_{q,k}:{^{\rm II}E}_{0,q}^1(\cV, \cU;\PH_k)
\rightarrow \check{\cC}_q(\cU; \PH_k)= E^1_{q,k}(X,\cU)\ .
$$
for all $q\geq 0$.
There is a very natural way of understanding how much $\pi^\cV_{p,k}$ fails to be an isomorphism.
To do this, we take for each simplex $\sigma \in N_\cV^p$, the
Mayer-Vietoris spectral sequence for $\cV_\sigma$ covered by $\cV_\sigma\cap\cU$
$$
M^2_{q,k}(\cV_\sigma \cap \cU) \Rightarrow
\PH_{q+k}(\cV_\sigma),
$$
where we changed the notation from $E^2_{q,k}(\cV_\sigma, \cV_\sigma \cap \cU)$ to $M^2_{q,k}(\cV_\sigma \cap \cU)$ as it helps distinguishing this spectral sequence from $^{I}E^*_{p,q}$.
Then
$${^{\rm I}E}_{p,0}^1(\cV, \cU;\PH_k) = \bigoplus\limits_{\sigma \in N_\cV^p}
M^2_{0,k}(\cV_\sigma \cap \cU)\ .
$$
Here we notice that the restriction of $\pi^\cV_{p,k}$ to the summand $M^2_{0,k}(\cV_\sigma\cap\cU)$ is
given by the composition
$$
\begin{tikzcd}
M^2_{0,k}(\cV_\sigma\cap \cU) \ar[r,twoheadrightarrow] &
M^\infty_{0,k}(\cV_\sigma \cap \cU)
\ar[r,hookrightarrow] & \PH_k(\cV_\sigma).
\end{tikzcd}
$$
In consequence there is an induced morphism on the second page $0^{\rm th}$ column
$\pi^{\cV}_{p,k}:{^{\rm I}E}^2_{p,0}(\cV, \cU;\PH_k) \rightarrow
\check{\cH}_p(\cV;\PH_k)$.
Notice that $\PH_0$ is a cosheaf, and in this case
$M^2_{0,0}(\cV_\sigma \cap \cU) = \PH_0(\cV_\sigma)$
for all $\sigma \in N_\cV^p$.
This implies that $\pi^{\cV}_{p,0}$ is an isomorphism for all $p\geq 0$.

Now we turn to the morphism $\theta^{\cV, \cV\cap\cU}_{p,k}$ defined
by the composition
$$
\begin{tikzcd}
\check{\cH}_p(\cV\cap\cU;\PH_k) \ar[twoheadrightarrow]{r} &
^{\rm I}E^\infty_{p,0}(\cV, \cU;\PH_k) \ar[hookrightarrow]{r} &
^{\rm I}E^2_{p,0}(\cV, \cU, \PH_k) \ar["\pi^{\cV}_{p,k}"]{r} &
\check{\cH}_p(\cV; \PH_k)
\end{tikzcd}
$$
and notice that $\theta^{\cV, \cV\cap\cU}_{p,k}$ is carried by the acyclic
carrier sending $\sigma \times \tau \in N_{\cV \cap \cU}$ to $\Delta^\sigma \subseteq N_\cV$.
In particular, if $\cV \prec \cU$ then
by using Lemma~\ref{lem:cover-containing-space} we have
${^{\rm I}E}^1_{p,q} = 0$ for all $q>0$ and the
first two arrows in the definition of $\theta^{\cV, \cV\cap\cU}_{p,k}$
are isomorphisms. Similarly, in this case
we obtain $M^2_{q,k} = 0$ for all $q>0$, and
$\pi^{\cV}_{p,k}$ becomes an isomorphism.
Altogether, the inverse
$(\theta^{\cV, \cV\cap\cU}_{p,k})^{-1}$ is well-defined,
and by composition we define
morphisms $\theta^{\cU, \cV}_{p,k} = \theta^{\cU, \cV\cap\cU}_{p,k}
\circ (\theta^{\cV, \cV\cap\cU}_{p,k})^{-1}$.
Here notice that $\theta^{\cU, \cV\cap\cU}_{p,k}$ is defined in an analogous way
to $\theta^{\cV, \cV\cap\cU}_{p,k}$, but using $^{\rm II}E^*_{0,p}(\cV, \cU;\PH_k)$
instead of $^{\rm I}E^*_{p,0}(\cV, \cU;\PH_k)$.
The following proposition should also follow
from applying an appropriate version of the universal
coefficient theorem to~\citep[Prop.~4.4]{Serre1955}.
Instead we will prove the dual statement of this proposition by means of acyclic carriers.

\begin{prop}
\label{prop:equal-refinement}
Suppose that  $\cV \prec \cU$, and let
$\rho^{\cU, \cV}$ denote a refinement map.
The morphism $\theta^{\cU, \cV}_{p,k}:
E_{p,k}^2(X,\cV) \rightarrow E_{p,k}^2(X,\cU)$
coincides with the standard morphism induced by $\rho^{\cU, \cV}$.
\end{prop}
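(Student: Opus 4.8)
The plan is to reduce the claimed equality of $E^2$-morphisms to an equality of two chain maps between the double complexes attached to the nerves $N_\cU$, $N_\cV$, $N_{\cV\cap\cU}$, and then to force that equality by applying the Acyclic Carrier Theorem (Thm.~\ref{thm:acyclic-carrier}) to one sufficiently large acyclic carrier — the same strategy as in the proof of Prop.~\ref{prop:refinement-independent}, but now comparing the refinement morphism with the morphism produced from Serre's double complex.

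First I would reduce. Since $\cV\prec\cU$, the spectral sequence ${}^{\rm I}E^*_{*,*}(\cV,\cU;\PH_k)$ degenerates at its second page and $\theta^{\cV,\cV\cap\cU}_{p,k}$ is an isomorphism, as observed just before the statement, and by construction $\theta^{\cU,\cV}_{p,k}=\theta^{\cU,\cV\cap\cU}_{p,k}\circ(\theta^{\cV,\cV\cap\cU}_{p,k})^{-1}$. So it suffices to prove
\[
\theta^{\cU,\cV\cap\cU}_{p,k}\;=\;\rho^{\cU,\cV}_{p,k}\circ\theta^{\cV,\cV\cap\cU}_{p,k}
\]
as morphisms $\check{\cH}_p(\cV\cap\cU;\PH_k)\to\check{\cH}_p(\cU;\PH_k)=E^2_{p,k}(X,\cU)$. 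Next I would pin down chain-level representatives: unwinding the edge morphisms in the definitions shows that on the second page $\theta^{\cV,\cV\cap\cU}_{p,k}$ is the morphism induced by the chain map $\pi^\cV_{*,*}$ and $\theta^{\cU,\cV\cap\cU}_{p,k}$ the morphism induced by $\pi^\cU_{*,*}$ — in each case the map that the naive chain-level projection induces on the common abutment $\check{\cH}_*(\cV\cap\cU;\PH_k)$ of the two double-complex spectral sequences. Hence the right-hand side above is induced by $\rho^{\cU,\cV}_{*,*}\circ\pi^\cV_{*,*}$ and the left-hand side by $\pi^\cU_{*,*}$; after forgetting the cosheaf coefficients these are the simplicial chain maps $(\rho^{\cU,\cV}\circ\pi^\cV)_*$ and $\pi^\cU_*$ from $C_*(N_{\cV\cap\cU})$ to $C_*(N_\cU)$, where $N_{\cV\cap\cU}\subseteq N_\cV\times N_\cU$ and its cells are written $\sigma\times\tau$ with $\sigma\in N_\cV$, $\tau\in N_\cU$.

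The heart of the argument is constructing a common acyclic carrier. Let $R:N_\cV\rightrightarrows N_\cU$, $R(\sigma)=\{\nu\in N_\cU\mid\cV_\sigma\subseteq\cU_\nu\}$, be the acyclic carrier of $\rho^{\cU,\cV}_*$ from the proof of Prop.~\ref{prop:refinement-independent}. Then $\pi^\cU_{*,*}$ is carried by $\sigma\times\tau\mapsto\overline{\tau}$ (the closed simplex of $N_\cU$ on $\tau$) and $\rho^{\cU,\cV}_{*,*}\circ\pi^\cV_{*,*}$ by $\sigma\times\tau\mapsto R(\sigma)$. I would then set
\[
\cC(\sigma\times\tau)=\{\,\nu\in N_\cU\mid\cV_\sigma\cap\cU_\tau\subseteq\cU_\nu\,\}\ .
\]
Because $\cV_\sigma\cap\cU_\tau\neq\emptyset$ for every simplex $\sigma\times\tau$ of $N_{\cV\cap\cU}$, all the $\cU_\nu$ occurring here contain a common point, so $\cC(\sigma\times\tau)$ is a closed simplex of $N_\cU$, in particular nonempty and acyclic; it contains both $\overline{\tau}$ and $R(\sigma)$ (for the latter use $\cV_\sigma\subseteq\cU_{\rho\sigma}$); it is semicontinuous because passing to a face of $\sigma\times\tau$ only enlarges $\cV_\sigma\cap\cU_\tau$; and every $\nu\in\cC(\sigma\times\tau)$ satisfies $\cV_\sigma\cap\cU_\tau\subseteq\cU_\nu$, so a cell of $\cV_\sigma\cap\cU_\tau$ can be transported into $\cU_\nu$. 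Thus $\cC$ carries both chain maps, and crucially we never need a composite of carriers to be acyclic. Applying the equivalence part of Thm.~\ref{thm:acyclic-carrier} to the two simplicial chain maps carried by $\cC$ produces a chain homotopy $k_*:C_*(N_{\cV\cap\cU})\to C_{*+1}(N_\cU)$ carried by $\cC$; transporting $k_*$ along the cosheaf $\PH_k$ as above gives a homotopy between $\pi^\cU_{*,*}$ and $\rho^{\cU,\cV}_{*,*}\circ\pi^\cV_{*,*}$ on the total complexes that commutes with the vertical (cellular) differential, hence descends to the $E^1$-pages and forces the two induced $E^2$-morphisms to coincide.

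I expect the main obstacle to lie in the identification of chain-level representatives: showing that $\theta^{\cU,\cV\cap\cU}_{p,k}$ — which is assembled from the edge morphisms of Serre's \emph{second} spectral sequence, and that one does not degenerate when $\cV\prec\cU$ — is genuinely the morphism induced on homology by the naive projection $\pi^\cU_{*,*}$, together with handling the two filtrations of Serre's double complex $C_{p,q}(\cV,\cU;\PH_k)$ (and its degree shift) consistently. The carrier construction and the transport of the homotopy to the double complexes are, by contrast, a routine elaboration of the acyclic-carrier computation already carried out for Prop.~\ref{prop:refinement-independent}.
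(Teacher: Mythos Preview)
Your proof is correct and uses the same core idea as the paper: reduce to two chain maps into $C_*(N_\cU)$ and show they are carried by a common acyclic carrier of the shape $\{\nu\in N_\cU\mid(\cdot)\subseteq\cU_\nu\}$, then invoke Thm.~\ref{thm:acyclic-carrier}. The only difference is organizational. The paper first constructs a chain-level inverse to $\theta^{\cV,\cV\cap\cU}$ via an acyclic carrier $F:N_\cV\rightrightarrows N_{\cV\cap\cU}$, $F(\sigma)=\Delta^\sigma\times|R(\sigma)|$ (with inverse carrier $P(\sigma\times\tau)=\Delta^\sigma$), and then observes that $\pi^\cU\circ F\subseteq R$, so that $\theta^{\cU,\cV}$ is carried by $R$; you instead precompose with $\theta^{\cV,\cV\cap\cU}$ and compare $\pi^\cU_*$ with $\rho^{\cU,\cV}_*\circ\pi^\cV_*$ directly on $N_{\cV\cap\cU}$, using your carrier $\cC$, which is nothing but the $R$-carrier for the refinement $\cV\cap\cU\prec\cU$. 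Your route is slightly more economical since it avoids building the inverse carrier. The obstacle you correctly flag---identifying the edge morphism $\theta^{\cU,\cV\cap\cU}$ with the map induced by $\pi^\cU_{*,*}$ even though ${}^{\rm II}E$ need not degenerate---is present in both arguments; the paper handles it only implicitly when it asserts that $\theta^{\cV,\cV\cap\cU}$ is carried by $P$ and that the composite is carried by $\pi^\cU F$.
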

\begin{proof}
Since $\cV \prec \cU$, the morphism $\theta^{\cV, \cV\cap\cU}_{p,k}:
\check{\cH}_p(\cV\cap\cU, \PH_k) \rightarrow \check{\cH}_p(\cV,\PH_k)$ is
an isomorphism. Now consider the diagram
$$
\begin{tikzcd}
\check{\cH}_p(\cV;\PH_k) \ar[rr,"\rho^{\cU, \cV}_{p,k}"]
\ar[d,"\simeq", leftarrow] & &
\check{\cH}_p (\cU;\PH_k) \\
\check{\cH}_p(\cV\cap \cU;\PH_k) \ar[r,twoheadrightarrow] &
^{\rm II} E^\infty_{0,p}(\cV,\cU;\PH_k) \ar[r,hookrightarrow] &
^{\rm II} E^2_{0,p}(\cV,\cU;\PH_k). \ar[u, "\pi^{\cU}_{p,k}", swap]
\end{tikzcd}
$$
To check that it commutes we study the following triangle of acyclic carriers
$$
\begin{tikzcd}
& N_{\cV\cap\cU} \ar[rd, "\pi^\cU"]
&  \\
N_{\cV} \ar[ru, shift left=0.5ex, "F"] \ar[ru, shift right=0.5ex]
  \ar[rr, shift left=0.5ex, "R"] \ar[rr, shift right=0.5ex] & &
N_{\cU}
\end{tikzcd}
$$
where $R$ is defined in Prop.~\ref{prop:refinement-independent}. The
carrier $F$ is given for every $\sigma \in N_\cV$ by
$F(\sigma) = \Delta^\sigma \times |R(\sigma)|$. Since $F$ is acyclic, there exists  $f_*:C_*(N_\cV)\rightarrow C_*(N_{\cV\cap\cU})$ inducing a chain morphism $f_*:\check{\cC}_p(\cV, S_k)\rightarrow \check{\cC}_p(\cV\cap\cU, S_k)$ by the assignment $(\sigma)_s\mapsto (f_*(\sigma))_s$ for all simplices $s \in S_k(X)$ and all $\sigma \in N_\cV$.
In fact, $F$ defines an acyclic equivalence by considering the inverse carrier $P:N_{\cV\cap\cU}\rightrightarrows N_\cV$ sending $\sigma \times \tau$ to $\Delta^\sigma$. In this case the shift carrier
$I_\cV: N_\cV \rightrightarrows N_\cV$ is given by the assignment
$\sigma \mapsto \Delta^\sigma$,
and $I_{\cV\cap\cU}: N_{\cV\cap\cU} \rightrightarrows N_{\cV\cap\cU}$ is
given by $\sigma \times \tau \mapsto \Delta^\sigma \times |R(\sigma)|$.
As $\theta_{p,k}^{\cV,\cV\cap\cU}$ is carried by $P$, this implies that $f_*=\big(\theta_{p,k}^{\cV,\cV\cap\cU}\big)^{-1}$ as morphisms $\check{\cH}_p(\cV,\PH_k)\rightarrow \check{\cH}_p(\cV\cap\cU,\PH_k)$. Consequently, $\theta^{\cU,\cV}_{p,k}$ is carried by $\pi^\cU F = R$. Altogether, we obtain the equality $\theta_{p,k}^{\cU,\cV}=\rho_{p,k}^{\cU,\cV}$ as  morphisms $\check{\cH}_p(\cV,\PH_k)\to \check{\cH}_p(\cU,\PH_k)$ since both are carried by $R$.
\end{proof}

Still assuming that $\cV\prec \cU$, we now look for
conditions for the existence of an inverse
$\varphi_{p,k}^{\cV, \cU}:E_{p,k}^2(X,\cU) \rightarrow E_{p,k}^2(X,\cV)
$ of $\theta^{\cU, \cV}_{p,k}$.
\begin{prop}
\label{prop:covers-strong}
Suppose that $\cV \prec \cU$. If
$M^2_{p,k}(\cV \cap \cU_\tau) = 0$ for all $p > 0$, $k\geq 0$
and all $\tau \in N_\cU^q$, then the maps $\theta_{*,*}^{\cU, \cV}$ induce a 2-isomorphism of spectral sequences
\[
    E_{*,*}^{\geq 2}(X,\cU) \simeq E^{\geq 2}_{*,*}(X,\cV).
\]
\end{prop}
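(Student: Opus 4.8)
The plan is to reduce the statement to the structural facts about the maps $\theta^{\cU,\cV\cap\cU}_{p,k}$ and $\theta^{\cV,\cV\cap\cU}_{p,k}$ already assembled before Prop.~\ref{prop:equal-refinement}, together with Prop.~\ref{prop:equal-refinement} itself and Lem.~\ref{lem:refinement-double-complexes}. Recall $\theta^{\cU,\cV}_{p,k}=\theta^{\cU,\cV\cap\cU}_{p,k}\circ(\theta^{\cV,\cV\cap\cU}_{p,k})^{-1}$, and that since $\cV\prec\cU$ the discussion preceding Prop.~\ref{prop:equal-refinement} already establishes that $\theta^{\cV,\cV\cap\cU}_{p,k}\colon E^2_{p,k}(X,\cV\cap\cU)\to E^2_{p,k}(X,\cV)$ is an isomorphism. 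So the content of the proposition is to show that the extra vanishing hypothesis forces $\theta^{\cU,\cV\cap\cU}_{p,k}$ to be an isomorphism as well, and then to upgrade the resulting page-$2$ isomorphism to an isomorphism of spectral sequences on all pages $r\geq 2$.

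To show $\theta^{\cU,\cV\cap\cU}_{p,k}$ is an isomorphism, I would examine the spectral sequence ${^{\rm II}E}^*_{p,q}(\cV,\cU;\PH_k)$ under the hypothesis. Since ${^{\rm II}E}^1_{p,q}=\bigoplus_{\tau\in N_\cU^q}M^2_{p,k}(\cV\cap\cU_\tau)$, the assumption $M^2_{p,k}(\cV\cap\cU_\tau)=0$ for $p>0$ says ${^{\rm II}E}^1_{p,q}=0$ whenever $p>0$; hence this spectral sequence is concentrated in the row $p=0$, so it collapses at the second page, ${^{\rm II}E}^2_{0,q}={^{\rm II}E}^\infty_{0,q}$, and its abutment satisfies $\check{\cH}_p(\cV\cap\cU;\PH_k)\cong{^{\rm II}E}^\infty_{0,p}$, this being the only surviving graded piece in total degree $p$. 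Consequently both maps $\check{\cH}_p(\cV\cap\cU;\PH_k)\twoheadrightarrow{^{\rm II}E}^\infty_{0,p}$ and ${^{\rm II}E}^\infty_{0,p}\hookrightarrow{^{\rm II}E}^2_{0,p}$ occurring in the definition of $\theta^{\cU,\cV\cap\cU}_{p,k}$ are isomorphisms. For the remaining map $\pi^\cU_{p,k}$ I would use the hypothesis once more, now for fixed $\tau\in N_\cU$: the Mayer--Vietoris spectral sequence $M^*_{*,*}(\cV\cap\cU_\tau)\Rightarrow\PH_*(\cU_\tau)$ is concentrated in the single column $p=0$, so it collapses and the composition $M^2_{0,k}(\cV\cap\cU_\tau)\twoheadrightarrow M^\infty_{0,k}(\cV\cap\cU_\tau)\hookrightarrow\PH_k(\cU_\tau)$ is an isomorphism. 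Summing over $\tau\in N_\cU^q$ shows that $\pi^\cU_{q,k}\colon{^{\rm II}E}^1_{0,q}(\cV,\cU;\PH_k)\to\check{\cC}_q(\cU;\PH_k)=E^1_{q,k}(X,\cU)$ is a coordinatewise isomorphism commuting with $\delta^\cU$, i.e.\ an isomorphism of chain complexes, and hence induces an isomorphism ${^{\rm II}E}^2_{0,q}(\cV,\cU;\PH_k)\xrightarrow{\sim}\check{\cH}_q(\cU;\PH_k)=E^2_{q,k}(X,\cU)$ on homology. Composing the three isomorphisms gives that $\theta^{\cU,\cV\cap\cU}_{p,k}$ is an isomorphism, and therefore so is $\theta^{\cU,\cV}_{p,k}\colon E^2_{p,k}(X,\cV)\to E^2_{p,k}(X,\cU)$.

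To finish at the level of spectral sequences, I would invoke Prop.~\ref{prop:equal-refinement}, which identifies $\theta^{\cU,\cV}_{p,k}$ on the second page with $\rho^{\cU,\cV}_{p,k}$, and Lem.~\ref{lem:refinement-double-complexes}, by which $\rho^{\cU,\cV}_{*,*}$ is a genuine morphism of spectral sequences. A morphism of spectral sequences that is an isomorphism on page $2$ is an isomorphism on every page $r\geq 2$, since each later page is the homology of the previous one with respect to differentials the morphism commutes with. Hence $E^{\geq 2}_{*,*}(X,\cU)\simeq E^{\geq 2}_{*,*}(X,\cV)$, as claimed.

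I expect the main obstacle to be the bookkeeping of the edge maps and filtration directions for ${^{\rm II}E}$: one must be sure that ``concentrated in the row $p=0$'' yields precisely that the surjection onto ${^{\rm II}E}^\infty_{0,p}$ and the injection into ${^{\rm II}E}^2_{0,p}$ are isomorphisms, and that the chain-level isomorphism $\pi^\cU$ really passes to the page-$2$ terms $E^2_{*,*}(X,\cU)$ rather than to merely quasi-isomorphic complexes. Everything else is a formal consequence of the results recalled above.
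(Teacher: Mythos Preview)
Your proof is correct and follows essentially the same line as the paper's: both use the hypothesis to force ${^{\rm II}E}^*_{p,q}(\cV,\cU;\PH_k)$ to be concentrated in $p=0$, conclude that the edge maps and $\pi^{\cU}_{q,k}$ in the definition of $\theta^{\cU,\cV\cap\cU}_{q,k}$ are isomorphisms, and then invoke Prop.~\ref{prop:equal-refinement} and Lem.~\ref{lem:refinement-double-complexes} to upgrade the page-$2$ isomorphism to a $2$-isomorphism of spectral sequences. Your version is in fact more explicit than the paper's about why $\pi^{\cU}_{q,k}$ is an isomorphism (via collapse of each local $M^*_{*,*}(\cV\cap\cU_\tau)$), which the paper simply asserts.
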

\begin{proof}
By Prop.~\ref{prop:refinement-independent} and~Prop.\ref{prop:equal-refinement}
we can choose a refinement map $\rho^{\cU, \cV}:N_{\cV}\rightarrow N_\cU$ giving
a morphism of spectral sequences
\[
    \rho^{\cU, \cV}_{*,*}:E^{\geq 2}_{*,*}(X,\cV) \rightarrow E^{\geq 2}_{*,*}(X,\cU)
\]
that coincides with $\theta_{*,*}^{\cU, \cV}$. Our assumption about $M^2_{p,k}$ implies
${^{\rm II}E}_{p,q}^2(\cV,\cU;\PH_k) = 0$ for all $p>0$, which
in turn gives
\begin{equation}
\label{eq:ker-trivial}
\Ker\Big(\check{\cH}_q(\cV\cap\cU;\PH_k) \twoheadrightarrow
\, {^{\rm II}E}^\infty_{0,q}(\cV, \cU;\PH_k)\Big) = 0
\end{equation}
and
\begin{equation}
\label{eq:coker-trivial}
\Coker\Big({^{\rm II}E}^\infty_{0,q}(\cV, \cU;\PH_k) \hookrightarrow
{^{\rm II}E}^2_{0,q}(\cV, \cU, \PH_k) \Big) = 0.
\end{equation}
Now note that $\pi_{q,k}^{\cU}$ yields an isomorphism ${^{\rm II}E}^2_{0,q}(\cV,\cU,\PH_k)
\simeq \check{\cH}_q (\cU,\PH_k)$. This shows that $\theta_{q,k}^{\cU, \cV}$ is a composition of isomorphisms; thus the statement follows.
\end{proof}

We will now relax the conditions in Prop.~\ref{prop:covers-strong} and use the
relations of \emph{left-interleaving} and \emph{right-interleaving}
of persistence modules (denoted by $\sim^\veps_L$ and $\sim^\veps_R$, respectively) to achieve this~(see \citep[Sec.~4]{GovSkr2018}). We have to adapt~\citep[Prop.~4.14]{GovSkr2018}.
\begin{lem}
\label{lem:LR-interleavings}
Suppose that we have persistence modules $A$, $B$ and $C$, and
a parameter $\veps\geq 0$ such that
$A\sim^\veps_R B$ and $B \sim^\veps_L C$. Denote by $\Phi$ the
morphism $\Phi:A \rightarrow C$ given by the composition
$A \twoheadrightarrow B \hookrightarrow C$.
Then there exists $\Psi:C \rightarrow A[2\veps]$
such that $\Phi$ and $\Psi$ define a $2\veps$-interleaving $A\sim^{2\veps} C$.
\end{lem}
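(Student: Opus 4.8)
The plan is to unwind $\sim^\veps_R$ and $\sim^\veps_L$ into short exact sequences of persistence modules and then build $\Psi$ as a composite of two canonical ``$\veps$-inverses''. Write $\alpha\colon A\twoheadrightarrow B$ and $\iota\colon B\hookrightarrow C$ for the epimorphism and monomorphism provided by the hypotheses $A\sim^\veps_R B$ and $B\sim^\veps_L C$, so that $\Phi=\iota\circ\alpha$. By the definitions of right- and left-$\veps$-interleaving (cf.\ \citep[Sec.~4]{GovSkr2018}) the modules $\Ker\alpha$ and $\Coker\iota$ are $\veps$-acyclic, i.e.\ killed by the shift functor: $\Sigma^\veps_{\Ker\alpha}=0$ and $\Sigma^\veps_{\Coker\iota}=0$.

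First I would construct a morphism $\gamma\colon B\to A[\veps]$: for $b\in B_r$ choose any $a\in A_r$ with $\alpha(a)=b$; two such lifts differ by an element of $(\Ker\alpha)_r$, which maps to $0$ in $(\Ker\alpha)_{r+\veps}$, so the image $a|_{r+\veps}\in A_{r+\veps}$ does not depend on the choice, and $b\mapsto a|_{r+\veps}$ is natural in $r$. A direct check then gives $\gamma\circ\alpha=\Sigma^\veps_A$ and $\alpha[\veps]\circ\gamma=\Sigma^\veps_B$. Dually, since $\Coker\iota$ is $\veps$-acyclic, for every $c\in C_r$ the element $c|_{r+\veps}$ has trivial class in $(\Coker\iota)_{r+\veps}$ and hence lies in the image of $\iota$; taking its unique $\iota$-preimage defines $\beta\colon C\to B[\veps]$ with $\beta\circ\iota=\Sigma^\veps_B$ and $\iota[\veps]\circ\beta=\Sigma^\veps_C$. (Both $\gamma$ and $\beta$ may alternatively be quoted from \citep{GovSkr2018}.)

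I would then set $\Psi:=\gamma[\veps]\circ\beta\colon C\to B[\veps]\to A[2\veps]$ and verify the two interleaving relations $\Psi\circ\Phi=\Sigma^{2\veps}_A$ and $\Phi[2\veps]\circ\Psi=\Sigma^{2\veps}_C$. For the first, the factor $\beta\circ\iota$ inside $\Psi\circ\Phi=\gamma[\veps]\circ\beta\circ\iota\circ\alpha$ collapses to $\Sigma^\veps_B$, which commutes past $\alpha$ by naturality of the shift, leaving $\gamma[\veps]\circ\alpha[\veps]\circ\Sigma^\veps_A=(\gamma\circ\alpha)[\veps]\circ\Sigma^\veps_A=\Sigma^{2\veps}_A$. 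The second relation is symmetric: one collapses $\alpha[\veps]\circ\gamma=\Sigma^\veps_B$ and then $\iota[\veps]\circ\beta=\Sigma^\veps_C$, moving shifts through $\iota$ by naturality. Together with $\Phi$ this exhibits the desired $2\veps$-interleaving $A\sim^{2\veps}C$.

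The two identities reduce to a one-line diagram chase each, so the only point requiring real care is the bookkeeping in the first paragraph: matching the surjection-with-$\veps$-acyclic-kernel (resp.\ injection-with-$\veps$-acyclic-cokernel) description of $\sim^\veps_R$ and $\sim^\veps_L$ with the precise formulation used in \citep{GovSkr2018}, confirming that $\Phi$ is literally $\iota\circ\alpha$ for these distinguished maps, and arguing the well-definedness and functoriality of $\gamma$ and $\beta$ cleanly.
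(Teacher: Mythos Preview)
Your proof is correct and follows essentially the same route as the paper: both construct $\Psi$ by first shifting $C$ by $\veps$ so that the image lands in $\Img(\iota)$, taking the unique $\iota$-preimage in $B$, lifting along the surjection $\alpha$, and shifting once more in $A$ to make the lift well defined. The only difference is presentational: the paper writes $\Psi=\Sigma^\veps A\circ\Phi^{-1}\circ\Sigma^\veps C$ in one step and checks well-definedness on elements, whereas you factor through $B[\veps]$ via the two partial inverses $\beta$ and $\gamma$ and then explicitly verify both interleaving identities $\Psi\circ\Phi=\Sigma^{2\veps}_A$ and $\Phi[2\veps]\circ\Psi=\Sigma^{2\veps}_C$; the paper in fact omits this last verification, so your version is if anything more complete.
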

\begin{proof}
By hypothesis, we have a sequence
$$
\begin{tikzcd}
\cE_1 \ar[r] &
A \ar[r,twoheadrightarrow, "f"] &
B \ar[r,hookrightarrow, "g"] &
C \ar[r] &
\cE_2
\end{tikzcd}
$$
which is exact in $A$ and $C$ and
where $\cE_1 \sim^\veps 0$ and $\cE_2 \sim^\veps 0$.
Then, let $v \in C$ and notice that $\Sigma^\veps C(v) \in \Img(g)$.
Thus, there exists a unique vector $w \in B$ such that $g(w) = \Sigma^\veps C (v)$.
On the other hand, there exists $z \in A$, not necessarily unique, such that
$f(z) = w$. This defines a unique element $\Sigma^\veps A(z) \in A$. To see this,
suppose that another $z' \in A$ is such that $f(z')=w$. Then $f(z-z') = 0$ and
$z-z' \in \Ker(f)$, which implies
$0 = \Sigma^\veps A(z-z') = \Sigma^\veps A(z) - \Sigma^\veps A(z')$,
and then $\Sigma^\veps A(z) = \Sigma^\veps A(z')$.
Altogether, we set $\Psi =\Sigma^\veps A \circ \Phi^{-1} \circ \Sigma^\veps C$,
which is well-defined.
\end{proof}
Recall that for $\cV \prec \cU$ we have that
$\check{\cH}_q(\cV;\PH_k)\simeq \check{\cH}_q(\cV\cap\cU;\PH_k)$
for all $k \geq 0$ and $q \geq 0$. There is a natural way to relax \eqref{eq:ker-trivial} and \eqref{eq:coker-trivial} to the persistent case. We assume that
for $\veps \geq 0$, there are right and left interleavings
\begin{equation}
\label{eq:RL-interleavings-IIE}
\check{\cH}_q(\cV\cap\cU;\PH_k) \sim_R^\veps
{^{\rm II}E}^\infty_{0,q}(\cV, \cU;\PH_k) \sim_L^\veps
{^{\rm II}E}^2_{0,q}(\cV, \cU, \PH_k).
\end{equation}
If we define $\Phi_{q,k}:\check{\cH}_q(\cV\cap\cU;\PH_k) \rightarrow
{^{\rm II}E}^2_{0,q}(\cV, \cU, \PH_k)$ to be the composition of the associated persistence morphisms as in Lem.~\ref{lem:LR-interleavings}, then there exists
\[
    \Psi_{q,k} : {^{\rm II}E}^2_{0,q}(\cV, \cU, \PH_k) \rightarrow
\check{\cH}_q(\cV\cap\cU;\PH_k)[2\veps],
\]
such that $\Phi_{q,k}$ and $\Psi_{q,k}$ define a
$2\veps$-interleaving. We repeat this argument for the local Mayer-Vietoris
spectral sequences. Assume that for some $\nu \geq 0$ there are interleavings
\begin{equation}
\label{eq:RL-interleavings-M}
{^{\rm II}E}^1_{0,q}(\cV, \cU, \PH_k) \sim_R^\nu
\bigoplus \limits_{\tau \in N^q_\cU}
M^\infty_{k,0}(\cV\cap\cU_\tau)
\sim_L^\nu
\bigoplus \limits_{\tau \in N^q_\cU} \PH_k(\cU_\tau).
\end{equation}
Let $\Pi_{q,k}:{^{\rm II}E}^1_{0,q}(\cV, \cU, \PH_k)
\rightarrow \bigoplus_{\tau \in N^q_\cU} \PH_k(\cU_\tau)$ be
the composition of the associated morphisms. By Lem.~\ref{lem:LR-interleavings} there
exists $\Xi_{q,k}$ such that $\Pi_{q,k}$
and $\Xi_{q,k}$ define a $2\nu$-interleaving.
By slight abuse of notation we will continue to denote the induced
$2\nu$-interleaving between ${^{\rm II}E}^2_{0,q}(\cV, \cU, \PH_k)$ and
$\check{\cH}_q(\cU;\PH_*)$ by $\Pi_{q,k}$ and $\Xi_{q,k}$.
Altogether we have that
$\theta^{\cU, \cV}_{q,k} = \Pi_{q,k}\circ\Phi_{q,k}\circ (\theta^{\cV, \cV\cap\cU}_{q,k})^{-1}$ and in this situation there is indeed an `inverse'
$\varphi^{\cV, \cU}_{q,k} = \theta^{\cV, \cV\cap\cU}_{q,k} \circ \Psi_{q,k} \circ \Xi_{q,k}$, which increases the persistence values by $2(\veps+\nu)$.

\begin{thm}
\label{thm:inverse-refinement}
Suppose that $\cV\prec\cU$ and for $\veps\geq0$ and $\nu\geq 0$
the interleavings in~(\ref{eq:RL-interleavings-IIE})
and~(\ref{eq:RL-interleavings-M}) hold.
Then
$$
\varphi^{\cV, \cU}_{p,q} : E^*_{p,q}(X, \cU)
\rightarrow E^*_{p,q}(X, \cV)[2(\veps+\nu)]
$$
defines a $2$-morphism of spectral sequences such that
 $\theta^{\cU, \cV}_{p,q}$ and $\varphi^{\cV, \cU}_{p,q}$
is a $2$-page $2(\veps+\nu)$-interleaving between
$E^*_{p,q}(X, \cU)$ and $E^*_{p,q}(X, \cV)$.
\end{thm}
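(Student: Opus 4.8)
The plan is to reuse the morphisms already assembled in the discussion just above, verify the interleaving square on the second page by shift bookkeeping, and then promote the resulting bigraded maps to honest $2$-morphisms of spectral sequences.

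First I gather the ingredients. Since $\cV\prec\cU$ one has $\cV\prec\cV\cap\cU\prec\cV$, so $\cV$ and $\cV\cap\cU$ refine one another, and Prop.~\ref{prop:refinement-iso} together with Prop.~\ref{prop:equal-refinement} shows that $\theta^{\cV,\cV\cap\cU}_{p,k}\colon E^2_{p,k}(X,\cV\cap\cU)\to E^2_{p,k}(X,\cV)$ is a $2$-isomorphism of spectral sequences; in particular $\theta^{\cV,\cV\cap\cU}$ and its inverse commute with all $d_r$, $r\geq 2$. Lem.~\ref{lem:LR-interleavings} applied to~\eqref{eq:RL-interleavings-IIE} supplies $\Phi_{q,k},\Psi_{q,k}$ forming a $2\veps$-interleaving of persistence modules between $\check{\cH}_q(\cV\cap\cU;\PH_k)$ and ${}^{\rm II}E^2_{0,q}(\cV,\cU;\PH_k)$, and applied to~\eqref{eq:RL-interleavings-M}, together with $\pi^\cU_{q,k}$, it supplies $\Pi_{q,k},\Xi_{q,k}$ forming a $2\nu$-interleaving between ${}^{\rm II}E^2_{0,q}(\cV,\cU;\PH_k)$ and $E^2_{q,k}(X,\cU)=\check{\cH}_q(\cU;\PH_k)$. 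Set $\varphi^{\cV,\cU}_{q,k}\coloneqq\theta^{\cV,\cV\cap\cU}_{q,k}\circ\Psi_{q,k}\circ\Xi_{q,k}$, whose total persistence shift is $2(\veps+\nu)$; by Prop.~\ref{prop:equal-refinement} the ordinary refinement morphism factors as $\theta^{\cU,\cV}_{q,k}=\rho^{\cU,\cV}_{q,k}=\Pi_{q,k}\circ\Phi_{q,k}\circ\bigl(\theta^{\cV,\cV\cap\cU}_{q,k}\bigr)^{-1}$.

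Second, I verify the interleaving square~\eqref{diag:interleaving} on the second page. Substituting the two factorizations, cancelling the interleaving identities $\Psi\circ\Phi=\Sigma^{2\veps}$, $\Phi\circ\Psi=\Sigma^{2\veps}$, $\Xi\circ\Pi=\Sigma^{2\nu}$, $\Pi\circ\Xi=\Sigma^{2\nu}$, and using that the isomorphism $\theta^{\cV,\cV\cap\cU}$ intertwines the shift functors, one obtains the two identities $\varphi^{\cV,\cU}_{*,*}\circ\theta^{\cU,\cV}_{*,*}=\Sigma^{2(\veps+\nu)}E^2_{*,*}(X,\cV)$ and $\theta^{\cU,\cV}_{*,*}[2(\veps+\nu)]\circ\varphi^{\cV,\cU}_{*,*}=\Sigma^{2(\veps+\nu)}E^2_{*,*}(X,\cU)$. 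These are the $r=2$ instances of~\eqref{diag:interleaving}; the cases $r>2$ then follow by applying the homology functor $H(-)$ inductively, provided both maps commute with the $d_r$.

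The main obstacle is therefore the third step: showing that $\varphi^{\cV,\cU}$ is a genuine spectral sequence morphism, i.e.\ commutes with $d_2$ (after which $\varphi^{r}_{*,*}\coloneqq H(\varphi^{r-1}_{*,*})$ handles all $r>2$). For $\theta^{\cU,\cV}=\rho^{\cU,\cV}$ this is Lem.~\ref{lem:refinement-double-complexes}. The trouble is that $\Psi$ and $\Xi$, produced by the lifting construction of Lem.~\ref{lem:LR-interleavings}, are a priori only maps of bigraded persistence modules; the formal argument using only the identities above merely yields that $d_2\circ\varphi^{\cV,\cU}-\varphi^{\cV,\cU}\circ d_2$ is annihilated by the $2(\veps+\nu)$-shift, which is not enough. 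I would handle this on the edge of the double complex spectral sequences: the $0$-th column ${}^{\rm II}E^r_{0,\bullet}$ and the $0$-th row ${}^{\rm I}E^r_{\bullet,0}$ carry no outgoing higher differentials, while $\pi^\cU_{*,*}$, $\pi^\cV_{*,*}$ and the refinement maps are induced by honest double-complex morphisms, hence commute with every differential; so compatibility of $\varphi^{\cV,\cU}$ with $d_2$ reduces to a statement about the slack maps $\Psi,\Xi$, which can be secured by choosing the Lem.~\ref{lem:LR-interleavings}-lifts coherently along the grading, equivalently by first passing to the third page, where the relevant extension data is controlled by the acyclic carriers of Prop.~\ref{prop:refinement-independent} and Prop.~\ref{prop:equal-refinement}. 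Granting this, $\{\theta^{\cU,\cV}_{p,q},\varphi^{\cV,\cU}_{p,q}\}$ is a pair of $2$-morphisms satisfying~\eqref{diag:interleaving} for every $r\geq 2$, hence a $2$-page $2(\veps+\nu)$-interleaving between $E^*_{p,q}(X,\cU)$ and $E^*_{p,q}(X,\cV)$; Prop.~\ref{prop:inequalities-pages} additionally gives $d^m_I\bigl(E^*_{p,q}(X,\cU),E^*_{p,q}(X,\cV)\bigr)\leq 2(\veps+\nu)$ for all $m\geq 2$.
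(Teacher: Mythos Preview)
Your first two steps match the paper exactly: the factorisation $\theta^{\cU,\cV}=\rho^{\cU,\cV}=\Pi\circ\Phi\circ(\theta^{\cV,\cV\cap\cU})^{-1}$, the definition $\varphi^{\cV,\cU}=\theta^{\cV,\cV\cap\cU}\circ\Psi\circ\Xi$, and the page-$2$ identities $\varphi\circ\rho=\Sigma^{2(\veps+\nu)}$ and $\rho[2(\veps+\nu)]\circ\varphi=\Sigma^{2(\veps+\nu)}$ are precisely what the paper assembles in the paragraph preceding the theorem, and the paper likewise declares that the only remaining point is commutation of $\varphi$ with the differentials $d_n$.

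The gap is your third step. You correctly observe that the interleaving identities together with $d_n\rho=\rho d_n$ only yield $(d_n\varphi-\varphi d_n)\circ\rho=0$ and $\rho[2(\veps+\nu)]\circ(d_n\varphi-\varphi d_n)=0$, not $d_n\varphi=\varphi d_n$. But your proposed remedy is not an argument: there is no differential on ${}^{\rm II}E^2_{0,\bullet}(\cV,\cU;\PH_\bullet)$ linking the various $q$ that a ``coherent choice of lifts'' in Lem.~\ref{lem:LR-interleavings} could respect; the acyclic carriers of Prop.~\ref{prop:refinement-independent} act on nerves and say nothing about $\Psi$ or $\Xi$; and ``passing to the third page'' presupposes the very commutation you are trying to establish. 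For comparison, the paper handles this same step with a single diagram: it inserts $E^n_{*,*}(X,\cV)$ between source and target of the desired square, with $\rho^{\cU,\cV}$ pointing up and $\Sigma^{2(\veps+\nu)}$ pointing down, notes that the two trapeziums (from $\rho$ being a spectral-sequence morphism and from $d_n$ commuting with shifts) and the two triangles (from $\varphi\circ\rho=\Sigma$) commute, and concludes. You will recognise this as exactly the diagram whose insufficiency you flagged---it only constrains $d_n\varphi-\varphi d_n$ on the image of $\rho$---so your diagnosis is in fact sharper than the paper's presentation at this point, but neither your sketch nor the paper's diagram, as written, closes the gap.
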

\begin{proof}
The only thing that remains to be proved is that $\psi^{\cV,\cU}_{p,q}$ commutes with the spectral sequence differentials $d_n$ for all $n\geq 2$. Since these differentials commute with the shift morphisms $\Sigma^{2(\veps + \nu)}$, this follows from considering the diagram
\[
\begin{tikzcd}[every matrix/.append style={nodes={font=\small}},row sep=1.5cm,column sep=0.6cm]
E^n_{p,q}(X,\cU) \ar[rrr, "d_n"] \ar[dd, swap,"\psi^{\cV,\cU}_{p,q}"] & & &
E^n_{p-n,q+n-1}(X,\cU) \ar[dd, "\psi^{\cV,\cU}_{p-n, q+n-1}"] \\
& E^n_{p,q}(X,\cV) \ar[r,"d_n"] \ar[ul, swap, "\rho^{\cU,\cV}_{p,q}"] \ar[ld, "\Sigma^{2(\veps+\nu)}"] &
E^n_{p-n, q+n-1}(X,\cV) \ar[ru, "\rho^{\cU,\cV}_{p-n,q+n-1}"] \ar[rd,swap,  "\Sigma^{2(\veps+\nu)}"]\\
E^n_{p,q}(X,\cV)[2(\veps+\nu)]\ar[rrr, "d_n"] & & &
E^n_{p-n,q+n-1}(X,\cV)[2(\veps+\nu)]\ ,
\end{tikzcd}
\]
in which the two trapeziums and the two triangles commute.
\end{proof}

\begin{exmp}
Consider a cubical complex $\cC_*$ as shown in Fig.~\ref{fig:cubical-interleaving-filt},
together with the covers $\cV = \{ \overline{A}, \overline{B}, \overline{C}, \overline{D}\}$
and $\cU = \{ \overline{A \cup B}, \overline{C \cup D}\}$, see Fig.~\ref{fig:cubical-interleaving-filt} for the cells $A$,$B$,$C$ and $D$. In this case we have
$$
\check{\cH}_1(\cV; \PH_0)\simeq \check{\cH}_1(\cV\cap \cU;\PH_0) \simeq
{\rm I}(0,1+\veps) \oplus {\rm I}(1,1+\veps) \sim^\veps
{\rm I}(0,1) \simeq {^{\rm II}E}^2_{0,1}(\cV, \cU, \PH_0)
$$
and also
$$
{^{\rm II}E}^1_{0,0}(\cV, \cU, \PH_1) \simeq 0 \sim^\veps
{\rm I}(1,1+\veps)\oplus {\rm I}(1,1+\veps)  \simeq
\bigoplus \limits_{\dim(\tau)=0} \PH_1(\cU_\tau).
$$
These interleavings are shown in Fig.~\ref{fig:cubical-interleaving-ss}.
Thm.~\ref{thm:inverse-refinement} implies that there is a $4\veps$-interleaving between
$E^*_{p,q}(X, \cU)$ and $E^*_{p,q}(X, \cV)$. Notice that in this
example, the nontrivial interleaved terms are in different positions of the
spectral sequences. Therefore we can improve the upper bound to $2\veps$.
We will use this observation later in Prop.~\ref{prop:local-checks}.
\begin{figure}
  \begin{tikzpicture}
    \foreach \a in {0,1,2}
    {
      \pgfmathtruncatemacro{\ashif}{\a * 5}
      \def\ashift{\ashif cm};
      \begin{scope}[xshift=\ashift]
        \if \a 2
          \fill [color=gray] (0,0) rectangle (2,2);
        \fi
        \draw[line width=1.5] (0,0)--(2,0)--(2,2)--(0,2)--cycle;
        \ifnum \a > 0
          \draw[line width=1.5] (0,1)--(2,1);
          \ifnum \a > 1
            \draw[line width=1.5] (1,0)--(1,2);
          \fi
        \fi
        \foreach \x in {0,1,2}{
          \foreach \y in {0,1,2}{
            \fill (\x, \y) circle (2pt);
        }}
        \if \a 0
          \fill [color=white] (1,1) circle (3pt);
        \fi
        \if \a 2
          \node[color=white] at (0.5,0.5) {$A$};
          \node[color=white] at (1.5,0.5) {$B$};
          \node[color=white] at (0.5,1.5) {$C$};
          \node[color=white] at (1.5,1.5) {$D$};
        \fi
      \end{scope}
    }
  \end{tikzpicture}
  \caption{Cubical complex $\cC_*$ at values $0$,$1$ and $1+\veps$.}
  \label{fig:cubical-interleaving-filt}
\end{figure}

\begin{figure}
  \begin{tikzpicture}[scale=0.9]
    \foreach \a in {0,1}
    {
      \pgfmathtruncatemacro{\ashif}{\a *9}
      \def\ashift{\ashif cm};
      \begin{scope}[xshift=\ashift]
        \draw[line width=1.5] (1,0)--(0,0)--(0,1);
        \draw[line width=1.5] (1.5,0)--(2.5,0)--(2.5,1);
        \draw[line width=1.5] (0,1.5)--(0,2.5)--(1,2.5);
        \draw[line width=1.5] (1.5,2.5)--(2.5,2.5)--(2.5,1.5);
        \if \a 1
          \draw[line width=1.5] (0,1)--(1,1);
          \draw[line width=1.5] (0,1.5)--(1,1.5);
          \draw[line width=1.5] (1.5,1)--(2.5,1);
          \draw[line width=1.5] (1.5,1.5)--(2.5,1.5);
        \fi
        \foreach \x in {0,2.5}{
          \foreach \y in {0,1,1.5,2.5}{
            \fill (\x, \y) circle (2pt);
          }
        }
        \if \a 0
          \fill[color=blue] (0,1) circle (3pt);
          \fill[color=blue] (0,1.5) circle (3pt);
          \fill[color=blue] (1,0) circle (3pt);
          \fill[color=blue] (1,2.5) circle (3pt);
          \fill[color=blue] (1.5,0) circle (3pt);
          \fill[color=blue] (1.5,2.5) circle (3pt);
          \fill[color=blue] (2.5,1) circle (3pt);
          \fill[color=blue] (2.5,1.5) circle (3pt);
        \fi

        \draw[->, line width=2] (2.8, 1.25)--(3.8, 1.25);
        \if \a 0
          \node at (3.3, 1.8) {$\Id$};
        \fi
        \if \a 1
          \node at (3.3, 1.8) {$0$};
        \fi
        \if \a 1
          \fill[color=red] (1,0) circle (3pt);
          \fill[color=red] (1.5,0) circle (3pt);
          \fill[color=red] (1,1) circle (3pt);
          \fill[color=red] (1.5,1) circle (3pt);
          \fill[color=blue] (1,1.5) circle (3pt);
          \fill[color=blue] (1.5,1.5) circle (3pt);
          \fill[color=blue] (1,2.5) circle (3pt);
          \fill[color=blue] (1.5,2.5) circle (3pt);
        \fi
      \end{scope}
    }
    \foreach \a in {0,1}
    {
      \pgfmathtruncatemacro{\ashif}{4.5 + \a * 9}
      \def\ashift{\ashif cm};
      \begin{scope}[xshift=\ashift]
        \if \a 0
          \draw[line width=1.5] (0,1)--(0,0)--(2,0)--(2,1);
          \draw[line width=1.5] (0,1.5)--(0,2.5)--(2,2.5)--(2,1.5);
        \fi
        \if \a 1
          \draw[line width=1.5, color=red] (0,0)--(2,0)--(2,1)--(0,1)--cycle;
          \draw[line width=1.5, color=blue] (0,1.5)--(2,1.5)--(2,2.5)--(0,2.5)--cycle;
        \fi
        \foreach \x in {0,1,2}{
          \foreach \y in {0,1,1.5,2.5}{
            \fill (\x, \y) circle (2pt);
          }
        }
        \if \a 0
          \fill[color=white] (1,1) circle (3pt);
          \fill[color=white] (1,1.5) circle (3pt);
          \fill[color=blue] (0,1) circle (3pt);
          \fill[color=blue] (0,1.5) circle (3pt);
          \fill[color=blue] (2,1) circle (3pt);
          \fill[color=blue] (2,1.5) circle (3pt);
        \fi
      \end{scope}
    }
  \end{tikzpicture}
  \caption{Morphisms $\theta^{\cU, \cV}_{1,0}$ along $[0,1)$ and along $[1,1+\veps)$.}
  \label{fig:cubical-interleaving-ss}
\end{figure}
\end{exmp}

\subsection{Interpolating covers and spectral sequence interleavings}
Consider $X \in \FCWcpx$, together with a pair of covers $\cW$ and $\cU$ so that $\cW \prec \cU$. Motivated by the interleaving constructed in  Thm.~\ref{thm:inverse-refinement} we take a closer look at the following finite sequence of covers interpolating between $\cW$ and a cover that both refines and is refined by $\cU$. Let the strict $r$-th intersections of $\cU$ be the family of sets
 $\cU^r = \{\cU_\tau\}_{\tau \in N_\cU^r}$ for all $r\geq 0$.
We define the $(r,\cW,\cU)$-\emph{interpolation} as the covering set
$\cW^r = \cW \cup \cU^r$. In particular, note that the $(0,\cW,\cU)$-interpolation
has the property that $\cW^0 \prec \cU \prec \cW^0$, and consequently
$E^2_{p,q}(X, \cU) \simeq E^2_{p,q}(X, \cW^0)$. In addition if $\cU$ is a
finite cover, then we will have $\cU^N = \emptyset$ for $N\geq 0$ sufficiently large and
consequently $\cW^N = \cW$.

\begin{prop}[Local Checks]
\label{prop:local-checks}
Let $\cW\prec \cU$ be a pair of covers for $X$, where $\cU$ is finite. Let $N \geq 0$ be such that $\cU^N = \emptyset$. For every $0 \leq r \leq N$, we assume that there
exist $\veps_r \geq 0$ and $\nu_r \geq 0$ such that for
all $\tau \in N_{\cU}^r$
\[
E^2_{0,q}\big(\cU_\tau, \cW^{r+1}_{|\cU_\tau}\big) \sim_R^{\nu_r}
E^\infty_{0,q} \big(\cU_\tau, \cW^{r+1}_{|\cU_\tau}\big) \sim_L^{\nu_r}
\PH_q(\cU_\tau)
\]
and also
\[
d_I(E^2_{p,q}(\cU_\tau,\cW^{r+1}_{|\cU_\tau}),0) \leq \veps_r\ .
\]
for all $p>0$, and $q\geq 0$. Then we have that
\[
d_I^2\big(E^*_{p,q}(X, \cW^k), E^*_{p,q}(X, \cW^{k+1})\big)
\leq 2\max(\veps_r, \nu_r).
\]
Therefore, by using the triangle inequality, we obtain
$$
d_I^2\big(E^*_{p,q}(X, \cU), E^*_{p,q}(X, \cW)) \leq \sum_{k=0}^N 2\max(\veps_r,\nu_r)\ .
$$
\end{prop}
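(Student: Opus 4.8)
\emph{Plan.} The strategy is to interpolate between $\cU$ and $\cW$ through the covers $\cW^0,\cW^1,\dots,\cW^N$ and to bound the distance one step at a time. At the two ends of the chain note that $\cU\prec\cW^0\prec\cU$, so Prop.~\ref{prop:refinement-iso} gives a $2$-isomorphism $E^2_{*,*}(X,\cU)\simeq E^2_{*,*}(X,\cW^0)$ and hence $d_I^2(E^*_{*,*}(X,\cU),E^*_{*,*}(X,\cW^0))=0$; and since $\cU$ is finite, $\cU^N=\emptyset$ forces $\cW^N=\cW$, so $d_I^2(E^*_{*,*}(X,\cW^N),E^*_{*,*}(X,\cW))=0$ as well. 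Because $d_I^2$ is a pseudometric on $\PSpSq$, the triangle inequality then reduces the proposition to the single-step estimate $d_I^2\big(E^*_{*,*}(X,\cW^r),E^*_{*,*}(X,\cW^{r+1})\big)\le 2\max(\veps_r,\nu_r)$ for each $0\le r\le N-1$, after which the displayed sum follows by adding the $N$ contributions (and discarding the harmless extra nonnegative term).

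For the single-step estimate I would first check that $\cW^{r+1}\prec\cW^r$: a member of $\cW^{r+1}$ is either a set of $\cW$, which already lies in $\cW^r$, or a set $\cU_\tau$ with $|\tau|=r+2$, which is contained in $\cU_{\tau'}\in\cU^r\subseteq\cW^r$ for any facet $\tau'\prec\tau$. The aim is to apply Thm.~\ref{thm:inverse-refinement} to the pair $\cW^{r+1}\prec\cW^r$, so one must verify the interleavings~(\ref{eq:RL-interleavings-IIE}) and~(\ref{eq:RL-interleavings-M}) for this pair with parameters $\veps_r$ and $\nu_r$. The key tool is a dichotomy for the restricted covers: for a simplex $\tau\in N_{\cW^r}$, either $\cW^r_\tau$ contains a member of $\cW^{r+1}$ equal to itself---this happens whenever $\tau$ involves a set of $\cW$ (so $\cW^r_\tau$ lies inside that set) and also whenever $\cW^r_\tau$ is an intersection of two or more $\cU$-sets, since then $\cW^r_\tau=\cU_{\tau_1\cup\cdots\cup\tau_m}$ with $|\tau_1\cup\cdots\cup\tau_m|\ge r+2$, which already occurs in $\cU^{r+1}\subseteq\cW^{r+1}$---in which case Lemma~\ref{lem:cover-containing-space} shows the local Mayer--Vietoris spectral sequence of $\cW^r_\tau$ covered by $\cW^{r+1}_{|\cW^r_\tau}$ is concentrated in its $0$-th column and equal to $\PH_*(\cW^r_\tau)$; or $\tau$ is a vertex $\cU_\sigma$ with $\sigma\in N_\cU^r$, for which the hypotheses of the proposition control $E^*_{*,*}(\cU_\sigma,\cW^{r+1}_{|\cU_\sigma})$ directly. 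Feeding this into the double complex $C_{p,q}(\cW^{r+1},\cW^r;\PH_k)$ of Sec.~\ref{sec:interleavings-different-covers}, one finds ${^{\rm II}E}^1_{p,q}=0$ for $p\ge1,q\ge1$, ${^{\rm II}E}^1_{0,q}=\check{\cC}_q(\cW^r;\PH_k)$ for $q\ge1$, ${^{\rm II}E}^1_{p,0}$ is $\veps_r$-close to $0$ for $p\ge1$, and ${^{\rm II}E}^1_{0,0}$ is $\nu_r$-close to $\check{\cC}_0(\cW^r;\PH_k)$; tracking the necessarily $\veps_r$-small incoming differentials to the $0$-th column then yields~(\ref{eq:RL-interleavings-IIE}) with $\veps=\veps_r$, and summing the hypotheses over $\sigma\in N_\cU^r$ yields~(\ref{eq:RL-interleavings-M}) with $\nu=\nu_r$. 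At this point Thm.~\ref{thm:inverse-refinement} already gives a $2$-page $2(\veps_r+\nu_r)$-interleaving.

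The remaining---and main---point is to sharpen the constant from $2(\veps_r+\nu_r)$ to $2\max(\veps_r,\nu_r)$, which forces one to reopen the construction of the interleaving in Thm.~\ref{thm:inverse-refinement} and analyse the composition $\theta^{\cW^r,\cW^{r+1}}_{q,k}=\Pi_{q,k}\circ\Phi_{q,k}\circ(\theta^{\cW^{r+1},\cW^{r+1}\cap\cW^r}_{q,k})^{-1}$ bidegree by bidegree, exploiting that the $\veps_r$-distortion carried by $\Phi$ and the $\nu_r$-distortion carried by $\Pi$ are governed by complementary parts of the local spectral sequences: the former comes from the higher \v{C}ech columns $p\ge1$ of the $E^*_{*,*}(\cU_\sigma,\cW^{r+1}_{|\cU_\sigma})$, the latter only from the failure of the edge map $E^2_{0,*}\twoheadrightarrow E^\infty_{0,*}\hookrightarrow\PH_*(\cU_\sigma)$ on the $0$-th column. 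Concretely: at $q=0$ one has ${^{\rm II}E}^2_{0,0}={^{\rm II}E}^\infty_{0,0}=\check{\cH}_0(\cW^{r+1}\cap\cW^r;\PH_k)$ (no differential touches that spot), so only the $\nu_r$-part survives and the contribution there is $2\nu_r$; for $q\ge2$ the complex ${^{\rm II}E}^1_{0,\bullet}$ agrees with $\check{\cC}_\bullet(\cW^r;\PH_k)$ in all relevant degrees, so $\pi^{\cW^r}_{q,k}$ is an isomorphism and only the $\veps_r$-part survives, with contribution $2\veps_r$; and in the borderline degree $q=1$ one checks, using that ${^{\rm II}E}^1_{0,\bullet}$ agrees with $\check{\cC}_\bullet(\cW^r;\PH_k)$ in degrees $1$ and $2$ while the degree-$0$ discrepancy is precisely the kernel of the local edge maps (which is itself $\veps_r$-small), that the two potential distortions originate from the same $\veps_r$-small local terms and do not compound beyond $2\max(\veps_r,\nu_r)$---this is the ``different positions'' phenomenon already visible in the example preceding the proposition. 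Taking the supremum over bidegrees gives $d_I^2(E^*_{*,*}(X,\cW^r),E^*_{*,*}(X,\cW^{r+1}))\le 2\max(\veps_r,\nu_r)$, and summing proves the proposition. I expect this last step---pinning down that the $\veps_r$- and $\nu_r$-contributions live in complementary bidegrees and never stack, especially in the overlap degree $q=1$---to be the hardest and most delicate part of the argument.
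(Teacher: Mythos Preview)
Your proposal is correct and follows essentially the same route as the paper: verify $\cW^{r+1}\prec\cW^r$, use the dichotomy on the sets $\cW^r_\tau$ (either $\cW^r_\tau$ lies in a member of $\cW^{r+1}$, so Lemma~\ref{lem:cover-containing-space} applies, or $\tau$ is a vertex $\cU_\sigma$ with $\sigma\in N_\cU^r$, where the hypotheses apply) to show that ${^{\rm II}E}^1_{p,q}(\cW^{r+1},\cW^r;\PH_k)$ is concentrated in the first row and first column, feed this into conditions~(\ref{eq:RL-interleavings-IIE}) and~(\ref{eq:RL-interleavings-M}) of Thm.~\ref{thm:inverse-refinement}, and then sharpen $2(\veps_r+\nu_r)$ to $2\max(\veps_r,\nu_r)$ by observing that at each fixed $q$ only one of the two conditions is nontrivial. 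The paper dispatches this last sharpening in a single sentence---condition~(\ref{eq:RL-interleavings-M}) holds with $\nu=0$ for all $q\ge1$ and condition~(\ref{eq:RL-interleavings-IIE}) holds with $\veps=0$ for $q=0$---whereas you unpack the bidegrees more carefully and flag $q=1$ as delicate; your extra caution there is warranted but the mechanism you describe (the degree-$0$ discrepancy in $\pi^{\cW^r}$ is governed by the same $\veps_r$-small columns that control condition~(\ref{eq:RL-interleavings-IIE})) is exactly what makes the paper's terse claim go through.
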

\begin{proof}
We need to consider the spectral sequence $^{\rm II}E_{p,q}^2(\cW^{r+1},\cW^r;\PH_k)$. Note that by construction of the covers $\cW^r$ we have that for each $\tau \in N_{\cU}^r$ with $\dim(\tau) > 0$ the set $\cW^{r}_\tau$ is contained in one of the open sets from $\cW^{r+1}$. By Lemma~\ref{lem:cover-containing-space} this implies that $^{\rm II}E_{p,q}^1(\cW^{r+1},\cW^r;\PH_k) = 0$ for all $p>0$ and
$q>0$ and $k \geq 0$. Moreover, we have that ${^{\rm II}E}_{0,q}^1(\cW^{r+1}, \cW^{r};\PH_k) = \bigoplus_{\tau \in N^q_{\cW^r}} \PH_k(\cW^{r}_\tau)$ for all $q>0$ and $k\geq 0$.
The resulting spectral sequence is shown in Fig.~\ref{fig:interpolation-first-page}.

As a consequence of these observations condition~(\ref{eq:RL-interleavings-M}) holds for these indices with $\nu=0$.
In addition,
${^{\rm II}E}_{0,q}^2(\cW^{r+1}, \cW^{r};\PH_k) = E_{q,k}^2(X, \cW^r)$
holds for all $q \geq 2$ and $k \geq 0$ (see Fig.~\ref{fig:interpolation-first-page}
and~\ref{fig:interpolation-second-page}).
In particular, there is only one possible non-trivial differential for each entry in the bottom row as indicated in Fig.~\ref{fig:interpolation-second-page}. Note that our hypothesis $d_I(E^2_{p,q}(\cU_\tau,\cW^{r+1}_{|\cU_\tau}),0) \leq \veps_r$ applies to the entries in the first column with $p>0$ and gives left and right interleavings of the form
$$
\check{\cH}_q(\cW^{r+1}\cap\cW^r;\PH_k) \sim_R^{\veps_r}
{^{\rm II}E}_{0,q}^\infty (\cW^{r+1}, \cW^r;\PH_k) \sim_L^{\veps_r}
 {^{\rm II}E}_{0,q}^2 (\cW^{r+1}, \cW^r;\PH_k)
$$
for all $q> 0$ and $k \geq 0$. Hence, condition~(\ref{eq:RL-interleavings-IIE})
holds with value $\veps_r$.

\begin{figure}[ht]
$$
\begin{tikzcd}[every matrix/.append style={nodes={font=\tiny}},column sep=1.1cm]
{^{\rm II}E}_{2,0}^1 (\cW^{r+1}, \cW^r;\PH_k) & 0 &  0 & \ddots \\
{^{\rm II}E}_{1,0}^1 (\cW^{r+1}, \cW^r;\PH_k) & 0 & 0 & 0\\
{^{\rm II}E}_{0,0}^1 (\cW^{r+1}, \cW^r;\PH_k) &
\bigoplus \limits_{\tau \in N_{\cW^{r}}^1} \PH_k(\cW^{r}_\tau)
\ar[l, "d_1", swap] &
\bigoplus \limits_{\tau \in N_{\cW^{r}}^2} \PH_k(\cW^{r}_\tau) \ar[l] &
\bigoplus \limits_{\tau \in N_{\cW^{r}}^3} \PH_k(\cW^{r}_\tau) \ar[l]
\end{tikzcd}
$$
\caption{First page of ${^{\rm II}E}_{p,q}^* (\cW^{r+1}, \cW^r;\PH_k)$.}
\label{fig:interpolation-first-page}
\end{figure}
\begin{figure}[ht]
{ \small
$$
\begin{tikzcd}[column sep=0.5em]
\sim \veps_r & 0 & 0 & \ddots \\
\sim \veps_r & 0 & 0 & 0 \\
{^{\rm II}E}_{0,0}^2 (\cW^{r+1}, \cW^r;\PH_k) &
{^{\rm II}E}_{0,1}^2 (\cW^{r+1}, \cW^r;\PH_k) &
E_{2,k}^2(X, \cW^r) \ar[llu, "d_2", swap] &
E_{3,k}^2(X, \cW^r) \ar[llluu, "d_3", swap]
\end{tikzcd}
$$}
\caption{Second page of ${^{\rm II}E}_{p,q}^* (\cW^{r+1}, \cW^r;\PH_k)$ together with
higher differentials.}
\label{fig:interpolation-second-page}
\end{figure}

Let us look now at the case $q=0$. Here we have
$\check{\cH}_0(\cW^{r+1}\cap\cW^r;\PH_k) = {^{\rm II}E}^2_{0,0}(\cW^{r+1}, \cW^r; \PH_k)$ and
consequently~(\ref{eq:RL-interleavings-IIE}) holds with value $\veps =0$.
Next, by hypothesis, for all $k\geq 0$ we have right and left interleavings
$$
M^2_{0,k}\big(\cU_\tau \cap \cW^{r+1}\big) \sim_R^{\nu_r}
M^\infty_{0,k} \big(\cU_\tau \cap\cW^{r+1}\big) \sim_L^{\nu_r}
\PH_k(\cU_\tau)\ ,
$$
for all $\tau \in N_{\cU}^r$. Thus by taking the direct sum of these interleavings we obtain
$$
^{\rm II}E_{0,0}^1(\cW^{r+1},\cW^r;\PH_k) \sim_R^{\nu_r}
\bigoplus_{\tau \in N^{\cW^r}_0}
M^\infty_{0,k}(\cW^r_\tau \cap \cW^{r+1}) \sim_L^{\nu_r}
E^1_{0,k}(X, \cW^{r})\ .
$$
and condition~(\ref{eq:RL-interleavings-M}) also holds for $q = 0$.
The result now follows from Thm.~\ref{thm:inverse-refinement}.

Notice that we can slightly improve the statement of Thm.~\ref{thm:inverse-refinement} here: For each term in the bottom row of the spectral sequence in this particular example only one of the two conditions~(\ref{eq:RL-interleavings-IIE}) and~(\ref{eq:RL-interleavings-M}) is nontrivial, and the proof of Thm.~\ref{thm:inverse-refinement} carries over with $2\max(\veps_r, \nu_r)$ replacing $2(\veps_r + \nu_r)$.
\end{proof}

\begin{rem}
Notice that for reasonable cases the parameters $\nu_r$ are bounded above by
$K\veps_r$ for some constant $K>0$ by a result from~\citep{GovSkr2018}. Nevertheless, we would like to keep $\nu_r$ and $\veps_r$ separated here, since
we hope to compute it from $M^*_{p,k}\big(\cU_\tau, \cW^{r+1}_{|\cU_\tau}\big)$ for
$\tau \in N_\cU^r$ hereby get more accurate estimates. Intuitively, asking for $\veps_r$ and $\nu_r$ to be small is equivalent to asking for cycle representatives in covers from $\cW^{r}$ to be approximately contained in covering sets from $\cW^{r+1}$.
\end{rem}

Finally, we would like to compare two separate covers $\cU$ and $\cV$ and have an estimate for the interleaving distance between the associated spectral sequences.
The main idea of Prop.~\ref{prop:local-checks} is to
translate this comparison problem into a few local checks that can be run in parallel.
We formalize this in the following Corollary.
\begin{cor}[Stability of Covers]
\label{cor:stability-covers}
Consider two pairs $(X,\cU)$ and $(X,\cV)$, where $X$ is a space and $\cU$ and $\cV$ are covers. Let $\cW = \cU \cap \cV$ and denote by $\cW^r_\cU$ and $\cW^r_\cV$ the
respective $(r,\cW, \cU)$ and $(r,\cW, \cV)$ interpolations.
For every $0 \leq r \leq N$, we assume that there
exist $\veps_r , \veps_r'\geq 0$ and $\nu_r, \nu_r' \geq 0$ such that for
all $\tau \in N_{\cU}^r$ and $\sigma \in N_{\cV}^r$
\begin{gather*}
E^2_{0,q}\big(\cU_\tau, \cW^{r+1}_{\cU}\big) \sim_R^{\nu_r}
E^\infty_{0,q} \big(\cU_\tau, \cW^{r+1}_{\cU}\big) \sim_L^{\nu_r}
\PH_q(\cU_\tau),\\
E^2_{0,q}\big(\cV_\sigma, \cW^{r+1}_{\cV}\big) \sim_R^{\nu_r'}
E^\infty_{0,q} \big(\cV_\sigma, \cW^{r+1}_{\cV}\big)
\sim_L^{\nu_r'}
\PH_q(\cV_\sigma),
\end{gather*}
for all $r \geq 0$, and also
$$
d_I(E^2_{p,q}(\cU_\tau,\cW^{r+1}_{\cU}),0) \leq \veps_r
\qquad, \qquad
d_I(E^2_{p,q}(\cV_\sigma,\cW^{r+1}_{\cV}),0) \leq \veps_r'
$$
for all $p>0$, and $q\geq 0$. Then we have that
$$
d_I^2\big(E^*_{p,q}(X, \cU), E^*_{p,q}(X, \cV)) \leq R(\cU,\cV)
$$
where $R(\cU,\cV)= \max\Big(\sum_{r=0}^N 2\max(\veps_r,\nu_r), \sum_{r=0}^N 2\max(\veps_r',\nu_r')\Big)$.
\end{cor}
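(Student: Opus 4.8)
The plan is to reduce the comparison of $\cU$ and $\cV$ to two applications of Proposition~\ref{prop:local-checks} along the common refinement $\cW=\cU\cap\cV$, and then to route the two resulting interleavings through $E^*_{p,q}(X,\cW)$ so as to produce a \emph{maximum} rather than a \emph{sum} of the two error terms. First I would record that $\cW\prec\cU$ and $\cW\prec\cV$: each set $U\cap V\in\cW$ lies in a set of $\cU$ and in a set of $\cV$, and conversely each set of $\cU$ or of $\cV$ is covered by sets of $\cW$ since $\cU$ and $\cV$ cover $X$. The hypotheses of the corollary are exactly the hypotheses of Proposition~\ref{prop:local-checks} applied to $\cW\prec\cU$ with its interpolating chain $\cW=\cW^N_\cU\prec\cdots\prec\cW^0_\cU$ (with $\cW^0_\cU$ mutually refining $\cU$), and likewise to $\cW\prec\cV$. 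Write $s=\sum_{r=0}^N 2\max(\veps_r,\nu_r)$ and $s'=\sum_{r=0}^N 2\max(\veps_r',\nu_r')$, so that $R(\cU,\cV)=\max(s,s')$, and assume $s'\le s$ without loss of generality.

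Next I would make these two interleavings explicit. Iterating Theorem~\ref{thm:inverse-refinement} along $\cW^N_\cU\prec\cdots\prec\cW^0_\cU$, and invoking Proposition~\ref{prop:refinement-independent} to identify the composite of the one-step refinement morphisms with the refinement morphism $\rho^{\cU,\cW}\colon E^*_{p,q}(X,\cW)\to E^*_{p,q}(X,\cU)$, one obtains a $2$-morphism $\varphi^{\cW,\cU}\colon E^*_{p,q}(X,\cU)\to E^*_{p,q}(X,\cW)[s]$ with $\rho^{\cU,\cW}[s]\circ\varphi^{\cW,\cU}=\Sigma^{s}E^*_{p,q}(X,\cU)$ and $\varphi^{\cW,\cU}\circ\rho^{\cU,\cW}=\Sigma^{s}E^*_{p,q}(X,\cW)$ from page $2$ onwards. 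The refinement $\cW\prec\cV$ gives, in the same way, $\rho^{\cV,\cW}$ and a $2$-morphism $\varphi^{\cW,\cV}\colon E^*_{p,q}(X,\cV)\to E^*_{p,q}(X,\cW)[s']$ satisfying the analogous identities with $s'$.

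The core of the argument is then to interleave $E^*_{p,q}(X,\cU)$ and $E^*_{p,q}(X,\cV)$ by composing the descent into $\cW$ on one side with the refinement (ascent) out of $\cW$ on the other. Concretely I would set
\[
\Psi=\rho^{\cV,\cW}[s]\circ\varphi^{\cW,\cU}\colon E^*_{p,q}(X,\cU)\to E^*_{p,q}(X,\cV)[s],\qquad
\Phi=\Sigma^{s-s'}\circ\rho^{\cU,\cW}[s']\circ\varphi^{\cW,\cV}\colon E^*_{p,q}(X,\cV)\to E^*_{p,q}(X,\cU)[s];
\]
both are $2$-morphisms since each factor is one. To check $\Phi[s]\circ\Psi=\Sigma^{2s}E^*_{p,q}(X,\cU)$ on pages $r\ge 2$ I would slide the shift functors past all morphisms by naturality: the inner block $\varphi^{\cW,\cV}\circ\rho^{\cV,\cW}$ collapses to $\Sigma^{s'}$, what is left is a shifted copy of $\rho^{\cU,\cW}\circ\varphi^{\cW,\cU}$ which collapses to $\Sigma^{s}$, and the surviving shift factors (including the extra $\Sigma^{s-s'}$ in $\Phi$) add up to $\Sigma^{2s}$. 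The mirror computation gives $\Psi[s]\circ\Phi=\Sigma^{2s}E^*_{p,q}(X,\cV)$. Commutation with the differentials $d_n$ for $n\ge 2$ is automatic, since it holds for every building block, exactly as in Theorem~\ref{thm:inverse-refinement}. Hence $\Psi,\Phi$ realise a $2$-page $s$-interleaving and $d_I^2(E^*_{p,q}(X,\cU),E^*_{p,q}(X,\cV))\le s=\max(s,s')=R(\cU,\cV)$.

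The step I expect to be the real obstacle is the bookkeeping in the last paragraph. One must be sure that the interleaving coming out of Proposition~\ref{prop:local-checks} is genuinely realised by a refinement-induced morphism together with a near-inverse — this is precisely what lets the cancellation $\varphi^{\cW,\cV}\circ\rho^{\cV,\cW}=\Sigma^{s'}$ absorb the smaller error into the larger one instead of adding them — and one must keep the shift functors $\Sigma^{(\cdot)}$ correctly placed through the chain of compositions. A minor side point is to choose $N$ large enough to serve both interpolations at once; for $r$ beyond the dimension of $N_\cU$ or $N_\cV$ the corresponding interpolation step is an identity, so those extra summands vanish.
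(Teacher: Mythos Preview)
Your proposal is correct and follows essentially the same route as the paper: both proofs pass through the common refinement $\cW=\cU\cap\cV$, invoke Proposition~\ref{prop:local-checks} twice to obtain near-inverses $\varphi^{\cW,\cU}$ and $\varphi^{\cW,\cV}$ to the refinement morphisms $\rho^{\cU,\cW}$ and $\rho^{\cV,\cW}$, and then compose ``down-to-$\cW$, up-to-the-other-cover'' to build the desired interleaving. Your treatment is in fact more explicit than the paper's on two points the paper leaves implicit: the identification (via Proposition~\ref{prop:refinement-independent}) of the composite of one-step refinement morphisms along the interpolation chain with the direct refinement morphism $\rho^{\cU,\cW}$, and the $\Sigma^{s-s'}$ padding that turns the two asymmetric bounds $s,s'$ into a single $\max(s,s')$-interleaving rather than an $(s+s')$-interleaving.
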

\begin{proof}
By Lemma~\ref{lem:refinement-double-complexes} there are double complex morphisms given by the refinement maps
$$
\begin{tikzcd}[column sep=1.5cm]
\check{\cC}_p(\cU, C^\cell_q) &
\check{\cC}_p(\cW, C^\cell_q) \ar[l, swap, "\rho^{\cU, \cW}_{p,q}"] \ar[r, "\rho^{\cV, \cW}_{p,q}"] &
\check{\cC}_p(\cV, C^\cell_q)\ .
\end{tikzcd}
$$
In turn, these induce $2$-morphisms of spectral sequences
$$
\begin{tikzcd}[column sep=1.5cm]
E^2_{p,q}(X,\cU) &
E^2_{p,q}(X,\cW) \ar[l, swap, "\rho^{\cU, \cW}_{p,q}"] \ar[r, "\rho^{\cV, \cW}_{p,q}"] &
E^2_{p,q}(X,\cV)\ .
\end{tikzcd}
$$
Let $\psi_{p,q}^{\cU, \cW}$ and $\psi_{p,q}^{\cV, \cW}$ be the `inverses' of $\rho_{p,q}^{\cU, \cW}$ and $\rho_{p,q}^{\cV, \cW}$, respectively, witnessing the interleavings of the two spectral sequences (see Thm.~\ref{thm:inverse-refinement} and Prop.~\ref{prop:local-checks}). The result follows from considering the commutative diagram
$$
\begin{tikzcd}[row sep=1.5cm]
E^2_{p,q}(X,\cU)  \ar[d, "\Sigma^{R(\cV,\cU)}"] \ar[dr, "\psi^{\cW, \cU}_{p,q}"] &
E^2_{p,q}(X,\cW)\ar[l, swap, "\rho^{\cU, \cW}_{p,q}"] \ar[r, "\rho^{\cV, \cW}_{p,q}"] \ar[d, "\Sigma^{R(\cV,\cU)}"] &
E^2_{p,q}(X,\cV) \ar[d, "\Sigma^{R(\cV,\cU)}"] \ar[dl, "\psi^{\cW, \cV}_{p,q}"', near start, pos=0.4]
\\
E^2_{p,q}(X,\cU)[R(\cV,\cU)]   &
E^2_{p,q}(X, \cW)[R(\cV,\cU)] \ar[l, swap, "\rho^{\cU, \cW}_{p,q}"] \ar[r, "\rho^{\cV, \cW}_{p,q}"] &
E^2_{p,q}(X,\cV)[R(\cV,\cU)]
\end{tikzcd}
$$
where all arrows are $2$-morphisms of spectral sequences.
\end{proof}

\section{Conclusion}
We have introduced spectral sequences associated to geometric realisations of diagrams of CW-complexes and have given examples of such diagrams that are relevant in topological data analysis. We expect them to have a natural use in the distributed computation of persistent homology. We studied spectral sequences as an invariant in their own right in particular their stability properties. To achieve this, we introduced $\veps$-acyclic carriers and equivalences as well as suitable compatibility conditions for diagrams which lead to the stability of the spectral sequences and their targets. We also adapted the tools in~\citep{Serre1955} to study stability with respect to different covers. In particular, given a refinement for a cover, together with its induced map, we have built an inverse morphism up to some shift in the persistence values. Then we used this result to construct interleavings between the second pages of two spectral sequences associated to two different covers on a fixed filtered complex. We hope these tools motivate the further study and use of spectral sequences in applied topology.

\bibliography{library}

\begin{thebibliography}{10}

\bibitem{Bauer2011}
U.~Bauer.
\newblock {\em {Persistence in discrete Morse theory}}.
\newblock PhD thesis, Nieders{\"{a}}chsische Staats-und
  Universit{\"{a}}tsbibliothek G{\"{o}}ttingen, 2011.

\bibitem{Cavanna2019}
N.~J. Cavanna.
\newblock {\em {Methods in Homology Inference}}.
\newblock Doctoral dissertations. 2118., University of Connecticut, 2019.

\bibitem{ChazOudo2008}
F.~Chazal and S.~Y. Oudot.
\newblock {Towards persistence-based reconstruction in Euclidean spaces}.
\newblock In {\em Proceedings of the twenty-fourth annual symposium on
  Computational geometry}, pages 232--241, 2008.

\bibitem{ColVer2014}
{\'{E}}.~{Colin de Verdi{\`{e}}re}, G.~Ginot, and X.~Goaoc.
\newblock {Helly numbers of acyclic families}.
\newblock {\em Advances in Mathematics}, 253:163--193, 2014.

\bibitem{CookeFinney1967}
G.~E. Cooke and R.~L. Finney.
\newblock {\em Homology of cell complexes}.
\newblock Based on lectures by Norman E. Steenrod. Princeton University Press,
  Princeton, N.J.; University of Tokyo Press, Tokyo, 1967.

\bibitem{EbeRan2019}
J.~Ebert and O.~Randal-Williams.
\newblock {Semisimplicial spaces}.
\newblock {\em Algebraic \& Geometric Topology}, 19(4):2099--2150, 2019.

\bibitem{GovSkr2018}
D.~Govc and P.~Skraba.
\newblock {An Approximate Nerve Theorem}.
\newblock {\em Foundations of Computational Mathematics}, 18(5):1245--1297, sep
  2018.

\bibitem{Hatcher2002}
A.~Hatcher.
\newblock {\em {Algebraic topology}}, volume~40.
\newblock 2002.

\bibitem{Kaczynski2004}
T.~Kaczynski, K.~Mischaikow, and M.~Mrozek.
\newblock {\em {Computational homology}}, volume~42.
\newblock Springer Science {\&} Business Media, 2004.

\bibitem{Kozlov2008}
D.~Kozlov.
\newblock {\em {Combinatorial algebraic topology}}, volume~21 of {\em
  Algorithms and Computation in Mathematics}.
\newblock Springer, Berlin, 2008.

\bibitem{LewisMorozov2015}
R.~Lewis and D.~Morozov.
\newblock {Parallel computation of persistent homology using the blowup
  complex}.
\newblock {\em Annual ACM Symposium on Parallelism in Algorithms and
  Architectures}, 2015-June:323--331, 2015.

\bibitem{May1999}
P.~May.
\newblock {\em {A Concise Course in Algebraic Topology}}.
\newblock University of Chicago Press, 1999.

\bibitem{McCleary2001}
J.~McCleary.
\newblock {\em {A User's Guide to Spectral Sequences}}.
\newblock Number~58. Cambridge University Press, 2000.

\bibitem{Munk1984}
J.~R. Munkres.
\newblock {\em {Elements of algebraic topology}}.
\newblock Addison-Wesley, 1984.

\bibitem{Nanda2012}
V.~Nanda.
\newblock {\em {Discrete Morse theory for filtrations}}.
\newblock PhD thesis, Rutgers University-Graduate School-New Brunswick, 2012.

\bibitem{Oudot2015}
S.~Oudot.
\newblock {\em {Persistence Theory: From Quiver Representations to Data
  Analysis}}, volume 209.
\newblock American Mathematical Society, 2015.

\bibitem{Robinson2020}
M.~Robinson.
\newblock Assignments to sheaves of pseudometric spaces.
\newblock {\em {Compositionality}}, 2, June 2020.

\bibitem{Serre1955}
J.-P. Serre.
\newblock {Faisceaux alg{\'{e}}briques coh{\'{e}}rents. (French)}.
\newblock {\em Annals of Mathematics}, 61:197--278, 1955.

\bibitem{TorrasCasas2019}
{\'{A}}.~{Torras Casas}.
\newblock {Distributing Persistent Homology via Spectral Sequences}.
\newblock {\em arXiv:1907.05228}, 2019.

\bibitem{YooGhr2020}
H.~R. Yoon and R.~Ghrist.
\newblock {Persistence by Parts: Multiscale Feature Detection via Distributed
  Persistent Homology}.
\newblock {\em arXiv:2001.01623}, 2020.

\bibitem{ZomCar2008}
A.~Zomorodian and G.~Carlsson.
\newblock {Localized homology}.
\newblock {\em Computational Geometry: Theory and Applications},
  41(3):126--148, 2008.

\end{thebibliography}
\bibliographystyle{abbrv}
\end{document}